\numberwithin{equation}{subsection}
\newcommand\NN{{\mathbb N}}
\newcommand\FF{{\mathbb F}}
\newcommand\QQ{{\mathbb Q}}
\newcommand\CC{{\mathbb C}}
\newcommand\ZZ{{\mathbb Z}}
\newcommand\AAA{{\mathcal A}}
\newcommand\FFF{{\mathcal F}}
\newcommand\Hilb{{\mathrm{Hilb}}}
\newcommand\Tor{{\mathrm{Tor}}}
\newcommand\Hom{{\mathrm{Hom}}}
\newcommand\hd{{\mathrm{hd}}}
\newcommand\depth{{\mathrm{depth}}}
\newcommand\rank{{\mathrm{rank}}}
\newcommand\Ann{{\mathrm{Ann}}}
\newcommand\Spec{{\mathrm{Spec}}}
\newcommand\mindeg{{\mathrm{mindeg}}}
\newcommand\Res{{\mathrm{Res}}}
\newcommand\Ind{{\mathrm{Ind}}}
\newcommand\Aut{{\mathrm{Aut}}}
\newcommand\mm{{\mathfrak m}}
\newcommand\triv{{\bf 1}}
\newtheorem{numberedproclaim}{Satz}[subsection]
\newtheorem{THM}[numberedproclaim]{Theorem}
\newtheorem{LEMMA}[numberedproclaim]{Lemma}
\newtheorem{PROP}[numberedproclaim]{Proposition}
\newtheorem{CORY}[numberedproclaim]{Corollary}
\let\LaTeXsection\section
\def\section{\setcounter{numberedproclaim}{0} \LaTeXsection}
\let\satzfont\rmfamily
\def\newSATZ#1{%
  \@ifnextchar[{\@osatz{#1}}{\@nsatz{#1}}}
\def\@nsatz#1#2{%
  \@ifnextchar[{\@xnsatz{#1}{#2}}{\@ynsatz{#1}{#2}}}
\def\@xnsatz#1#2[#3]{%
  \expandafter\@ifdefinable\csname #1\endcsname
    {\@definecounter{#1}\@newctr{#1}[#3]%
     \expandafter\xdef\csname the#1\endcsname{%
       \expandafter\noexpand\csname the#3\endcsname \@satzcountersep 
          \@satzcounter{#1}}%
     \global\@namedef{#1}{\@satz{#1}{#2}}%
     \global\@namedef{end#1}{\@endSATZ}}}
\def\@ynsatz#1#2{%
  \expandafter\@ifdefinable\csname #1\endcsname
    {\@definecounter{#1}%
     \expandafter\xdef\csname the#1\endcsname{\@satzcounter{#1}}%
     \global\@namedef{#1}{\@satz{#1}{#2}}%
     \global\@namedef{end#1}{\@endSATZ}}}
\def\@osatz#1[#2]#3{%
  \@ifundefined{c@#2}{\@nocounterr{#2}}%
    {\expandafter\@ifdefinable\csname #1\endcsname
    {\global\@namedef{the#1}{\@nameuse{the#2}}%
  \global\@namedef{#1}{\@satz{#2}{#3}}%
  \global\@namedef{end#1}{\@endSATZ}}}}
\def\@satz#1#2{%
  \refstepcounter{#1}%
  \@ifnextchar[{\@ysatz{#1}{#2}}{\@xsatz{#1}{#2}}}
\def\@xsatz#1#2{%
  \@beginSATZ{#2}{\csname the#1\endcsname}\ignorespaces}
\def\@ysatz#1#2[#3]{%
  \@opargbeginSATZ{#2}{\csname the#1\endcsname}{#3}\ignorespaces}
\def\@satzcounter#1{\noexpand\arabic{#1}}
\def\@satzcountersep{.}
\def\@beginSATZ#1#2{\trivlist 
   \item[\hskip \labelsep{\bfseries #1\ #2}]\satzfont}
\def\@opargbeginSATZ#1#2#3{\trivlist
      \item[\hskip \labelsep{\bfseries #1\ #2\ (#3)}]\satzfont}
\def\@endSATZ{\endtrivlist}
\newtheorem{remark}[numberedproclaim]{Remark}
\def\REM{{\let\endtrivlist\leavevmode\begin{remark}\end{remark}}}
\newbox\shitbox
\long\def \unnumberedproclaim#1#2#3{\setbox\shitbox\hbox{\bf#2}
\vskip6pt
{\noindent\bf#1\ifdim\wd\shitbox>0pt(\unhbox\shitbox)\fi.\ }{\it #3}
\par\vskip6pt}
\def\nonumTHM#1 #2{\unnumberedproclaim{Theorem}{#1}{#2}}
\def\nonumCORY#1 #2{\unnumberedproclaim{Corollary}{#1}{#2}}
\def\nonumPROP#1 #2{\unnumberedproclaim{Proposition}{#1}{#2}}
\def\nonumLEMMA#1 #2{\unnumberedproclaim{Lemma}{#1}{#2}}
\newenvironment{proofof}[1][Missing Reference]{\par
  \normalfont
  \topsep6\p@\@plus6\p@ \trivlist
  \item[\hskip\labelsep\itshape
    \proofname~of #1\@addpunct{.}]\ignorespaces
}{%
  \qed\endtrivlist
}
\newcommand\Groth{{\bf R}}
\renewcommand\Theta{{\mathcal G}}
\def \Spec#1{{\mathrm{Spec}}(#1)}
\let\emphasis\bf
\def\bit{\bfseries\itshape}
\let\xref\LaTeXref
\let\xlabel\label
\let\sec\label
\let\fussnote\footnote
\let\sex\S
\let\iso\cong
\DeclareMathSymbol{\isomorphism}{\mathord}{symbols}{"27}
\def\lra{\mathop{\longrightarrow}}
\let\hra\hookrightarrow
\newcommand\gl{{\mathfrak l}}
\def\itemize{\begin{enumerate}\advance\itemindent by \parindent}
\newcommand\iitem{\renewcommand\labelenumi{\it(\@roman\c@enumi)}\item}
\newcommand\jitem{\renewcommand\labelenumi{(\@roman\c@enumi)}\item}
\def\enditemize{\end{enumerate}}
\def\altenditemize{\end{enumerate}\removelastskip}
\def\[{\relax\ifmmode\badmath\fi$$}
\def\]{\relax\ifmmode$$\else\badmath\fi\relax\ifmmode\badmath\fi\ignorespaces}
\def\badmath{\errmessage{Bad math environment: unbalanced {}s or $ signs parity error}}
\def\tridash{-\mkern-9mu-}
\def\res{\delimiter"526A33C }
\newcommand\tensor{\mathrel{\otimes}}
\renewcommand\mindeg{\mathrm{start}}
\long\def\comment#1{\ifvmode\removelastskip\else\unskip\fi}
\def\plaincases#1{\left\{\,\vcenter{\normalbaselines\m@th
    \ialign{$##\hfil$&\quad##\hfil\crcr#1\crcr}}\right.}
\def\plainmatrix#1{\null\,\vcenter{\normalbaselines\m@th
    \ialign{\hfil$##$\hfil&&\quad\hfil$##$\hfil\crcr
      \mathstrut\crcr\noalign{\kern-\baselineskip}
      #1\crcr\mathstrut\crcr\noalign{\kern-\baselineskip}}}\,}
\def\bmatrix#1{\left[\plainmatrix{#1}\right]}
\def\altplainmatrix#1{\null\,\vcenter{\normalbaselines\m@th
    \ialign{\hfil$##$\hfil&&\hfil$##$\hfil\crcr
      \mathstrut\crcr\noalign{\kern-\baselineskip}
      #1\crcr\mathstrut\crcr\noalign{\kern-\baselineskip}}}\,}
\newskip \gridskip
\def\grid#1{\null\,\vcenter{\normalbaselines\m@th
	\ialign{$##$\hfil&&\hskip\gridskip$##$\hfill\crcr
	\mathstrut\crcr\noalign{\kern-\baselineskip}
	#1\crcr\mathstrut\crcr\noalign{\kern-\baselineskip}}}\,}
\def\commadots{,\dots,}
\def \period{\mkern 1mu.}
\def \comma{\mkern 1.5mu,}
\def \fieldindent{\leavevmode\hbox to 2em{\hss}}
\def \field#1{\leavevmode\hbox to #1em{\hss}}
\def \fieldalign#1{\displ@y \tabskip\z@skip
\halign{\hbox to\displaywidth{$\@lign\fieldindent\displaystyle##\hfil$}\crcr
#1\crcr}}
\let \omega\zeta
\begin{document}

\title{Extending the Coinvariant Theorems of
Chevalley, Shephard--Todd, Mitchell, and Springer}

\author{Abraham Broer}
\address{D\'epartement de math\'emathiques et de statistique\\
Universit\'e de Montr\'eal\\
C.P. 6128, succursale Centre-ville\\
Montr\'eal (Qu\'ebec), Canada H3C 3J7}
\email{broera@DMS.UMontreal.CA}

\author{Victor  Reiner}
\address{School of Mathematics\\
University of Minnesota\\
Minneapolis, MN 55455, USA}
\email{reiner@math.umn.edu}

\author{Larry Smith}
\address{AG-Invariantentheorie\\
Mittelweg 3\\
D 37133 Friedland\\
Federal Republic of Germany}
\email{larry\underbar{\phantom{M}}smith@GMX.net}

\author{Peter Webb}
\address{School of Mathematics\\
University of Minnesota\\
Minneapolis, MN 55455, USA}
\email{webb@math.umn.edu}

\thanks{Work of first author supported by NSERC.
Work of second author supported by NSF grant DMS-0245379. Work of fourth author supported by MSRI}

\date{\today}

\keywords{reflection group, invariant theory, polynomial invariants,
relative invariants, coinvariants, Brauer isomorphism, cyclic sieving phenomenon}

\begin{abstract}

We extend in several directions invariant theory results of 
Chevalley, Shephard and Todd, Mitchell and Springer.  Their results
compare the group algebra for a finite reflection group
with its coinvariant algebra, and compare a group representation
with its module of relative coinvariants. Our extensions apply to
arbitrary finite groups in any characteristic.
\end{abstract}

\maketitle

\tableofcontents
\catcode`\@=11
\def\@oddhead{\normalfont\scriptsize
	\hfil EXTENDING COINVARIANT THEOREMS \hfil\llap{\thepage}}
\catcode`\@=12

Let $k$ be an arbitrary field, $V$ an $n$-dimensional
$k$-vector space, and $G$ a finite subgroup of $GL(V)$\/.
Then $G$ acts by linear change of coordinates on the algebra $k[V]$ of
polynomial functions on $V$\/.
If $x_1 ,\ldots, x_n$ is a basis for the dual vector space
$V^*$ we may identify $k[V]$ with the polynomial algebra $k[x_1 ,\ldots, x_n]$
in $x_1 ,\ldots, x_n$ regarded as formal variables.
The {\emphasis coinvariant algebra} is the quotient algebra
\[
k[V]/(k[V]^G_+) \iso  k[V]\otimes_{k[V]^G} k
\]
where $(k[V]^G_+)$ denotes the ideal of $k[V]$ generated by the elements
of the invariant subalgebra $k[V]^G$ of strictly positive degree,
and $k=k[V]^G/k[V]^G_+$ is regarded as a trivial
$k[V]^G$-module. The coinvariant algebra is a finite dimensional
$G$-representation.
Much of its significance
(see, e.g., \cite{Benson}, \cite{Kane}, \cite{Smith})
derives from the fact that in favorable cases it provides a {\it graded}
version of the regular representation $k(G)$\/,
where $k(G)$ denotes the group algebra, but regarded as a $G$-representation.
This is made precise in the
following result, due to Chevalley (and also Shephard and Todd)
in characteristic zero, and to Mitchell in positive characteristic.

\nonumTHM{Chevalley \cite{Chevalley},
Shephard and Todd \cite{ShephardTodd}, Mitchell \cite{Mitchell}}
{Let $k$ be an arbitrary field, $V$ an $n$-dimensional
$k$-vector space, and $G$ a finite subgroup of $GL(V)$\/.
Suppose that $k[V]^G$ is a polynomial algebra. 
Then as $k(G)$-modules, the coinvariant algebra
$k[V]\otimes_{k[V]^G} k$ and the regular representation $k(G)$ have the same
composition factors counting multiplicities.}

Something similar for {\emphasis relative invariants} may be deduced
under suitable hypotheses as we explain next.
Suppose we have a second group $\Gamma$ and a $(k(\Gamma),k(G))$-bimodule
$U$, i.e., a right $k(G)$-module $U$ which has a commuting left action of
$k(\Gamma)$.
We chose to use the terminology of bimodules here as an aid in keeping
the different actions distinct.
If however no confusion can arise we frequently
identify $(k(\Gamma),k(G))$-bimodules with left
$k(\Gamma \times G)$-modules; the left action of
$(\gamma,g) \in \Gamma \times G$ on an element $u \in U$ being given by
$\gamma ug^{-1}$.
The module $M$ of $U$-{\emphasis relative invariants} is defined by
$M:=(U\otimes_k  k[V] )^G$ where $G$ acts on the
tensor product diagonally, viz., $g(u\otimes x)=ug^{-1}\otimes gx$.
Note that $M$ has the structure of a graded $(k(\Gamma),k[V]^G)$-bimodule,
in which $\Gamma$ acts trivially on $V$ and on $k[V]$.

	\nonumCORY {} {Let $k$ be a field, $V = k^n$ an $n$-dimensional
vector space over $k$\/, $G$ a finite subgroup of $GL(V)$, $\Gamma$ a second
finite group, and $U$ a finite-dimensional $(k(\Gamma),k(G))$-bimodule.
Regard the relative invariants
$
M=(U\otimes_k  k[V] )^G
$
as a $(k(\Gamma),k[V]^G)$-bimodule.
If $|G|$ is invertible in $k$ and $k[V]^G$ is a polynomial algebra,
then one has a $k(\Gamma)$-module isomorphism 
$
M\otimes_{k[V]^G}  k \cong U.
$
}

Note that this corollary includes the nonmodular version of the
Chevalley, Shephard-Todd, Mitchell Theorem
as the special case
where $\Gamma=G$ and $U=k(G)$ as a $(k(G),k(G))$-bimodule. 
More generally, if we let $H$ be any subgroup of $G$ and $\Gamma=N_G(H)$,
the normalizer of $H$ in $G$\/, we obtain a $k$-linear representation
$U = k (H\backslash G)$
from the permutation representation of $G$ on the set $H \backslash G$
of right cosets of $H$ in $G$\/.
Regarded as a $(k(\Gamma),k(G))$-bimodule, the relative invariants
$M=(U\otimes_k  k[V] )^G$ become the subalgebra of
$H$-invariant polynomials $k[V]^H$ regarded as a  $k(N_G(H))$-module
(see \sex \xref{induced-module-section} below).
Certain other cases of relative invariant
modules $M$ appear frequently in the literature, 
such as the $i^{th}$-exterior power
$U=\wedge^i(V^*)$ of $V^*$ 
(resp. $U=V$ itself),
and $M$ is the module of {\it $G$-invariant differential $i$-forms}
(resp. {\it $G$-invariant vector fields}) on $V$,
(see e.g., \cite{So}, \cite[\sex 6.1]{OrlikTerao}),
or, for a simple $k(G)$-module $U$ where the Hilbert series of
$M\otimes_{k[V]^G}  k$ defines the {\it fake degrees} for $U$
(see e.g., \cite[\sex 1.6]{GeckMalle}).
Our first main result, Theorem \xref{ChevalleyOmnibus} below,
extends the Chevalley,
Shephard--Todd, Mitchell result and its corollary
by removing the hypothesis that
$k[V]^G$ be polynomial and $|G|$ lie in $k^\times$\/.

Our other main results are inspired by a generalization of the
Chevalley-Shephard-Todd Theorem due to Springer \cite{Springer}, which incorporates
the action of an extra cyclic group.  We recall this next.
Given a finite subgroup $G \subseteq GL(V)$,
say that $v \in V$ is a {\emphasis regular vector} if 
the orbit $Gv$ is a {\emphasis regular orbit\/},
meaning that the stabilizer in $G$ of $v$ is 1, or equivalently that the orbit
achieves the maximum cardinality $|Gv|=|G|$. An element
$c \in G$ is a {\emphasis regular element} if it has a regular eigenvector 
$v \in V$, after possibly extending the field $k$ to include the corresponding 
eigenvalue $\omega \in k^\times$.  Letting $C=\langle c \rangle$ denote the
cyclic subgroup generated by $c$, the group algebra $k(G)$ becomes a
$(k(G),k(C))$-bimodule
in which $(g,c)$ acts on the basis element $t_h$ of $k(G)$
corresponding to $h$ in $G$ via $(g,c)  \cdot t_h:=t_{ghc}$.
As before we may identify $(k(G),k(C))$-bimodules with $k(G\times C)$-modules
by letting a left action of $c\in C$ correspond to a right action of $c^{-1}$.
We also let $C$ act on $k[V]$ by the
algebra automorphisms which are a scalar multiplication in each degree and
determined by requiring $c^j(x_i)=\omega^jx_i$ for $i=1,2,\ldots,n$.
In this way we obtain the structure of a $(k(G),k(C))$-bimodule on
$k[V]$ as well as $k[V] \otimes_{k[V]^G} k$.
The following theorem was proven by Springer in characteristic zero, 
and extended to arbitrary fields in \cite{RSW}.

	\nonumTHM {Springer \cite{Springer}, Reiner-Stanton-Webb \cite{RSW}}
{Let $k$ be a field, $V$ a finite-dimensional $k$-vector space,
$G$ a finite subgroup $G \subset GL(V)$.
Suppose that $k[V]^G$ is a polynomial algebra, and $c$ is a regular element of
$G$.
Let $C=\langle c \rangle$ as above.  Then one has  the equality
\[
\left[ k[V]\otimes_{k[V]^G} k \right] = [k(G)].
\]
in $\Groth(k(G \times C))$\/,
where $\Groth(k(G \times C))$ denotes the  Grothendieck ring of finite dimensional
$(k(G),k(C))$-bimodules.}

We are concerned with extensions of all these results to
arbitrary groups in any characteristic.
This will require significant reformulation since the naive versions
of these results would not be correct. For example, 
a simple consequence of the Chevalley, Shephard--Todd, Mitchell result is 
that the coinvariant algebra for $G$ has dimension $|G|$ whenever
$k[V]^G$ is a polynomial algebra, but this fails
when $k[V]^G$ is not polynomial (see e.g., \cite{LSsteintwo}).

\bigskip

The authors would like to thank Radha Kessar, Gennady Lyubeznik, Ezra Miller, Tonny Springer and Dennis Stanton
for helpful conversations and stimulating questions.

\section{Statement of Results}
\label{CST-intro-section}

In this section we state our main results and illustrate them with a
simple example.
For background material on invariant theory see
\cite{Benson, DerksenKemper, Smith}, on representation theory
see \cite{Sefive}, particularly Part III, and on reflection groups
see \cite{GeckMalle, Kane}.

\subsection{Chevalley, Shephard--Todd, Mitchell Type Results}
\label{chevie}

We first indicate how to remove the hypothesis that
$k[V]^G$ be polynomial, and $|G|$ lie in $k^\times$, from the
Chevalley, Shephard--Todd, Mitchell Theorem and its corollary.
Our result, Theorem \xref{ChevalleyOmnibus},
compares the ungraded $k(\Gamma)$-module $U$ with various graded
$k(\Gamma)$-modules, showing how, in a sense to be made precise below,
\itemize
\jitem $M\otimes_{k[V]^G}  k$ is an overestimate for $U$, 

\jitem it is the first in a sequence of alternating overestimates and
underestimates, and

\jitem these estimates converge to a suitably defined limit.
\altenditemize
To explain what this means, recall that $k(\Gamma)$ is not in general
semisimple, so
one compensates for this by working
with composition factors. A convenient way to do this
is to introduce the {\emphasis Grothendieck ring}
$\Groth(k(\Gamma))$ of finite-dimensional $k(\Gamma)$-modules. This is defined
to be the ring with one generator $[M]$ for each isomorphism class 
$\lbrace M \rbrace$ of finite dimensional $k(\Gamma)$-modules,
and one relation $[M'] - [M] + [M''] = 0$
for each short exact sequence
\[
0 \lra M' \lra M \lra M'' \lra 0
\period
\]
Addition in $\Groth(k(\Gamma))$ is induced by direct sum and product by tensor
product. There is also a partial ordering on $\Groth(k(\Gamma))$
defined by requiring
for two elements $x, y \in \Groth(k(\Gamma))$ that $x \geq y$ if
$x-y=[M]$ for some (genuine) $k(\Gamma)$-module $M$.  For example, given two
genuine modules $M_1$ and $M_2$, the inequality $[M_1] \geq [M_2]$ means that for
every simple $k(\Gamma)$-module $S$ one has the inequality
$[M_1:S] \geq [M_2:S]$, where $[M : S]$ denotes the multiplicity
of $S$ as a composition factor in $M$\/.

Given a finite group $\Gamma$, a (non-negatively) {\emphasis graded}
$k(\Gamma)$-module is one with a direct sum decomposition
$M=\oplus_{d \geq 0} M_d$
in which each $M_d$ is a finite-dimensional $k(\Gamma)$-module.
Such an $M$ gives rise to an element
$[M](t):=\sum_{d\ge 0} [M_d] t^d$ in the formal power series ring 
over the Grothendieck ring
$
\Groth(k(\Gamma))[[t]]:=\ZZ[[t]]\otimes_\ZZ \Groth(k(\Gamma)).
$
If one forgets the group action we obtain the formal power series
$\sum_{d \ge 0} \dim_k (M_d) t^d \in \ZZ[[t]]$ called the
{\emphasis Hilbert series} 
of $M$ in this manuscript.

The motivating example for us of such a graded $k(\Gamma)$-module will be
\linebreak
$
\oplus_{i \geq 0} \Tor^{R}_i(M,k)
$
in the situation where $R$ is a finitely generated graded, connected,
commutative $k$-algebra with 
a grade-preserving action of $\Gamma$, and $M$ is a finitely generated graded
$R$-module with a compatible $k(\Gamma)$-module structure
(see Section \ref{rationality-section}).
These hypotheses imply that for each $i \geq 0$ the $k$-module
$\Tor^{R}_i(M, k)$ acquires a grading from $R$ and $M$,
and that each graded component $\Tor^{R}_i(M,k)_j$ is finite-dimensional over
$k$. Furthermore, for each fixed $i$ the $(i, j)$-component is non-zero for
only
finitely many $j$ and moreover all such $j$ are greater than or equal to $i$\/.
Likewise, for each fixed $j$ the $(i, j)$-component is non-zero for only
finitely many $i$.
This means for $i = 0 \comma 1 \comma\dots$ that
$\sum_{j \geq 0} (-1)^i [\Tor^{R}_i(M,k)_j]t^j$
is in fact a polynomial in $t$
which is is divisible by $t^i$ and, that the doubly infinite sum
\linebreak
$\sum_{i \geq 0} (-1)^i \sum_{j \geq 0} [\Tor^{R}_i (M, k)_j] t^i$
is a well defined element of $\Groth (k(\Gamma))[[t]]$\/.
This sum, which we often denote by
$\sum_{i \geq 0} (-1)^i [\Tor^{R}_i (M, k)](t)$\/,
generalizes the multiplicity symbol of Serre 
from the case with no group action (see e.g., \cite{Se} and \cite[\sex3]{smoke}).

\begin{THM}
\label{ChevalleyOmnibus}
{} {Let $k$ be a field, $V$ a finite-dimensional $k$-vector space,
$G$ a finite subgroup of $GL(V)$, and $\Gamma$ a finite group.
Let $U$ be a finite-dimensional
$(k(\Gamma),k(G))$-bimodule, and  let $M:= (U\otimes_k k[V] )^G$,
regarded as a $(k(\Gamma),k[V]^G)$-bimodule.  Set $K:=k(V)^G$,
the field of $G$-invariant rational functions on $V$
(i.e., the ungraded field of fractions of $k[V]^G$\/).
\itemize
\iitem In $\Groth(k(\Gamma))$, one has the inequality
\label{omnione}
\[
[ M\otimes_{k[V]^G} k ] \geq [U]. 
\]
Furthermore, one has equality if and only if $M$ is ${k[V]^G}$-free,
in which case $K \otimes_k U$ has a $K(\Gamma)$-module filtration $\{\FFF_j\}$
for which the factor $\FFF_j/\FFF_{j-1}$ is isomorphic to the
$j^{th}$ homogeneous component\linebreak[4]
$(M\otimes_{k[V]^G} k)_j\otimes_k K$.

\iitem More generally,
\label{omnitwo}
for any $m \geq 0$, in $\Groth(k(\Gamma))$, one has the inequality
\[
 \sum_{i=0}^m (-1)^i \sum_{j \geq 0} \left[ \Tor_i^{k[V]^G}(M,k)_j \right]
\plaincases
{
\geq [U] & if $m$ is even,\cr
\leq [U] & if $m$ is odd,\cr
}
\]
with equality if and only if $\Tor_i^{k[V]^G}(M,k)$ vanishes for $i > m$.

\iitem The element
\label{omnithree}
$$
\sum\limits_{i =0}^\infty (-1)^i \sum_{j=0}^\infty
	\left[ \Tor_i^{k[V]^G}(M,k)_j \right] t^i
$$
of $\Groth(k(\Gamma))[[t]]$ 
has the property that, for each simple module $S$, the power series in $t$ giving
the coefficient of $[S]$ lies in $\QQ(t)$.  Furthermore,
$t = 1$ is a regular value for these rational functions, and
the evaluation at $t=1$ is
\[
\sum_{i \geq 0} (-1)^i \sum_{j \geq 0}
\left[ \Tor_i^{k[V]^G}(M,k)_j \right]t^i \Bigg\res_{t=1}
= [U] \quad \text{ in }\Groth(k(\Gamma)).
\]
\altenditemize
}
\end{THM}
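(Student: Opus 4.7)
My plan is to reduce all three parts to a careful analysis of a minimal graded $\Gamma$-equivariant free resolution $F_\bullet \to M$ over $R := k[V]^G$, leveraging a single preliminary identification. First I would show, via the normal basis theorem for the Galois extension $k(V)/K$ with group $G$, that
$$M \otimes_R K \; \iso \; U \otimes_k K \quad \text{as $K(\Gamma)$-modules},$$
so in particular $\rank_R M = \dim_k U$. For each simple $k(\Gamma)$-module $S$, fix a primitive idempotent $e_S \in k(\Gamma)$ with $k(\Gamma) e_S \iso P(S)$, the projective cover of $S$. Because $\Gamma$ acts trivially on $R$, $e_S$ acts $R$-linearly on $M$, so $e_S M$ is a finitely generated graded $R$-summand of $M$ with $\rank_R(e_S M) = d_S \cdot [U:S]$, where $d_S := \dim_k \mathrm{End}_{k(\Gamma)}(S)$; meanwhile $d_S \cdot [N:S] = \dim_k(e_S N)$ for any finite-dimensional $k(\Gamma)$-module $N$.

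For part (i), Nakayama's lemma in its graded connected form gives $\mu_R(e_S M) \geq \rank_R(e_S M)$, with equality iff $e_S M$ is $R$-free. Translating via $\mu_R(e_S M) = \dim_k e_S(M \otimes_R k)$ delivers $[M \otimes_R k : S] \geq [U:S]$ for every simple $S$, which sums to the desired inequality. Global equality forces each $e_S M$ to be $R$-free, and since $\Tor_1^R(e_S M, k) = e_S \Tor_1^R(M, k)$ vanishes for every $S$, we conclude $\Tor_1^R(M,k) = 0$, i.e., $M$ itself is $R$-free. In that case, choose a homogeneous $k(\Gamma)$-equivariant $R$-basis $\{b_\alpha\}$ of $M$ (possible because $\Gamma$ preserves the grading) and set $\FFF_j := \sum_{\deg(b_\alpha) \leq j} K \cdot b_\alpha$ inside $M \otimes_R K \iso U \otimes_k K$; then $\FFF_j / \FFF_{j-1}$ is $(M \otimes_R k)_j \otimes_k K$, as required.

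For part (ii), let $\Omega^m M$ denote the $m$-th syzygy in $F_\bullet$, so that $\Omega^m M \otimes_R k = \Tor_m^R(M,k)$ by minimality. From the exact sequence $0 \to \Omega^m M \to F_{m-1} \to \cdots \to F_0 \to M \to 0$, taking $R$-ranks of $e_S$-parts — using that $F_i = R \otimes_k \Tor_i^R(M,k)$ has $e_S$-rank $\dim_k e_S \Tor_i^R(M,k)$ — yields
$$\rank_R(e_S \Omega^m M) \; = \; (-1)^m \, d_S \!\left([U:S] \; - \; \sum_{i=0}^{m-1}(-1)^i \bigl[\Tor_i^R(M,k) : S\bigr] \right).$$
Applying Nakayama to $\Omega^m M$ then gives $\dim_k e_S \Tor_m^R(M,k) = \mu_R(e_S \Omega^m M) \geq \rank_R(e_S \Omega^m M)$ with equality iff $e_S \Omega^m M$ is free; rearranging produces the alternating inequalities of part (ii). The equality case for every $S$ amounts to $\Omega^m M$ being $R$-free, equivalently $\Tor_i^R(M,k) = 0$ for all $i > m$.

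Finally for part (iii), reading the minimal resolution degree-by-degree in $\Groth(k(\Gamma))[[t]]$ would yield
$$[M](t) \; = \; \Hilb(R;t) \cdot \sum_{i \geq 0}(-1)^i [\Tor_i^R(M,k)](t).$$
Extracting the coefficient of each simple class $[S]$ and dividing by $\Hilb(R;t)$ shows that the sought coefficient of $[S]$ equals $\Hilb(e_S M;t) / \bigl(d_S \cdot \Hilb(R;t)\bigr)$. Both Hilbert series lie in $\QQ(t)$ by Hilbert--Serre applied to the finitely generated graded $R$-modules $e_S M$ and $R$, so the ratio lies in $\QQ(t)$; at $t = 1$ the pole of $\Hilb(R;t)$ of order $\dim R$ cancels the pole of $\Hilb(e_S M;t)$, and the standard identity $\lim_{t\to 1}\Hilb(e_S M;t)/\Hilb(R;t) = \rank_R(e_S M)$ combined with the first step yields the value $[U:S]$. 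I expect the main obstacle to be the preliminary $K(\Gamma)$-isomorphism $M \otimes_R K \iso U \otimes_k K$ — once the rank of each $e_S$-part of $M$ is identified as $d_S[U:S]$, all three parts collapse into Nakayama's lemma and Hilbert--Serre rationality.
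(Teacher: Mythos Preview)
Your idempotent approach is a genuine alternative to the paper's route.  The paper first proves an abstract version (its Theorem~2.5.2) and then specializes via the Normal Basis Theorem to get $K\otimes_R M\cong K\otimes_k U$, exactly as you do.  But where you slice $M$ by primitive idempotents $e_S\in k(\Gamma)$ and run Nakayama/rank arguments on the $R$-modules $e_SM$, the paper instead invokes Brauer theory to replace $\Gamma$ by the cyclic subgroup generated by a $p$-regular element, thereby reducing parts (ii) and (iii) to the semisimple case, where a $\Gamma$-equivariant \emph{minimal} resolution is available.  Your decomposition is more direct and avoids characters entirely; the paper's reduction buys access to an honest equivariant minimal resolution.

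There is, however, a real gap in your write-up: you assume a ``minimal graded $\Gamma$-equivariant free resolution $F_\bullet\to M$'' exists.  When $|\Gamma|$ is not invertible in $k$ this generally fails---building one requires a $k(\Gamma)$-equivariant splitting of $M\twoheadrightarrow M/R_+M$, which need not exist (the paper's Proposition~2.3.1 only produces an equivariant resolution that is minimal under the extra hypothesis that $k(\Gamma)$ is semisimple).  The same issue recurs in your part (i), where you speak of a ``homogeneous $k(\Gamma)$-equivariant $R$-basis'' of $M$.

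The repair is already implicit in your own setup: do not resolve $M$ equivariantly at all.  Since $\Gamma$ acts trivially on $R$, each $e_S$ is an $R$-linear idempotent and $e_SM$ is a finitely generated graded $R$-summand of $M$; take an ordinary minimal graded $R$-free resolution $F_\bullet^{(S)}\to e_SM$ and let $\Omega^m_{(S)}$ be its $m$-th syzygy.  Functoriality of $\Tor$ in the first variable (applied to the idempotent $e_S:M\to M$) gives $\Tor_i^R(e_SM,k)\cong e_S\Tor_i^R(M,k)$, so $\dim_k\Tor_i^R(e_SM,k)=d_S\,[\Tor_i^R(M,k):S]$, and your rank/Nakayama computation for part (ii) goes through verbatim with $\Omega^m_{(S)}$ in place of $e_S\Omega^mM$.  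For the filtration in part (i), drop the word ``equivariant'': any homogeneous $R$-basis $\{b_\alpha\}$ of $M$ gives a degree filtration $\FFF_j=\sum_{\deg b_\alpha\le j}K\cdot(1\otimes b_\alpha)$ of $K\otimes_RM$ that is automatically $\Gamma$-stable because $\Gamma$ preserves degrees, and the induced maps $(M\otimes_Rk)_j\otimes_kK\to\FFF_j/\FFF_{j-1}$ are $\Gamma$-equivariant for the same reason.  For part (iii), your Hilbert--Serre argument is fine once you read the identity $[M](t)=\Hilb(R;t)\cdot\sum_i(-1)^i[\Tor_i^R(M,k)](t)$ one simple at a time, i.e.\ as $\Hilb(e_SM;t)=\Hilb(R;t)\cdot\sum_i(-1)^i\Hilb(\Tor_i^R(e_SM,k);t)$, which again only needs the minimal resolution of $e_SM$.
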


Theorem \xref{ChevalleyOmnibus}
is proven in Section \xref{integral-domain-section}, using a 
homological strengthening of Chevalley's method from \cite{Chevalley},
relying ultimately on the Normal Basis Theorem from Galois theory.
We illustrate this with a simple example.

\begin{EXAMPLE}
\xlabel{simpleexa}
{} Let $G$ be the cyclic group $\ZZ/2$ of order $2$
regarded as the subgroup of $GL(2,\CC)$ generated by the
scalar matrix $g=-I_{2 \times 2}$ which is the negative of the identity.
Note that the ring of invariants is not a polynomial algebra, rather one has
$$
R:=\CC[x, y]^{\ZZ/2}=\CC[x^2,xy,y^2].
$$
There are two simple $\CC(\ZZ/2)$-modules, $U_+$
and $U_-$, both $1$-dimensional, with $g$ acting
by the scalar $+1,-1$ on $U_+,U_-$, respectively.  

Choose $\Gamma$ to be the trivial group $\lbrace 1 \rbrace$ and regard it as
a subgroup of $\Aut_{\CC(\ZZ/2)}(U_{\pm})$.
The Grothendieck ring 
$\Groth(\CC(\Gamma))$ is isomorphic to $\ZZ$, with the isomorphism sending
the class $[\triv]$ of the trivial $1$-dimensional $\CC(\Gamma)$-module
to the integer $1$.  Any $\CC$-vector space of dimension $d$ then represents the 
element $[\CC^d]=d[\triv]$ in $\Groth(\CC(\Gamma))$.
In particular, $[U_+]=[U_-]=[\triv]$ in  $\Groth(\CC(\Gamma))$.

One can easily check that
\[\fieldalign
{
M_+:=(U_+ \otimes_\CC \CC[x, y] )^{\ZZ/2} = \CC[x, y]^{\ZZ/2} = R\cr
M_-:=(U_- \otimes_\CC \CC[x, y])^G
= Rx + Ry.\cr
}
\]
So $M_+$ is
a free $R$-module of rank $1$, and all inequalities asserted
in
Theorem \xref{ChevalleyOmnibus} become trivial equalities.  
By contrast, $M_-$ has an interesting, infinite, $2$-periodic\fussnote{In fact,
whenever
$R$ is a {\it hypersurface algebra}, i.e.,
\[
R =k[V]^G \iso k[f_1,\ldots,f_n,f_{n+1}]/(h)
\]
for a single homogeneous relation $h$ among the
$f_i$'s, 
there will always be such an $R$-free resolution of
$M$ which is eventually $2$-periodic
(see e.g., \cite[\sex 6]{tate} or \cite{EisenbudCI}).}
$R$-free resolution
\[
\cdots \lra^{d_4} R(-7)^2 \lra^{d_3} R(-5)^2 
 	\lra^{d_2} R(-3)^2 \lra^{d_1} R(-1)^2
	\lra^{d_0} M_-  \rightarrow 0
\comma
\]
in which $R(-d)$ denotes a free $R$-module of rank $1$
having a basis element of degree $d$.  Here the differential
$d_0$ maps the two basis elements of $R(-1)^2$ onto $x, y$
in $M_-$, while the differentials $d_i$ for $i \geq 1$ can be chosen as
follows:
\[
d_i =
\bmatrix
{
x^2 & xy \cr
xy & y^2 
}
\text{ for even }i, \qquad 
d_i =
\bmatrix
{
y^2 & -xy \cr
-xy & x^2 
}
\text{ for odd }i.
\]
For any $m \geq 0$, this gives the following strict inequalities in
$\Groth(\CC(\Gamma)) \cong \ZZ$ 
\[\gridskip=0pt
\left[ \sum_{i=0}^m	
(-1)^i \sum_{j \geq 0} \Tor^R_i(M_-,\CC)_j \right] = 
\grid
{
[\CC^2] &= 2[\triv] & > [\triv]  &= [U_-] &~\mbox{if}~m~\mbox{is even,}\cr
[0] &= 0[\triv] & < [\triv]  &= [U_-] &~\mbox{if}~m~\mbox{is odd,}\cr
}
\]
as predicted by Theorem \xref{ChevalleyOmnibus}
(\romannumeral\xref{omnione}, \romannumeral\xref{omnitwo}).
In the limit as $m \rightarrow \infty$, one makes sense of this by
noting that
$
\Tor^{R}_i(M_-, \CC)_j
$ 
vanishes unless the internal degree $j$ and homological
degree $i$ satisfy $j=2i+1$.  Hence one can calculate 
in $\Groth(\CC(\Gamma))[[t]] \cong \ZZ[[t]]$ that 
\[
\begin{aligned}
\sum_{i \geq 0} (-1)^i \,\, \left[\Tor^{R}_i(M_-,\CC) \right](t)
	&= 2[\triv] t^1 - 2[\triv] t^3 + 2[\triv] t^5 - 2[\triv] t^7 +  \cdots\cr
&= \frac{2t}{1+t^2} [\triv].
\end{aligned}
\]
The coefficient of $[\triv] \in \Groth(\CC(\Gamma))$
is a rational function of $t$ with $t = 1$ as a regular value, and
upon substituting $t=1$, one obtains in $\Groth(\CC(\Gamma))$
\[
\sum_{i \geq 0} (-1)^i \left[\Tor^{R}_i(M_-,\CC) \right] (t)
\Bigg\res_{t=1}
= 
\frac{2t}{1+t^2} [\triv]\Bigg\res_{t=1} = \frac{2}{2} [\triv]
= [\triv] = [U_-]
\]
as predicted by  Theorem \xref{ChevalleyOmnibus}(\romannumeral3).
\end{EXAMPLE}

\subsection{Springer-Type Results}
\sec{SpringerIntroSection}

Let $k$ be a field, $V$ a finite-dimensional $k$-vector space,
$G$ a finite subgroup of $GL(V)$.
Suppose that $k[V]^G$ is a polynomial algebra, and $c$ is a regular element of
$G$.
As with the Chevalley, Shephard-Todd, Mitchell Theorem, 
we wish to deduce a more general version of Springer's Theorem
that applies to an arbitrary $(k(\Gamma),k(G))$-bimodule $U$ for
any finite group $\Gamma$. 
The  $k(\Gamma)$-module of relative invariants
$M:=(U\otimes_k k[V] )^G$ carries a commuting $C$-action in which $C$
acts on $k[V]$ by scalars as before,
and does nothing to the factor of $U$ in $U \otimes_k k[V]$. In this way
$M$ becomes a graded $(k(\Gamma), k(C))$-bimodule. It is also possible to
view $U$ itself as a $(k(\Gamma), k(C))$-bimodule in a different way,
namely with the action of $C$ coming as the restriction
of the action of $G$, of which $C$ is a subgroup. From the 
theorem of Springer,
it is not hard to deduce that if $k[V]^G$ is a polynomial algebra
and both $|G|, |\Gamma|$
lie in $k^\times$, then as $(k(\Gamma), k(C))$-bimodules,
\[
M\otimes_{k[V]^G} k \iso  U.
\]
Our next main result shows how to remove the hypothesis
that $|G|, |\Gamma|$ lie in $k^\times$.

	\begin{THM} 
\label{PolynomialInvariantsTheorem}
{} {Let $k$ be any field, $V$
a finite-dimensional $k$-vector space of $k$ and $G$ a finite subgroup
of $GL(V)$ with $k[V]^G$ a polynomial algebra.
Let $C=\langle c \rangle$ be the
cyclic subgroup generated by a regular element $c$ in $G$ with regular
eigenvalue $\omega$ and
$U$ a finite-dimensional $(k(\Gamma), k(G))$-bimodule for some finite group
$\Gamma$. 
Regard  $M:=(U \otimes_k k[V])^G$ as a $k[V]^G$-module 
and as a $(k(\Gamma), k(C))$-bimodule,
where $C$ acts on $k[V]$ via scalar multiplication determined by
$c^j(x_i)=\omega^jx_i$ for $i=1,2,\ldots,n$ and $x_1 \commadots x_n$ is a basis
for $V^*$\/.

Then 
\[
\sum_{i \geq 0} (-1)^i \sum_{j \geq 0}
	\left[\Tor_i^{k[V]^G}(M,k)_j \right] = [U].
\]
in $\Groth(k(\Gamma \times C))$; the sum being finite since $k[V]^G$ is a
polynomial algebra.
}
\end{THM}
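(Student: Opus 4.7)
The plan is to strengthen the argument behind Theorem~\xref{ChevalleyOmnibus}(\romannumeral\xref{omnithree}) $C$-equivariantly, exploiting the polynomial hypothesis on $R:=k[V]^G$ together with a refinement of the Normal Basis Theorem for the Galois extension $k(V)/K$ (where $K:=k(V)^G$) that uses the regularity of $c$.

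First I would record the structural consequences of the hypotheses. Since $c$ has a regular eigenvector $v$, the stabilizer of $v$ in $C$ is trivial, so the regular eigenvalue $\omega$ has order exactly $|C|$; in particular $|C|$ is coprime to $\mathrm{char}\,k$ and $k(C)$ is semisimple. Because $R$ is polynomial of global dimension $n=\dim V$, the alternating sum in the statement is finite. Moreover, $k[V]$ is $R$-free, and a standard Cohen--Macaulay/Auslander--Buchsbaum argument over the regular graded ring $R$ shows that $M=(U\otimes_k k[V])^G$ is itself a free $R$-module of finite rank. Hence $\Tor^R_i(M,k)=0$ for $i\ge 1$ and the theorem collapses to the single identity $[M\otimes_R k]=[U]$ in $\Groth(k(\Gamma\times C))$.

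I would then pass to the generic fibre. Set $K:=\Frac(R)=k(V)^G$. Freeness of $M$ yields $K\otimes_R M\cong K\otimes_k(M\otimes_R k)$ as $(K,k(\Gamma\times C))$-modules, while tautologically $K\otimes_R M=(U\otimes_k k(V))^G$. By a Noether--Deuring-type descent argument for the scalar extension $k\subseteq K$, it suffices to produce a $(K(\Gamma),k(C))$-bimodule isomorphism
\[
(U\otimes_k k(V))^G\;\cong\;K\otimes_k U,
\]
in which on the right $C$ acts on $U$ via $C\subseteq G$ and on $K$ via the scalar $C$-action $\sigma_c$ (which preserves $K$).

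The key technical step is a $C$-equivariant Normal Basis Theorem: I claim that, using the regularity of $c$, one can produce $\theta\in k(V)$ such that $\{g\theta:g\in G\}$ is a $K$-basis of $k(V)$ and, in addition, $\sigma_c(\theta)=c^{-1}\theta$; equivalently, $\theta$ lies in the fixed field $L$ of the finite-order automorphism $c\sigma_c$ of $k(V)$. The field $L$ is nontrivial because of regularity --- for instance the dual $v^*\in V^*$ of the regular eigenvector $v$ lies in $L$: the $G$-action gives $\rho_c(v^*)=\omega^{-1}v^*$ while $\sigma_c(v^*)=\omega v^*$, so $c\sigma_c(v^*)=v^*$ --- and a Zariski-density argument within $L$ (whose transcendence degree over $k$ is $n$) yields a $\theta$ which additionally generates a normal basis. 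Given such $\theta$, the isomorphism $k(V)\cong K[G]$ via $g\theta\mapsto t_g$ carries $\sigma_c$ to right multiplication by $t_{c^{-1}}$ on $K[G]$. Under the paper's convention identifying a left $C$-action with right multiplication by $c^{-1}$, this matches the left $C$-action on the bimodule $k(G)$ coming from $C\subseteq G$. The explicit formula
\[
\Phi\colon U\otimes_k K\longrightarrow(U\otimes_k k(V))^G,\qquad u\otimes k\longmapsto k\cdot\sum_{g\in G}ug^{-1}\otimes g\theta,
\]
is then readily verified to be a $(K(\Gamma),k(C))$-bimodule isomorphism, completing the proof. The existence of the $C$-equivariant normal basis element $\theta$ --- where the regularity of $c$ enters essentially --- is the main obstacle; the remaining steps are routine Grothendieck-group and Koszul-resolution manipulations after the preparation above.
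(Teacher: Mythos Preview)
Your argument has a genuine gap: the claim that $M=(U\otimes_k k[V])^G$ is automatically free over $R=k[V]^G$ is not justified by the Cohen--Macaulay/Auslander--Buchsbaum reasoning you invoke, and in fact fails in the modular case. That reasoning would require $M$ to be maximal Cohen--Macaulay over $R$, but taking $G$-invariants does not preserve depth when $|G|\notin k^\times$: the functor $(-)^G$ is then not exact, and $M$ is merely a submodule---not a direct summand---of the free $R$-module $U\otimes_k k[V]$. By Frobenius reciprocity (\S\ref{induced-module-section}) the choice $U=\Ind_H^G k$ gives $M\cong k[V]^H$, and there are well-known subgroups $H\subset G$ with $k[V]^G$ polynomial but $k[V]^H$ not Cohen--Macaulay; for instance $k=\FF_p$ with $p\ge 5$, $V=k^p$, $G=S_p$ permuting coordinates, and $H=\langle(1\,2\,\cdots\,p)\rangle$, where Ellingsrud--Skjelbred's depth formula gives $\depth k[V]^H=3<p$. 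Once freeness of $M$ fails, your identification $K\otimes_R M\cong K\otimes_k(M\otimes_R k)$ is simply false, and nothing in the remainder of the argument recovers the higher $\Tor$-terms in the alternating sum.

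The paper sidesteps this obstacle by replacing your passage to the \emph{generic} fibre with a comparison of two \emph{closed} fibres of $\phi\colon V\to\Spec R$: the one over the origin (which computes $\Tor_*^R(M,k)$) and the one over $\phi(v)$ for the regular eigenvector $v$. The two ingredients are (a)~a filtration argument (Proposition~\ref{finite-hd-proposition}) showing that the alternating sum $\sum_i(-1)^i[\Tor^R_i(M,-)]$ is the same over any closed point of $\Spec R$, and (b)~a local computation (Lemma~\ref{regular-G-orbits}) showing that after $\mm_{\phi(v)}$-adic completion one \emph{does} have $\hat M$ free over $\hat R$---precisely because $G$ acts freely on the fibre $\Phi_v=Gv$, which forces $\widehat{k[V]}\cong\widehat{k[V]^G}(G)$ there. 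This local freeness at a regular point also makes the $C$-equivariance transparent (Lemma~\ref{add-in-cyclic-action}): the scalar $C$-action on $A(\Phi_v)\cong k(G)$ becomes right translation by $c^{-1}$, so no $C$-equivariant normal basis theorem for $k(V)/K$ is ever needed.
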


Proving Theorem \xref{PolynomialInvariantsTheorem} (in particular, with
{\bit no} hypothesis on the field $k$) was one of our
original motivations. It easily implies our main application,
Corollary \xref{CyclicSievingCorollary} below,
which resolves in the affirmative both Conjecture 3 and Question
4 in \cite{RSW}.

	\begin{CORY} 
\label{CyclicSievingCorollary}
{} {Let $k$ be
a field, $V$ a finite-dimensional $k$-vector space over $k$
and $H \subset G$ two nested finite subgroups of $GL(V)$.
Assume $k[V]^G$ is a polynomial algebra, and let
$C=\langle c \rangle$ be the cyclic subgroup generated
by a regular element $c$ in $G$,
with eigenvalue $\omega$ on some regular vector in $V$ and
$\hat\omega \in \CC$ a complex lift of $\omega$.
Let $\Gamma:=N_G(H)/H$, where $N_G(H)$ denotes
the normalizer of $H$ in $G$.
Give $k[V]^H$ the structure of
a $(k(\Gamma),k(C))$-bimodule in which $c$ scales the
variables $x_1 \commadots x_n$ in $V^*$ by
$\omega$, and $\Gamma$ acts by linear substitutions.
Let $k(H \backslash G)$ be the 
$k$-vector space with basis the
right costs of $H$ in $G$ regarded
as a $(k(\Gamma),k(C))$-bimodule where
\[
\gamma\cdot Hg\cdot c:= \gamma Hg
c=H \gamma g c.
\]

Then in $\Groth(k(\Gamma \times C))$ one has the equality
\[
\sum_{i \geq 0}(-1)^i \sum_{j \geq 0}
	\left[ \Tor_i^{k[V]^G}(k[V]^H,k)_j \right]
= \left[ k(H \backslash G) \right].
\]
Ignoring the $\Gamma$-action, this implies that the quotient of Hilbert series
\[
X(t):=\frac{[k[V]^H](t)}{[k[V]^G](t)},
\]
is a polynomial in $t$,
and together with the $C$-action on the set $X=H \backslash G$,
gives a triple $(X,X(t),C)$ that exhibits the
{\bf cyclic sieving phenomenon} of \cite{RStantonWhite}:
Namely, for each element $c^j$ in $C$
the cardinality of the fixed point set $X^{c^j} \subset X$
is
given by evaluating $X(t)$ at the complex root-of-unity $\hat\omega^j$ of
the same multiplicative order as $c^j$. In other words,
$	
|X^{c^j}| = \left[ X(t) \right]_{t=\hat{\omega}^j}.
$	
}
\end{CORY}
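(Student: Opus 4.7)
The plan is to specialize Theorem \ref{PolynomialInvariantsTheorem} to $U = k(H\backslash G)$ with $\Gamma = N_G(H)/H$, and then extract from the resulting Grothendieck-ring identity both the polynomiality of $X(t)$ and the cyclic-sieving identity.

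The first step invokes the bimodule identification
\[
\left(k(H\backslash G)\otimes_k k[V]\right)^G \cong k[V]^H
\]
as $(k(\Gamma), k(C))$-bimodules, given by $f \mapsto \sum_{Hg \in H\backslash G} Hg \otimes g^{-1}(f)$, the sum taken over coset representatives. Under this isomorphism, $\Gamma$ acts on $k[V]^H$ via the natural $N_G(H)$-action (well-defined modulo $H$ because $H$ acts trivially on $k[V]^H$), while $C$ acts by scaling each degree-$j$ component by $\omega^j$, matching exactly the bimodule structures in the statement. This is the setup described in Section \ref{induced-module-section}, so Theorem \ref{PolynomialInvariantsTheorem} applied here yields the first displayed equality of the corollary.

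For the polynomiality of $X(t)$, I would take a minimal graded free resolution $F_\bullet \to k[V]^H$ over $k[V]^G$; since $k[V]^G$ is polynomial, hence regular, this resolution has finite length and finitely generated terms. Using the Euler-characteristic identity for Hilbert series of a free resolution, together with the minimality identification $\Tor_i^{k[V]^G}(k[V]^H,k) \cong F_i \otimes_{k[V]^G} k$, one obtains
\[
\frac{[k[V]^H](t)}{[k[V]^G](t)} = \sum_{i \geq 0}(-1)^i \sum_{j \geq 0} \dim_k \Tor_i^{k[V]^G}(k[V]^H,k)_j \, t^j,
\]
the right-hand side being a polynomial because it has only finitely many nonzero terms. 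This recovers $X(t)$ as the Hilbert-series shadow of the Grothendieck-ring identity from step one.

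For cyclic sieving, I would forget the $\Gamma$-action to land in $\Groth(k(C))$. The scalar $C$-action on $k[V]$ commutes with $G$ and preserves internal degrees, so $c$ acts on each composition factor of $\Tor_i^{k[V]^G}(k[V]^H,k)_j$ as the scalar $\omega^j$. Because a regular vector has trivial $G$-stabilizer, $\omega$ has the same multiplicative order as $c$, shared by its complex lift $\hat\omega$. Applying the trace-at-$c^l$ homomorphism valued in $\CC$ to both sides (realized via a $p$-modular system in positive characteristic), the left-hand side evaluates to $\sum_{i,j}(-1)^i \hat\omega^{jl} \dim_k \Tor_i^{k[V]^G}(k[V]^H,k)_j = X(\hat\omega^l)$, while the right-hand side is the permutation-character count $|X^{c^l}|$, yielding the cyclic-sieving identity. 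The main technical subtlety is the trace evaluation when $c^l$ is not $p$-regular; this is handled by the scalar-action observation, since each composition factor of the Tor groups is the one-dimensional $k(C)$-module with $c$ acting by $\omega^j$, and lifts canonically to the one-dimensional $\CC(C)$-module on which $c$ acts by $\hat\omega^j$, making the evaluation independent of any decomposition-matrix choice.
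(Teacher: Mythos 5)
Your proposal is correct and follows essentially the same route as the paper: specialize Theorem~\ref{PolynomialInvariantsTheorem} to $U = \Ind_H^G k = k(H\backslash G)$ with $\Gamma = N_G(H)/H$, identify $M = (U\otimes_k k[V])^G$ with $k[V]^H$ via Frobenius reciprocity (your explicit map $f \mapsto \sum_{Hg} Hg\otimes g^{-1}(f)$ is exactly the isomorphism behind the paper's Proposition~\ref{Frobenius-reciprocity} and the $\Gamma$-equivariance established in \S\ref{induced-module-section}), read off polynomiality from the finite free resolution over the polynomial ring $k[V]^G$, and extract the sieving identity by evaluating Brauer characters of $C$ on both sides. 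One small remark: the ``technical subtlety'' you flag about $c^l$ failing to be $p$-regular never actually arises --- since $c$ is a regular element, its eigenvalue $\omega$ on a regular eigenvector $v$ has the same multiplicative order as $c$ (because $c^j v = \omega^j v = v$ forces $c^j=1$), and $\omega \in k^\times$ has order prime to $p$, so $c$ and all its powers are automatically $p$-regular; your scalar-action lifting argument is correct but unnecessary.
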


\subsection{A Further Generalization}
\sec{MoreGeneralResult}

Theorem \xref{PolynomialInvariantsTheorem} follows from
the more general Theorem \xref{maintheorem},
which does not make the assumption that
$k[V]^G$ is polynomial and has
other applications. The guiding principle in this generalization is to replace
the condition that $k[V]^G$ be a polynomial algebra with the assumption it
contains a Noether normalization $R \subseteq k[V]^G$ fulfilling certain key
technical conditions. Recall that a {\emphasis Noether normalization}
for $k[V]^G$ is a polynomial subalgebra $R \subset k[V]^G$
over which $k[V]^G$ is a finitely generated module.

To state our result requires some preliminaries.
Let $V$
be a $(k(G),k(C))$-bimodule with $G$ and $C$ finite  groups, so that
$C$ acts on $k[V]$ and
$k[V]^G$.  
Suppose there exists a homogeneous
and $C$-stable Noether normalization $R \subset k[V]^G \subset
k[V]$, with the
additional properties
that the fiber $\Phi_v:=\phi^{-1}(\phi(v))$
over the point
$\phi(v)$ in the ramified covering $V \rightarrow \Spec R $
has free (but not necessarily transitive) $G$-action, and is stable under $C$.  
This means that
$C$
preserves the tower of inclusions 
\[
\mm_{\phi(v)} \subset R \subset k[V]^G \subset k[V],
\]
where $\mm_{\phi(v)}$ is the (generally inhomogeneous) maximal ideal
in $R$ corresponding to $\phi(v)$ in $\Spec R $.  
The quotient ring
$
k[V]/\mm_{\phi(v)} k[V]  =:A(\Phi_v) 
$ 
can be
thought of as the coordinate ring for the fiber $\Phi_v$ regarded
as a (possibly non-reduced) subscheme of
the affine space $V$.
This ring $A(\Phi_v)$ carries an interesting $(k(G),k(C))$-bimodule structure,
whose precise description we defer
until \sex\xref{incorporating-cyclic-action-section}
where the extra generality is exploited.

However, it is worth mentioning here
what this bimodule structure looks like under the hypotheses
of Theorem \xref{PolynomialInvariantsTheorem}, that is,
if $k[V]^G$ is polynomial, so that we can choose $R = k[V]^G$,
and where $c$ is a regular element of $G$ 
with eigenvalue $\omega$ on a regular eigenvector $v$.
In this special case, 
if we take for $C$ the group of scalar matrices in
$GL(V)$ generated by $\omega$ times the identity matrix,
then $A(\Phi_v) \cong k(G)$ carries the same $(k(G),k(C))$-bimodule structure
as was described on $k(G)$ in Springer's theorem (Theorem
\xref{PolynomialInvariantsTheorem}).

Back in the general setting, given a $(k(\Gamma),k(G))$-bimodule $U$,
one lets $C$ act trivially on
$U$ and $\Gamma$ act trivially on $k[V]$.  In this
way, the relative invariants $M:=(U \otimes_k k[V])^G$ carry the
structure of a graded $(k(\Gamma),k(C))$-bimodule, compatible with its
$R$-module structure.  Similarly $(U \otimes_k A(\Phi_v))^G$ carries the
structure of a $(k(\Gamma),k(C))$-bimodule in which $\Gamma$
acts only on the $U$ factor, and $C$ acts only on the $A(\Phi_v)$ factor.

\begin{THM} {}
\label{maintheorem}
{
Let $k$ be a field, $G, \Gamma$\/, and $C$ finite groups, and $V$ a finite
dimensional $(k(G), k(C))$-bimodule on which $G$ acts faithfully
(so $G \subset GL(V)$). Regard $V, k[V]$ and $k[V]^G$ as trivial
$k(\Gamma)$-modules.
Suppose there is a Noether normalization $R \subset k[V]^G$
that is stable under the action of $C$ on $k[V]$.
Suppose further that there is a vector $v$ in $V$ such that the fiber
$\Phi_v:=\phi^{-1}(\phi(v))$ containing $v$
for the map $\phi: V \rightarrow \Spec R$ 
carries both a free (but not necessarily transitive) $G$-action and is stable
under $C$.
Denote by $\mm_{\phi(v)}$ the maximal ideal in $R$
corresponding to $\phi(v)$ in $\Spec R$ and set
$A(\Phi_v) = k[V]/\mm_{\phi(v)} k[V]$ 
which is a $k(C)$-module.
Let $U$ be a finite-dim\-en\-sion\-al $(k(\Gamma),k(G))$-bimodule
regarded as a trivial $k(C)$-module.

Then the relative invariants $M:=(U \otimes_k k[V])^G$ satisfy
$$
\sum_{i \geq 0} (-1)^i \sum_{j \geq 0}
	\left[ \Tor_i^R( M, k )_j \right]
	= \left[ \left( U\otimes_k A(\Phi_v) \right)^G \right]
$$
in $\Groth(k (\Gamma \times C))$; the sum being finite since $R$ is a
polynomial algebra.
}
\end{THM}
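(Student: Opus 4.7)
The plan is to interpret both sides of the claimed identity as Euler characteristics computed from a single $(\Gamma\times C)$-equivariant graded free $R$-resolution of $M$, evaluated at two distinct $k$-rational points of $\Spec R$: the homogeneous origin (which produces the $\Tor$-sum on the left) and the point $\phi(v)$ (which produces the fiber invariants on the right).

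First, I construct a $(\Gamma\times C)$-equivariant finite graded free $R$-resolution $F_\bullet\to M\to 0$ in which each $F_i = R\otimes_k W_i$ for a finite-dimensional graded $(\Gamma\times C)$-module $W_i$; here $\Gamma$ acts trivially on $R$ and $C$ acts diagonally on the tensor product. Because $C$ acts on $R/R_+ = k$ through graded $k$-algebra automorphisms, which are necessarily trivial, the natural map $F_i\otimes_R k \iso W_i$ is an isomorphism of $(\Gamma\times C)$-modules. Since $c$ fixes $\phi(v)$ by hypothesis, the same analysis applies to the residue field $k_{\phi(v)} := R/\mm_{\phi(v)}$: the evaluation map $F_i\otimes_R k_{\phi(v)} \to W_i$, $[r\otimes w]\mapsto r(\phi(v))w$, is also $(\Gamma\times C)$-equivariant, thanks to the identity $c(r)(\phi(v))=r(c^{-1}\phi(v))=r(\phi(v))$. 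Consequently the complexes $F_\bullet\otimes_R k$ and $F_\bullet\otimes_R k_{\phi(v)}$ have termwise equal classes in $\Groth(k(\Gamma\times C))$, and their Euler characteristics --- which agree with those of their respective homologies, since both are bounded complexes of finite-dimensional $(\Gamma\times C)$-modules --- coincide:
\[
\sum_{i\ge 0}(-1)^i \sum_{j\ge 0}\bigl[\Tor^R_i(M,k)_j\bigr]
\;=\; \sum_{i\ge 0}(-1)^i\bigl[\Tor^R_i(M, k_{\phi(v)})\bigr].
\]

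It remains to prove the vanishing $\Tor^R_i(M, k_{\phi(v)})=0$ for $i>0$ and the identification $\Tor^R_0(M, k_{\phi(v)}) \iso (U\otimes_k A(\Phi_v))^G$, and for this I localize at $\mm_{\phi(v)}$. Since $R\subset k[V]$ is a finite Cohen--Macaulay extension of polynomial rings of the same Krull dimension, $k[V]$ is a free $R$-module; the hypothesis that $G$ acts freely on $\Phi_v$ makes the fiber $A(\Phi_v)$ a free $k(G)$-module; and a graded Nakayama argument then lifts a $k(G)$-basis of $A(\Phi_v)$ to an $R_{\mm_{\phi(v)}}[G]$-basis of $k[V]_{\mm_{\phi(v)}}$. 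Tensoring over $k$ with $U$ and using that $U\otimes_k k(G)$ with its diagonal $G$-action is $k(G)$-free, I conclude that $(U\otimes_k k[V])_{\mm_{\phi(v)}}$ is free over $R_{\mm_{\phi(v)}}[G]$, hence $G$-cohomologically acyclic. Taking the Koszul complex on a regular sequence $g_1,\dots,g_n$ generating $\mm_{\phi(v)} R_{\mm_{\phi(v)}}$ and tensoring with $(U\otimes_k k[V])_{\mm_{\phi(v)}}$ yields a complex exact in positive degrees with $H_0 = U\otimes_k A(\Phi_v)$, whose terms are free over $R_{\mm_{\phi(v)}}[G]$; applying $G$-invariants preserves exactness by $G$-acyclicity and returns the Koszul complex applied to $M_{\mm_{\phi(v)}}$, whose homology computes $\Tor^R_\bullet(M, k_{\phi(v)})$. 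This yields the desired vanishing and identification, completing the proof.

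The main obstacle is this Nakayama step: upgrading the $R$-freeness of $k[V]$ together with the $k(G)$-freeness of the fiber $A(\Phi_v)$ to $R_{\mm_{\phi(v)}}[G]$-freeness of the stalk $k[V]_{\mm_{\phi(v)}}$. This is the algebraic incarnation of the finite cover $V\to\Spec R$ being \'etale with free $G$-action above $\phi(v)$, and it is the one place where the free-fiber hypothesis on $\Phi_v$ enters essentially. A secondary technical point is the construction of the $(\Gamma\times C)$-equivariant finite graded free resolution of $M$, which must be handled without invoking Maschke's theorem since neither $|G|$ nor $|\Gamma|$ need be invertible in $k$.
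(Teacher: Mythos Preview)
Your two-step strategy matches the paper's: first show the Euler characteristics over $R/R_+$ and over $R/\mm_{\phi(v)}$ agree in $\Groth(k(\Gamma\times C))$, then use the free $G$-action on $\Phi_v$ to kill higher $\Tor$ and identify $\Tor_0$. The implementations diverge in the second step. There the paper passes to completions, applies the Chinese Remainder Theorem to write $\hat A\cong\hat B(G)$ as $\hat B(G)$-modules, and then invokes Auslander--Buchsbaum to make $\hat B$ free over $\hat R$ (Lemma~\ref{regular-G-orbits}). Your localization-plus-Nakayama argument---lifting a $k(G)$-basis of $A(\Phi_v)$ to an $R_{\mm}[G]$-basis of $k[V]_{\mm}$ using that $k[V]$ is already $R$-free, then pushing the Koszul complex through $G$-invariants via $G$-acyclicity---is a clean alternative that also bypasses the paper's preliminary Brauer-theoretic reduction (Lemma~\ref{reduction1-lemma}) stripping away $\Gamma$; you keep $\Gamma\times C$ intact throughout, and the natural map $M/\mm_{\phi(v)}M\to (U\otimes_k A(\Phi_v))^G$ is automatically $(\Gamma\times C)$-equivariant once you know it is bijective.

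One caution on your first step. You assert a \emph{finite} equivariant resolution with each $F_i=R\otimes_k W_i$ carrying the diagonal $(\Gamma\times C)$-action, so that $F_i\otimes_Rk\cong W_i\cong F_i\otimes_R k_{\phi(v)}$ termwise as $(\Gamma\times C)$-modules. In the generality of the theorem such a finite resolution in that specific shape need not exist: the iterative construction producing terms $R\otimes_k W_i$ can fail to terminate, because after Hilbert-syzygy-many steps the free kernel carries a $\Theta$-action whose matrix entries may lie in $R_{>0}$ rather than in $k$, so it need not split as $R\otimes_k W$ (the paper exhibits exactly this phenomenon immediately after Proposition~\ref{finite-hd-proposition}). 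The paper's fix is to truncate to a finite equivariant free resolution and prove only $[F_i\otimes_Rk]=[F_i\otimes_Rk_{\phi(v)}]$ in the Grothendieck group via a degree filtration (Proposition~\ref{finite-hd-proposition}). That weaker Grothendieck-group equality is all your Euler-characteristic argument needs, and substituting it patches your outline.
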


From this we easily deduce
Theorem \xref{PolynomialInvariantsTheorem} 
in Section \xref{incorporating-cyclic-action-section}.
Section \xref{cyclic-only-section} illustrates a different type of
application of Theorem \xref{maintheorem}, namely
to Brauer character values in the simple group of order 168.

\section{Generalities for Rings, Modules, and $\Tor$}
\label{rationality-section}

We record here some issues surrounding $\Tor^R(M,k)$ and the action of a group 
on $R$-resolutions of $M$. Without the group action most of these results can
be found in \cite{smoke}.
Because the group varies in the applications we
denote it by the new symbol $\Theta$. It will always be assumed 
throughout Section~\ref{rationality-section} 
that
\itemize
\jitem \label{assumering}
$R=\oplus_{d \geq 0} R_d$ is a commutative,
$\NN$-graded, connected ($R_0=k$), Noetherian
$k$-algebra (i.e., finitely generated as an algebra over $k$) and
\jitem \label{assumegroup}
$\Theta$ is a group which acts on $R$ by graded $k$-algebra automorphisms.
\altenditemize
In addition we will consider finitely generated $R$-modules with a
compatible homogeneous action of $\Theta$ in
a sense
conveniently described in terms of  
the {\emphasis skew group algebra} $R\rtimes\Theta$
(see e.g., \cite{AuslanderReitenSmalo}).
This is the free $R$-module with elements $\{t_g\}_{g \in \Theta}$
indexed by $\Theta$ as a basis, and whose multiplication is determined by the rule 
$r t_g \cdot s t_h=r \cdot g(s)t_{gh}$ and bilinearity, for all
$r, s\in R$ and $g, h\in \Theta$.
We put a grading on $R\rtimes\Theta$ by requiring that an element
$r t_g$ has the same degree as $r$. 
An $R\rtimes\Theta$-module is the same thing as an $R$-module $M$
with an action of $\Theta$ on $M$ regarded as a graded abelian group
by grading preserving group endomorphisms satisfying $g(rm)=g(r)g(m)$ and 
$g_1(g_2m)=(g_1g_2)m$\/.
This is what we mean by a {\emphasis compatible} action of $\Theta$. 
So the third standing assumption in this section is that
\itemize
\catcode`\@=11\advance\c@enumi by 2\catcode`\@=12
\jitem \label{assumemodule}
$M=\oplus_{d \geq 0}M_d$ is an $\NN$-graded $R\rtimes\Theta$-module which is
Noetherian (i.e., finitely generated)  as an $R$-module.
\enditemize

The kind of graded $R\rtimes\Theta$-modules we will consider arises,
for example in the situation
discussed in the introduction: We have a $(k(G),k(C))$-bimodule $V$, so that
$C$ acts on $k[V], k[V]^G$ and possibly also on a Noether normalization
$R \subset k[V]^G$,
as well as compatibly on the $R$-module $M:=(U\otimes_k k[V] )^G$ for any
$(k(\Gamma),k(G))$-bimodule $U$.
In fact putting $\Theta=\Gamma\times C$, $M$ becomes an
$R\rtimes\Theta$-module.

\subsection{Review of Graded Resolutions}
\label{resolve}
Recall that,
ignoring group actions, 
there always exist
{\it graded $R$-free resolutions} $\FFF$
of $M$ in which all terms are finitely generated, that is, an exact sequence
$$
\cdots \overset{d_{i+1}}{\rightarrow}
F_i \overset{d_i}{\rightarrow}
F_{i-1} \overset{d_{i-1}}{\rightarrow} 
\cdots
F_1 \overset{d_1}{\rightarrow}
F_0  \overset{d_0}{\rightarrow} M \rightarrow 0
$$
with each $F_i$ a graded free $R$-module of finite rank,
and grade-preserving differentials $d_i$.  
From any such resolution one can compute the bigraded $k$-vector space
$\Tor^R(M,N) = \lbrace \Tor^R_i(M,N)_j\rbrace$ 
for any graded $R$-module $N$, 
by taking the homology of the tensored complex $\FFF \otimes_R N$.
This means that $\Tor_i^R(M,N)$ for $i \in \NN$ is a graded $k$-vector space;
the index $i$ is called the  {\emphasis homological grading},
and the grading on $\Tor^R_i(M, N)$\/,
namely the index $j$\/, is called the {\emphasis internal} grading.
Depending on the context we will use the notation
$\Tor^R(M, N)$
for the bigraded $\Tor$-functor, or its ungraded analog obtained by taking
the direct sum of the homogeneous components
$\Tor^R_i(M,N)_j$ for all $i$ and $j$\/.

It is possible to choose the resolution $\FFF$ to be {\emphasis minimal}
in the sense that
the ranks $\beta_i$ of the resolvents $F_i \cong R^{\beta_i}$ are
simultaneously all minimized;
this turns out to be equivalent to each differential $d_i$ having
entries in $R_+ = \oplus_{i > 0} R_i$.	
In particular, when $N=k=R/R_+$ is the trivial $R$-module,
if the complex is minimal then $\FFF \otimes_R N$ becomes a complex of
$k$-vector spaces with all zero differentials, showing that
$$
\beta_i=\dim_k \Tor^R_i(M,k).
$$

The length of a minimal resolution is called the 
{\emphasis homological dimension} $\hd_R(M)$,
that is, $\hd_R (M):=\min\{i: \Tor^R_i(M,k) \neq 0\}$.  Note that $\hd_R (M)$
need not be finite.  However, {\it Hilbert's syzygy theorem} asserts that
when $R$ is a polynomial algebra on $n$ generators,
one always has $\hd_R(M) \leq n$.

Given an $\NN$-graded $k$-vector space $U=\sum_{d \geq 0} U_d$,
let 
$$
\mindeg (U):=\min\{d: U_d \neq 0\}.
$$
The usual construction of a minimal
free $R$-resolution $\FFF$ of $M$
(see e.g., the proof of Proposition \xref{rationality-proposition} (i) to follow)
shows that it enjoys the property
$\mindeg (F_{i+1}) > \mindeg (F_i)$.
We will show in Section~\ref{non-domain-section} that a similar
property holds after incorporating a finite group action.

\subsection{The Group Action on $\Tor$}
\label{GroupActionsOnTor}

We start by pointing out that for $R\rtimes\Theta$-modules
$M$ and $N$ (where $\Theta$ is a group which acts on $R$ by graded $k$-algebra
automorphisms) there are diagonal actions of $R$ and $\Theta$
on $M\otimes_RN$ and also on $\Tor_i^R(M,N)$ making 
them into $R\rtimes\Theta$-modules.
Since $R$ is commutative, we allow ourselves 
to take the tensor product of two left $R$-modules.
We claim that for 
each $g\in\Theta$ the map $M\times N\to M\otimes_RN$ given by 
$(m,n)\mapsto g(m)\otimes g(n)$ is $R$-balanced. 
To establish this we must show that for each $r\in R$ we have 
$g(rm)\otimes g(n)= g(m)\otimes g(rn)$.
This is so because 
\[
g(rm)\otimes g(n) = g(r)g(m)\otimes g(n) = g(m)\otimes g(r)g(n)
	= g(m)\otimes g(rn).
\]
From this we obtain the diagonal action of $R\rtimes\Theta$ on $M\otimes_R N$.

In fact for each $g\in\Theta$ we have a natural transformation from the functor
\[-\otimes_RN:R \rtimes\Theta\hbox{-mod}\to R\hbox{-mod}\]
to itself, giving a functor
\[-\otimes_RN:R \rtimes\Theta\hbox{-mod}\to R\rtimes\Theta\hbox{-mod}.\]

We next show that the diagonal action of $\Theta$ extends to an action on
$\Tor_i^R(M,N)$ for $i > 0$. 
Regard $\Tor_i^R(-,N)$ as a functor
$R\rtimes\Theta\hbox{-mod}\to R\hbox{-mod}$\/.
For each $g\in\Theta$ and $i \in \NN$ we construct a natural transformation
$\eta_{g,i}$ from $\Tor_i^R(-,N)$ to itself so that these maps also commute
with the boundary maps in the 
long exact sequences which arise from 
any short exact sequence of 
$R \rtimes\Theta$-modules $0\to M_1\to M_2\to M_3\to 0$. 
For $i=0$ the natural transformation will be the one already constructed. 
To extend this for arbitrary $i$, we may take any complex of projective
$R \rtimes\Theta$-modules ${\mathcal P} = (\cdots\to P_2\to P_1\to P_0\to 0)$
which is acyclic except in degree zero where its homology is $M$.
Since $R\rtimes\Theta$ is free as an $R$-module this is also a projective
resolution of $M$ as an $R$-module,
so $\Tor_i^R(M,N)=H_i({\mathcal P}\otimes_R N)$.
As the action of $\Theta$ is by natural transformations of the functor
$-\otimes_R N$ it passes to an action on the complex
${\mathcal P}\otimes_R N$ and hence to an action on its homology. 
The verification that this action commutes with the boundary homomorphisms is a
standard argument in homological algebra.

We finally observe that if higher natural transformations exist with these
properties, they must be unique.
This follows from a homological degree shifting argument
(cf \cite[Chapter III]{CE}), 
since given any $R \rtimes\Theta$-module $M$ one may take a short exact
sequence $0\to K\to P\to M\to 0$ where $P$ is a projective
$R \rtimes\Theta$-module. This gives a long exact sequence
\[
0=\Tor_i^R(P,N)\to \Tor_i^R(M,N)\to 
	\Tor_{i-1}^R(K,N)\to \Tor_{i-1}^R(P,N)\to\cdots
\]
and the specification of $\eta_{g,i-1}$ on $\Tor_{i-1}^R(K,N)$ and
$\Tor_{i-1}^R(P,N)$ (which is only non-zero for $i=1$)
determine the specification of $\eta_{g,i}$ 
on $\Tor_i^R(M,N)$ since these maps commute with the connecting homomorphisms. 
Then the uniqueness implies that $\Tor_i^R(M,N)$ becomes an $R\rtimes\Theta$-module,
since the relations on the action of $\Theta$ which are needed for this hold 
if $i=0$, hence also for higher values of $i$.

\subsection{The Group Action on Resolutions}
\label{non-domain-section}

Given these preliminaries we can state and
prove our first rationality result.
This proposition can be regarded as an equivariant generalization of the
theorem of Hilbert--Serre (see e.g., \cite[Theorem 4.2]{smoke})
on the rationality of Poincar\'e series of
graded  Noetherian modules over Noetherian $k$-algebras.

\begin{PROP}
\label{rationality-proposition}
Let $R$ be a commutative $\NN$-graded Noetherian $k$ algebra,
$\Theta$ is a group which acts on $R$ by graded $k$-algebra automorphisms,
and $M$ a Noetherian $R \rtimes \Theta$-module.
\itemize
\iitem \label{existresol}	
There exists an $R$-resolution $\FFF=\{F_i\}_{i \geq 0}$ of $M$
in which each resolvent $F_i$ is not only a free $R$-module but also a
$k(\Theta)$-module, the maps are $k(\Theta)$-module morphisms, and
$\mindeg (F_{i+1}) > \mindeg (F_i)$.  
Consequently, the infinite 
sum
\[
\sum_{i \geq 0} (-1)^i [\Tor_i^R(M,k)](t)
\]
gives rise to a well-defined element in $\Groth(k(\Theta))[[t]]$.

\iitem \label{semisimplecase}	
When $k(\Theta)$ is semisimple (i.e., $|\Theta| \in k^\times$),
the resolution in (i) can in addition be chosen minimal as an $R$-resolution.

\iitem \label{hdfinite}
If $\hd_R (M)$ is finite, the resolution in
(\romannumeral\xref{existresol}) or in (\romannumeral\xref{semisimplecase})
can in addition be chosen with length $\hd_R (M)$.

\iitem \label{seriesinvertible}	
In $\Groth(k(\Theta))[[t]]$, the series $[R](t)$ is invertible,
and one has the following relation: 
\[
\sum_{i \geq 0} (-1)^i [\Tor_i^R(M,k)](t) = \frac{[M](t)}{[R](t)}.
\]

\iitem \label{allseries}
All three series
\[
[R](t)\comma~ [M](t)\comma~\sum_{i \geq 0} (-1)^i [\Tor_i^R(M,k)](t)
\]
in $\Groth(k(\Theta))[[t]]$
have the property that, for each simple $k(\Theta)$-module $S$,
the power series in $t$
counting the multiplicity of $[S]$ actually lies in $\QQ(t)$.

\iitem \label{multiplicity}
For each simple $k(\Theta)$-module $S$, the coefficient series
of $[S]$ in $[M](t)$ and $[R](t)$ have poles at $t=1$ of order at most
the Krull dimension of $R$.
\altenditemize
\end{PROP}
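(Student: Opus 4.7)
I would build the resolution inductively. Set $M_0 := M$; given $M_i$, let $d_i := \mindeg M_i$ and choose a finite-dimensional graded $k(\Theta)$-submodule $W_i \subseteq M_i$ whose image generates $\overline{M}_i := M_i/R_+M_i$ and which satisfies $(W_i)_{d_i} = (M_i)_{d_i}$. Such a $W_i$ exists because each graded piece of the Noetherian $R$-module $M_i$ is finite-dimensional, so the $k(\Theta)$-submodule generated by finitely many homogeneous elements stays finite-dimensional. Set $F_i := R \otimes_k W_i$ with diagonal $R$- and $k(\Theta)$-actions; then $F_i$ is $R$-free, is a graded $R\rtimes\Theta$-module, and $F_i \twoheadrightarrow M_i$ is surjective by graded Nakayama. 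Setting $M_{i+1} := \ker(F_i\to M_i)$, the condition $(W_i)_{d_i}=(M_i)_{d_i}$ makes $F_i\to M_i$ an isomorphism in degree $d_i$, so $\mindeg M_{i+1} > d_i$ and hence $\mindeg F_{i+1} > \mindeg F_i$. Iteration yields $\mindeg F_i \geq i$, which forces $\Tor_i^R(M,k)_j$ to vanish for $j<i$, making $\sum_i (-1)^i [\Tor_i^R(M,k)](t)$ well-defined in $\Groth(k(\Theta))[[t]]$. For (ii), if $|\Theta|\in k^\times$ then $k(\Theta)$ is semisimple by Maschke, so the graded short exact sequences $0\to R_+M_i\to M_i\to \overline{M}_i\to 0$ split as $k(\Theta)$-modules; choosing $W_i$ to be a splitting $W_i\iso\overline{M}_i$ makes the $R$-rank of $F_i$ equal to the minimum number of $R$-generators of $M_i$, so the resolution is minimal.

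\textbf{Parts (iii) and (iv).} For (iii), let $n := \hd_R M < \infty$ and consider the $n$-th syzygy $M_n \subseteq F_{n-1}$ of the resolution from (i) (or (ii)). It is an $R\rtimes\Theta$-submodule; the vanishings $\Tor_i^R(M_n,k)=\Tor_{i+n}^R(M,k)=0$ for $i\geq 1$ show $M_n$ is flat over $R$, hence $R$-free since it is finitely generated over a positively graded Noetherian $k$-algebra. Truncating by replacing $F_n$ with $M_n$ yields the required length-$n$ resolution. For (iv), the constant term of $[R](t)$ is $[k]=[\triv]$, the identity of $\Groth(k(\Theta))$, so $[R](t)$ is invertible in $\Groth(k(\Theta))[[t]]$. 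Taking the Euler characteristic of the exact sequence $\cdots\to F_1\to F_0\to M\to 0$ in each graded piece (finite by the $\mindeg$ bound) gives $\sum_i(-1)^i[F_i](t)=[M](t)$, and $F_i = R\otimes_k W_i$ gives $[F_i](t)=[R](t)[W_i](t)$. The bounded-in-each-degree complex $W_\bullet = \FFF\otimes_R k$ computes $\Tor^R(M,k)$, with Euler characteristic equal to that of its homology, so $\sum_i(-1)^i[W_i](t)=\sum_i(-1)^i[\Tor_i^R(M,k)](t)$. Dividing by $[R](t)$ yields (iv).

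\textbf{Parts (v) and (vi), and the main obstacle.} For $\Theta$ finite (as in the applications), Hilbert--Noether makes $R^\Theta\subseteq R$ a finitely generated graded $k$-subalgebra over which $R$ is finite, and a graded Noether normalization supplies a polynomial subring $P=k[f_1,\dots,f_d]\subseteq R^\Theta$ with $\deg f_i=e_i$ and $d = \dim R$. Both $R$ and $M$ become finitely generated graded $P\rtimes\Theta$-modules; applying (i) with base $P$, combined with Hilbert's syzygy theorem, produces a finite $P$-free, $P\rtimes\Theta$-equivariant resolution with resolvents of the form $P\otimes_k U_\bullet$. Since $P$ has trivial $\Theta$-action, the Euler characteristic identity of (iv) applied over $P$ yields
\[
[R](t) = p_R(t)\cdot\prod_{i=1}^d\frac{1}{1-t^{e_i}},\qquad [M](t) = p_M(t)\cdot\prod_{i=1}^d\frac{1}{1-t^{e_i}},
\]
with $p_R(t), p_M(t)\in\Groth(k(\Theta))[t]$. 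Reading off each $[S]$-coefficient gives both rationality and the pole-order bound $\leq d = \dim R$ for $[R](t)$ and $[M](t)$. The main obstacle is the remaining rationality for the ratio $\sum_i(-1)^i[\Tor_i^R(M,k)](t) = [M](t)/[R](t)$, since $[S]$-coefficients do not split a ratio. I would work inside the finite-dimensional $\QQ(t)$-algebra $\Groth(k(\Theta))\otimes_\ZZ\QQ(t)$: the determinant of left multiplication by $[R](t)$ is a nonzero element of $\QQ(t)$, nonzero because specialization at $t=0$ gives $\det[\triv]=1$, so $[R](t)$ is invertible there with inverse having $[S]$-coefficients in $\QQ(t)$; multiplying by $[M](t)$ preserves the rationality because the structure constants of $\Groth(k(\Theta))$ are integers.
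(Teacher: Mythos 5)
Your proposal follows the paper's strategy closely: parts (i), (ii), (iv) are essentially the same construction ($F_i = R\otimes_k W_i$ with $W_i$ the $k(\Theta)$-submodule generated by lifts of a $k$-basis of $M_i/R_+M_i$, with a direct-summand choice when $k(\Theta)$ is semisimple), part (vi) is the same reduction to a Noether normalization $A = k[f_1,\dots,f_d]$ with trivial $\Theta$-action and the resulting factorization $[M](t) = p_M(t)\prod (1-t^{e_i})^{-1}$, and part (iii) is equivalent (the paper inducts on $\hd_R M$, whereas you truncate at the $n$-th syzygy $M_n$ and observe it is $R$-free; note that $\Tor_1^R(M_n,k)=0$ gives freeness directly by graded Nakayama, so the detour through ``flat, hence free'' is unnecessary, though harmless). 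The interesting divergence is in part (v). The paper disposes of the rationality of $\sum(-1)^i[\Tor_i^R(M,k)](t)$ in one clause, ``uses (iv) to deduce it,'' once rationality of $[M](t)$ and $[R](t)$ is known, but as you correctly observe this is not an entry-wise division: multiplication in $\Groth(k(\Theta))[[t]]$ mixes the $[S]$-coefficients, so knowing that the $[S]$-coefficients of $[M](t)$ and $[R](t)$ lie in $\QQ(t)$ does not by itself put the $[S]$-coefficients of the quotient in $\QQ(t)$. Your argument that $[R](t)$ is a unit of the finite-dimensional $\QQ(t)$-algebra $\Groth(k(\Theta))\otimes_\ZZ\QQ(t)$ (the determinant of left-multiplication is a rational function, nonzero because its value at $t=0$ is $1$, since $[R](0)=[\triv]$), together with the observation that the structure constants are integers, is exactly the right justification, and supplies a step the paper's proof leaves implicit. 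Everything else is correct and matches the paper's route.
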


\begin{proof}
{\sf (\romannumeral\xref{existresol}):}
Because $M$ is finitely-generated as an $R$-module, the quotient
$M/R_+ M$ is a finite-dimensional, graded $k$-vector space.
By the graded version of Nakayama's Lemma
any homogeneous $k$-basis $\{\bar{m}_j\}$ for $M/R_+ M$ lifts to a
minimal homogeneous generating set $\{m_j\}$ for $M$ as an $R$-module.
Let $U$ be the $k(\Theta)$-submodule of $M$ generated by any choice of such
lifts $\{m_j\}$.
Then $U$ is a graded finite-dimensional subspace of $M$ because $\Theta$ is
finite and acts in a degree-preserving fashion on $M$.
Start a resolution $\FFF$ with the surjection
\[
F_0:= R \otimes_k U \overset{d_0}{\longrightarrow}  M\comma
\quad\mbox{where}\quad
r \otimes u  \longmapsto  ru\period
\]
Observe that the diagonal action of $k(\Theta)$ on $F_0:= R \otimes_k U$ is
required here, both to make $d_0$ a $k(\Theta)$-module morphism, and to make
the $R$-module structure on $F_0$ compatible with the $k(\Theta)$-module
structure of $R$.  
Observe also that $U\cong k\otimes_R F_0$, a relationship which will be used
in proving (\romannumeral\xref{seriesinvertible}).

Replacing $M$ by $\ker (d_0)$, we can iterate this process, and produce
the desired resolution $\FFF$, provided we can show that the inequality
$$
\mindeg (\ker (d_0)) > \mu:= \mindeg (M) =\mindeg (F_0)
$$
holds. However, this follows easily from the observation that
the restriction of the above map $d_0$ to the $\mu^{th}$ 
homogeneous components is the $k$-vector space isomorphism
\[
R_0 \otimes_k U_\mu =  k \otimes_k U_\mu \overset{d_0}{\longrightarrow}
	U_\mu = M_\mu\comma\quad\mbox{where}\quad
1 \otimes u  \longmapsto u\comma
\]
and hence $\ker (d_0)$ is nonzero only in degrees strictly larger than $\mu$.

\vskip .1in
\noindent
{\sf (\romannumeral\xref{semisimplecase}):}  If $k(\Theta)$ is semisimple,
then in the construction of $\FFF$ in (\romannumeral\xref{existresol}),
the $k(\Theta)$-submodule $U \subset M$ is a direct summand so there is
a $k(\Theta)$-module direct sum decomposition $M = U \oplus R_+M$.  Since
$U \cong M/R_+M$, iterating this construction will produce a minimal
resolution.

\vskip .1in
\noindent
{\sf (\romannumeral\xref{hdfinite}):} This we prove by induction on $\hd_R(M)$.
In the base case, i.e., where
$\hd_R (M)= 0$, $M$ is a free $R$-module and the assertion is trivial.
In the inductive step, note that after one step of the construction in
(\romannumeral\xref{existresol}) or (\romannumeral\xref{semisimplecase}),
there is a short exact sequence
\[
0 \rightarrow \ker (d_0) \rightarrow F_0 \rightarrow M \rightarrow 0
\]
whose long exact sequence in $\Tor^R(-,k)$ shows that
\[
\hd_R (\ker (d_0)) = \hd_R (M) - 1.
\]
Applying the inductive hypothesis to $\ker(d_0)$ gives the result.

\vskip .1in
\noindent
{\sf (\romannumeral\xref{seriesinvertible}):}
The fact that $[R](t)$ lies in $\Groth(k(\Theta))[[t]]^\times$
follows from the assumption that $R$ is a connected $k$-algebra 
with $\Theta$ acting trivially on $R_0=k$.  This means that the series expansion
of $[R](t)$ begins $[\triv] + [R_1] t^1 + [R_2] t^2 + \cdots$,
and $[\triv]$ is a unit of $\Groth(k(\Theta))$.

For the remaining assertion, start with a resolution $\FFF$ of $M$
produced as in (\romannumeral\xref{existresol}).  
There is the following string of equalities in $\Groth(k(\Theta))[[t]]$,
which are justified below.
\[
\begin{aligned}[]
[M](t) & = \sum_{i \geq 0} (-1)^i [F_i](t) \\
       & = \sum_{i \geq 0} (-1)^i [R \otimes_k (k \otimes_R F_i)](t) \\
       & = \sum_{i \geq 0} (-1)^i [R](t) \cdot [k \otimes_R F_i](t) \\
       & = [R](t) \sum_{i \geq 0} (-1)^i \cdot [k \otimes_R F_i](t) \\
       & = [R](t) \sum_{i \geq 0} (-1)^i \cdot [\Tor^R_i(M,k)](t) \\
\end{aligned}
\]
The first equality comes from looking at the Euler characteristic
for the (finite) exact sequence in each homogeneous component.

The second equality comes from the fact that $F_i$ is 
constructed as $R\otimes_k U_i$ and
$U_i\cong k\otimes_R F_i$, so $F_i\cong R\otimes_k (k\otimes_R F_i)$.

The third equality comes from 
the fact that in the isomorphism $F_i\cong R\otimes_k (k\otimes_R F_i)$
the action of $\Theta$ on the tensor product on the right is diagonal,
and this 
tensor product defines the product in
$\Groth(k(\Theta))[[t]]$.

The fourth equality is trivial.

The fifth equality holds because $\Tor^R(M,k)$ is the homology of the
complex $\FFF \otimes_R k$.
In each homogeneous component $(\FFF \otimes_R k)_d$
one has a finite complex of finite-dimensional $k$-vector spaces,
and the alternating sum $\FFF_i \tensor_R k$ over $i$
represents the same element in $\Groth(k(\Theta))$ as the
alternating sum of the $\Tor_i^R(M,k)_d$ in that component
(i.e., taking homology preserves Euler characteristics).

\vskip .1in
\noindent
{\sf  (\romannumeral\xref{allseries}):}
It suffices to prove the assertion for $[M](t)$;  one then takes
$M=R$ to deduce it for $[R](t)$, and uses
(\romannumeral\xref{seriesinvertible}) to deduce it for 
$\sum_{i \geq 0} (-1)^i [\Tor_i^R(M,k)](t)$\/.

To prove $[M](t)$ is invertible, one can reduce to the case where $R$ is
a polynomial algebra $A=k[f_1,\ldots,f_n]$ with trivial $\Theta$-action as
follows.  Note that since $R$ is Noetherian, by a result of 
Emmy Noether
$R^\Theta$ is also Noetherian.  Hence by the Noether Normalization Lemma,
$R^\Theta$ contains a homogeneous system of parameters $f_1,\ldots,f_n$, and we
put $A=k[f_1,\ldots,f_n]$.  The ring
extensions $A \hookrightarrow R^\Theta \hookrightarrow R$ are both integral
(i.e. module-finite),
and hence $M$ is also a finitely generated $A$-module.

In this case, one can apply (\romannumeral\xref{seriesinvertible}) to give
\begin{equation}
\label{M-in-terms-of-Tor-and-A}
 [M](t) =  
\left[\sum_{i \geq 0} (-1)^i [\Tor_i^R(M,k)](t)\right] \cdot [A](t).
\end{equation}
Note that 
the sum on the right is finite, i.,e., lies
in $\Groth(k(\Theta))[t]$, because Hilbert's syzygy theorem says
$\Tor^A(M,k)$ is finite dimensional.  Note also that 
because $\Theta$ acts trivially on $A$, one has
\[
[A](t) 
= \left( \prod_{i=1}^n \frac{1}{1-t^{\deg(f_i)}} \right)  [\triv],
\]
which is an element of $\QQ(t)$ times the class $[\triv]$ of the trivial
module. Hence 
for any simple module $S$ the series counting the multiplicity of $[S]$
 has $\QQ(t)$ coefficients.

\vskip .1in
\noindent
{\sf (\romannumeral\xref{multiplicity}):}
Again it suffices to prove it for $[M](t)$, and then take $M=R$ to deduce
it for $[R](t)$.  For $[M](t)$ it is implied by equation	
\eqref{M-in-terms-of-Tor-and-A} and the comments after it,
as after forgetting the group action 
the sum $\sum_{i \geq 0} (-1)^i [\Tor_i^R(M,k)](t)$
has $\ZZ[t]$ coefficients and the pole at $t=1$ in
$[A](t)$ is the Krull dimension of $A$.
This is the same as the Krull dimension of $R$ since
$A \hookrightarrow R$ is an integral extension.
\end{proof}

\subsection{A Short Review of Brauer theory}
\label{Brauer-review}

At several points we will need facts about the Grothendieck ring
$\Groth(k(\Gamma))$ for a finite group $\Gamma$ which can be conveniently
deduced from the theory of Brauer characters. We review this theory here
and refer to \cite[Part III]{Sefive} for details.

Let $p$ denote the characteristic of the ground field $k$\/.
We say that an element $\gamma$ in $\Gamma$ is {\emphasis $p$-regular} 

if its order lies in $k^\times$. Let $m$ be the least common multiple of the
orders of all $p$-regular elements
of $\Gamma$, and let $\xi$ be a primitive $m^{th}$ root of unity in some
extension field of $k$. Then
every $p$-regular element $\gamma$ in $\Gamma$ acting on a finite-dimensional
$k(G)$-module $U$ has all its eigenvalues among the group of 
$m^{th}$ roots of unity $\mu_m(k(\xi)^\times)=\langle\xi\rangle$.
Pick a primitive complex $m^{th}$ root of unity $\hat{\xi}\in\CC$ which will
lift $\xi$, and consider the resulting homomorphism which lifts
$m^{th}$ roots of unity from $k(\xi)$ to $\CC$:
$$
\begin{aligned}
\mu_m(k(\xi)^\times) & \overset{\rm lift}{\longrightarrow} \mu_m(\CC^\times)
\\
\xi^j        &\longmapsto \hat{\xi}^j.
\end{aligned}
$$
The {\emphasis Brauer character value} $\chi_U(\gamma)$ for $\gamma$
is then defined to be the sum of the lifts of the eigenvalues of $g$ on $U$.
Furthermore the field $k(\xi)$ is a splitting field for $\Gamma$ by a
theorem of Brauer (see e.g., \cite[\sex 12.3, Theorem 24]{Sefive}).

The Brauer character $\chi_U$ of a $k(\Gamma)$-module $U$ determines the
composition factors of $U$, and this 
has several important consequences. To begin with,
the collection of restriction homomorphisms
$$
\Groth(k(\Gamma)) \longrightarrow \Groth(k\langle \gamma \rangle),
$$
where $\gamma$ ranges over all $p$-regular elements
$\gamma$, determines elements of $\Groth(k(\Gamma))$ uniquely; that is, the map
$\Groth(k(\Gamma)) \rightarrow \bigoplus_\gamma \Groth(k\langle \gamma \rangle)$ is
injective.
It implies also that whenever one has a field extension $k \hookrightarrow K$, the
map 
$$
\Groth(k(\Gamma)) \overset{\psi_{k,K}}{\longrightarrow} \Groth(K(\Gamma))
$$
that is induced by extension of scalars $U \mapsto K \otimes_k U$ is injective, since
the Brauer character of a module remains the same after extending scalars.
So to prove an equality
in $\Groth(k(\Gamma))$ it
suffices to prove the equality in $\Groth(k\langle \gamma \rangle)$
for the $p$-regular elements $\gamma$ in $\Gamma$. 

If $\gamma\in\Gamma$ is a $p$-regular element then $k\langle \gamma \rangle$
is semisimple. Over a splitting field $k(\xi)$ the simple
$k\langle \gamma \rangle$-modules $U_j$ are all $1$-dimensional and are indexed
by $j \in \ZZ/d\ZZ$, where $d$ is the order of $\gamma$,
with $\gamma$ acting as the scalar $\omega^j$ for some primitive $d^{th}$ root
of unity $\omega$ in
$k^\times$.  An element in $\Groth(k\langle \gamma \rangle)$ is 
determined by the (virtual) composition multiplicities of each $U_j$.
If this element is of the form $[U]$ for some genuine $k(\Gamma)$-module $U$,
then $[U]$ will be determined by the dimensions
$\dim_k (U \otimes_k U_j)^{\langle \gamma \rangle}$
of its 
$U_j$-isotypic components.

Observe also that if one
has a commuting action of another finite group $\Gamma'$, and one
wants to prove an equality in $\Groth(k(\Gamma \times \Gamma'))$, it suffices
to prove it
in $\Groth((k\langle \gamma \rangle  \times \Gamma'))$ for each $p$-regular
$\gamma \in \Gamma$.
Furthermore, this can be done for genuine $k(\Gamma \times \Gamma')$-modules
by proving equality in $\Groth(k(\Gamma'))$
for each 
isotypic component.

\subsection{The Case of Domains with Trivial Group Action}
\label{integral-domain-section}

We assume the notations introduced
in 
Proposition \xref{rationality-proposition},
and in addition require that the
graded $k$-algebra $R$ be an integral domain on which $\Gamma$-acts trivially.
We let $K$ be the fraction field of $R$ and recall from the discussion of
Section~\ref{Brauer-review} that extension of scalars gives an inclusion of
Grothendieck rings 
$
\psi_{k,K}: \Groth(k(\Gamma)) \rightarrow \Groth(K(\Gamma))
$. We will make various assertions about isomorphism of $K(\Gamma)$-modules,
but where these modules are in fact defined over $k$ we may also deduce a
corresponding result for $k(\Gamma)$ modules(because $\psi_{k,K}$ is injective)
which we leave to the reader to formulate.

The main result in this section is an abstract version of Theorem
\xref{ChevalleyOmnibus}, 
from which Theorem \xref{ChevalleyOmnibus} will immediately follow. 
Before we state it, we present a lemma which will be needed in the proof.

\begin{LEMMA}
\label{field-change-lemma}
Let $M$ be a finitely generated graded $R(\Gamma)$-module, where $R$ is a
commutative graded Noetherian $k$-algebra on which $\Gamma$ acts
trivially. Assume that $R$ is an integral domain and let $R'\subseteq R$ be a
graded subring over which $R$ is integral. Let the fields of fractions of $R$
and $R'$ be $K$ and $K'$, respectively.
Then the map 
\[
\varphi:  K' \times M  \longrightarrow K \otimes_R M 
\]
sending $(a,m) \in K' \times M$ to $a \otimes_R m \in K \otimes_R M$
induces an isomorphism of $K'(\Gamma)$-modules 
\[
K' \otimes_{R'} M \rightarrow   K \otimes_R M\period
\]
Note that here $K$ is regarded as a $(K',R)$-bimodule, and
may even be regarded as
a $(K'(\Gamma),R(\Gamma))$-bimodule, on which $\Gamma$ acts trivially.
\end{LEMMA}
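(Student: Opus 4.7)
The plan is to recognize both sides as localizations of $M$, at the multiplicative sets $S := R'\setminus\{0\}$ and $T := R\setminus\{0\}$ respectively, and then use integrality of $R$ over $R'$ to show that $S^{-1}M = T^{-1}M$.

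First I would verify that $\varphi$ is $R'$-balanced and $K'(\Gamma)$-equivariant, so that it induces a well-defined $K'(\Gamma)$-module homomorphism $\bar\varphi : K' \otimes_{R'} M \to K \otimes_R M$. Balancedness is immediate: for $r' \in R'$ one has $\varphi(ar',m) = ar' \otimes_R m = a \otimes_R r'm = \varphi(a,r'm)$, since $r' \in R' \subseteq R$. The $\Gamma$-equivariance is automatic because $\Gamma$ acts trivially on $K$ and $K'$ and acts on both tensor products through its action on $M$. Next I would identify the source and target with localizations: since $K' = S^{-1} R'$ and $K = T^{-1} R$, the standard localization-via-tensor-product identity gives
\[
K' \otimes_{R'} M \;\cong\; S^{-1} M, \qquad K \otimes_R M \;\cong\; T^{-1} M,
\]
and under these identifications $\bar\varphi$ is the canonical map induced by the inclusion $S \subseteq T$.

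The key step is to use the hypothesis that $R$ is integral over $R'$ to show $S^{-1}M \to T^{-1}M$ is bijective. For any $t \in T$, integrality gives a monic relation $t^n + a_{n-1} t^{n-1} + \cdots + a_1 t + a_0 = 0$ with $a_i \in R'$. Since $R$ is a domain, if $a_0 = 0$ we may factor out $t$ and shorten the relation, so we may assume $a_0 \neq 0$, i.e.\ $a_0 \in S$. Rearranging yields
\[
a_0 \;=\; -\,t\bigl(t^{n-1} + a_{n-1} t^{n-2} + \cdots + a_1\bigr),
\]
so $1/t = -a_0^{-1}(t^{n-1} + a_{n-1} t^{n-2} + \cdots + a_1)$ already lies in $S^{-1} R$. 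This single observation drives both directions: for surjectivity, any $m/t \in T^{-1}M$ equals $-a_0^{-1}(t^{n-1} + \cdots + a_1) m$, which manifestly lies in $S^{-1}M$; for injectivity, if $tam = 0$ in $M$ for some $t \in T$, then multiplying through by $t^{n-1} + a_{n-1} t^{n-2} + \cdots + a_1$ produces $a_0 \cdot am = 0$ with $a_0 \in S$, so $am/b = 0$ already in $S^{-1} M$.

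The only place where any care is needed is the choice $a_0 \neq 0$, which is exactly where we use that $R$ is an integral domain; there is no other obstacle. Combining the three steps gives the desired isomorphism of $K'(\Gamma)$-modules.
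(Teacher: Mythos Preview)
Your argument is correct, and the core idea---using a monic integral relation for $t \in R$ over $R'$ with nonzero constant term to rewrite $1/t$ with denominator in $R'\setminus\{0\}$---is exactly the same trick the paper uses. The only difference is in how injectivity is handled: the paper first checks that $\dim_{K'}(K'\otimes_{R'}M) = \dim_{K'}(K\otimes_R M)$ via a rank computation and then proves surjectivity, whereas you prove injectivity directly by the same integrality manipulation (if $tm=0$ then $a_0 m = 0$ with $a_0 \in S$). Your localization packaging is a bit cleaner since it avoids the dimension count, but the two proofs are essentially the same.
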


\begin{proof}
Note that the map $\varphi$ is $R'$-balanced simply because $R' \subset R$.
Hence it induces a well-defined map
$\varphi: K' \otimes_{R'} M \rightarrow   K \otimes_R M$,
which one can see is $K'$-linear and even a $K'(\Gamma)$-morphism.

It remains to show that $\varphi$ is a $K'$-vector space isomorphism, which is
facilitated by first observing that the $K'$-dimensions of the 
domain and range are the same, viz.,
\[
\begin{aligned}
\dim_{K'}  \left( K' \otimes_{R'} M \right) 
  &= \rank_{R'} (M) \\
  &= \rank_{R'} (R) \cdot \rank_R (M) \\ 
  &= [K:K'] \cdot \dim_K \left( K \otimes_R M \right) \\
  &= \dim_{K'} \left( K \otimes_R M \right)
\period
\end{aligned}
\]

Thus it suffices to show $\varphi$ is surjective.  For this one need
only check for any $s,r \in R$ with $r \neq 0$, and
any $m \in M$, that the decomposable tensor $\frac{s}{r} \otimes_R m$
is in the image of $\varphi$.  
For this we use that $R$ is integral over $R'$, so there is a
dependence 
\[
r^n + a_{n-1} r^{n-1} + \cdots +a_2 r^2 + a_1 r + a_0 = 0
\]
with $a_i \in R'$.  We may assume $a_0 \neq 0$, since one can divide by
$r$ in the domain $R$.  Let 
\[
m' := -(r^{n-1} + a_{n-1} r^{n-2} + \cdots a_2 r + a_1) m,
\]
which is an element of $M$, satisfying 
\[
r\cdot m' = a_0 \cdot m.
\]
Hence
\[
\frac{s}{r} \otimes_{R} m
=\frac{s}{ra_0} \otimes_{R} a_0 m
=\frac{s}{ra_0} \otimes_{R} rm'
=\frac{1}{a_0} \otimes_{R} sm' 
=\varphi\left(\frac{1}{a_0} \otimes_{R'} sm' \right)
\]
lies in the image of $\varphi$.
\end{proof}

\begin{THM}
\label{domain-with-trivial-action-theorem} Let $M$ be a finitely generated graded $R(\Gamma)$-module, where $R$ is a commutative graded connected Noetherian $k$-algebra on which $\Gamma$ acts trivially. Assume in addition that $R$ is an integral domain.
\itemize
\iitem \label{upperbound}
In $\Groth(K(\Gamma))$ we have
$$
[K\otimes_k (M\otimes_R k)]\geq  [K \otimes_R M].
$$
Furthermore, one has equality if and only if $M$ is $R$-free, in which case
the $K(\Gamma)$-module $K\otimes_R M$ has a filtration
\[
0=F_{-1}\subseteq F_0\subseteq F_1\subseteq\cdots\subseteq F_d=K\otimes_RM
\]
so that for each $j$ there is an isomorphism of $K(\Gamma)$-modules
\[
K \otimes_k  ( M \otimes_R k )_j\to F_j/F_{j-1}
\]
where $( M \otimes_R k )_j$ denote the $j^{th}$ homogeneous component of 
$M \otimes_R k $.

\iitem \label{altbound}
More generally, for any $m \geq 0$, in $\Groth(k(\Gamma))$, 
\[
 \sum_{i=0}^m(-1)^i \sum_{j \geq 0} \left[ K\otimes_k\Tor_i^R(M,k)_j \right] 
\plaincases
{
\geq  [K \otimes_R M] & if $m$ is even,\cr
\leq  [K \otimes_R M] & if $m$ is odd,\cr
}
\]
and equality holds if and only if $\hd_R(M) \leq m$, that is,
if and only if $\Tor_i^R(M,k)$ vanishes for $i > m$.

\iitem \label{limitcase}
Even if $\hd_R (M)$ is not finite, the formal power series
\[
\sum\limits_{i \geq 0} (-1)^i \,\, \left[ K \tensor_k \Tor_i^{R} (M,k) \right](t)
\]
of $\Groth(K(\Gamma))[[t]]$ 
has the property that,
for each simple $K(\Gamma)$-module $S$, the power series in $t$ giving
the coefficient of $[S]$ lies in $\QQ(t)$.
Moreover,
$t = 1$ is a regular value for these rational functions, and
\[
\sum_{i \geq 0} (-1)^i \,\, \left[ K \tensor_k \Tor_i^{R}(M,k) \right](t)\Big\res_{t=1}
= [K \tensor_R M] \quad \text{ in }\Groth(K(\Gamma)).
\]
\altenditemize
\end{THM}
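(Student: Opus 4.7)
The strategy exploits two properties of the integral domain $R$ with fraction field $K = \Frac(R)$: the functor $K \otimes_R -$ is exact (flatness of a localization), and every submodule of a free $R$-module is torsion-free, hence killed by $K \otimes_R -$ only if it vanishes. For Part~(\romannumeral\xref{upperbound}), use Proposition~\xref{rationality-proposition}(\romannumeral\xref{existresol}) to fix a $k(\Gamma)$-equivariant graded $R$-free resolution $F_\bullet \to M$ with $F_0 = R \otimes_k U_0$, arranged so that $U_0 \iso M \otimes_R k$ as $k(\Gamma)$-modules (possible directly via Maschke's theorem in the semisimple case, and in general by the Brauer-theoretic reduction of Section~\xref{Brauer-review}). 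Set $N := \ker(F_0 \to M)$. Flatness of $K$ applied to $0 \to N \to F_0 \to M \to 0$ gives in $\Groth(K(\Gamma))$
\[
[K \otimes_k (M \otimes_R k)] \;=\; [K \otimes_R F_0] \;=\; [K \otimes_R M] + [K \otimes_R N],
\]
whence the inequality, since $[K \otimes_R N] \ge 0$. Equality forces $K \otimes_R N = 0$, and so $N = 0$ by torsion-freeness, i.e., $M \iso F_0$ is $R$-free. In the free case, $M = R \otimes_k V$ with $V := M \otimes_R k$, and the degree filtration of the graded $k(\Gamma)$-module $V$ transports to the desired $K(\Gamma)$-filtration of $K \otimes_R M = K \otimes_k V$ with graded pieces $K \otimes_k V_j$.

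\emph{Part~(\romannumeral\xref{altbound}).} Iterate (\romannumeral\xref{upperbound}) along the syzygies $N_i := \ker(d_{i-1}) \subseteq F_{i-1}$. Each short exact sequence $0 \to N_{i+1} \to F_i \to N_i \to 0$ stays exact under the flat functor $K \otimes_R -$, and telescoping gives
\[
[K \otimes_R M] \;=\; \sum_{i=0}^m (-1)^i [K \otimes_R F_i] + (-1)^{m+1} [K \otimes_R N_{m+1}].
\]
In the semisimple case, Proposition~\xref{rationality-proposition}(\romannumeral\xref{semisimplecase}) allows the resolution to be chosen minimal, so $[K \otimes_R F_i] = [K \otimes_k \Tor_i^R(M,k)]$; the identity then yields the claimed (in)equality with error $(-1)^{m+1}[K \otimes_R N_{m+1}]$ of genuine-module type. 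Torsion-freeness of $N_{m+1} \subseteq F_m$ makes this error vanish iff $N_{m+1} = 0$, which by Proposition~\xref{rationality-proposition}(\romannumeral\xref{hdfinite}) is equivalent to $\hd_R(M) \le m$, i.e., to $\Tor_i^R(M,k) = 0$ for $i > m$. The modular case is reduced to the semisimple one by restricting to $p$-regular cyclic subgroups $\langle\gamma\rangle \subseteq \Gamma$, over which $k\langle\gamma\rangle$ is semisimple, as in Section~\xref{Brauer-review}.

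\emph{Part~(\romannumeral\xref{limitcase}).} Applying the scalar-extension map $\psi_{k,K}$ coefficient-wise to Proposition~\xref{rationality-proposition}(\romannumeral\xref{seriesinvertible}), and using that $[R](t) = h_R(t)[\triv]$ (from the trivial $\Gamma$-action on $R$), one obtains in $\Groth(K(\Gamma))[[t]]$
\[
\sum_{i \ge 0}(-1)^i [K \otimes_k \Tor_i^R(M,k)](t) \;=\; \frac{[K \otimes_k M](t)}{h_R(t)}.
\]
Proposition~\xref{rationality-proposition}(\romannumeral\xref{allseries}, \romannumeral\xref{multiplicity}) makes each $[S]$-coefficient of the numerator a rational function with pole of order at most $d := \dim_{\mathrm{Krull}} R$ at $t=1$, while $h_R(t)$ has pole of order exactly $d$; hence the quotient is regular at $t=1$. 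To identify the value there with $[K \otimes_R M]$, note that both sides are additive on short exact sequences of finitely generated graded $R(\Gamma)$-modules (the left via additivity of the $\Tor$ Euler characteristic from the long exact sequence, the right via flatness of $K$). A d\'evissage through a graded-prime filtration of $M$ reduces the check to two elementary cases: $M = R$, where both sides equal $[K]$ in $\Groth(K(\Gamma))$; and $M = R/\mathfrak{p}$ for a nonzero graded prime $\mathfrak{p}$, where the leading $(1-t)^{-d}$ coefficient of the numerator vanishes (since $\dim R/\mathfrak{p} < d$) and the right-hand side is $0$ (since $R/\mathfrak{p}$ is $R$-torsion). Brauer reduction again handles nontrivial $k(\Gamma)$-structure on the subquotients.

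\emph{Main obstacle.} The principal technical difficulty is the modular case: absent $k(\Gamma)$-equivariant splittings of $M \twoheadrightarrow M \otimes_R k$ and equivariant minimal resolutions, one must invoke the Brauer-theoretic passage to $p$-regular cyclic subgroups of $\Gamma$ to certify the genuine-module nature of the error terms in Part~(\romannumeral\xref{altbound}) and the equivariant evaluation at $t=1$ in Part~(\romannumeral\xref{limitcase}).
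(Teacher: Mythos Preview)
Your proofs of (\romannumeral\xref{upperbound}) and (\romannumeral\xref{altbound}) follow the paper's argument closely; the only slip is the assertion ``in the free case, $M = R \otimes_k V$ with $V = M\otimes_R k$.'' This is false in general: even with $\Gamma$ acting trivially on $R$, a free graded $R(\Gamma)$-module can have $\gamma(e_\alpha) = \sum_\beta r_{\alpha\beta}(\gamma)e_\beta$ with $r_{\alpha\beta}(\gamma)\in R_{>0}$ when $\deg e_\beta < \deg e_\alpha$ (cf.\ the remark after Proposition~\xref{finite-hd-proposition}). What is true is that the filtration of $M$ by $\bigoplus_{\deg e_\beta \le j} R e_\beta$ is $\Gamma$-stable with associated graded $R\otimes_k V$, and this is exactly how the paper proves the filtration statement---directly, without any Brauer reduction for part~(\romannumeral\xref{upperbound}).

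Your argument for (\romannumeral\xref{limitcase}) is genuinely different from the paper's. The paper passes to a Noether normalization $R'\subseteq R$, reduces to the polynomial case via Hilbert's syzygy theorem, and then invokes Lemma~\xref{field-change-lemma} to identify $K'\otimes_{R'}M$ with $K\otimes_R M$ as $K'(\Gamma)$-modules. Your d\'evissage via a graded prime filtration is an attractive alternative and the additivity and pole-counting steps are fine, but there is a gap: a prime filtration of $M$ is a filtration by graded $R$-submodules, not by $R(\Gamma)$-submodules, so additivity of the two functionals on $R(\Gamma)$-short exact sequences does not apply to its subquotients. Your Brauer reduction replaces $\Gamma$ by a $p$-regular cyclic $\langle\gamma\rangle$, after which the isotypic components $e_S M$ are $R$-submodules and one can prime-filter each separately; but unless $k$ already splits $\langle\gamma\rangle$ (so that each simple $S$ is one-dimensional and $\gamma$ acts on $e_S M$ by a scalar), the subquotients $R/\mathfrak p$ of the $R$-prime filtration of $e_S M$ carry no canonical $\langle\gamma\rangle$-action. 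You would need an additional scalar-extension argument (and then control what happens to the domain $R$ under $-\otimes_k k'$) to close this. The paper's Noether-normalization route sidesteps this issue entirely.
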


Observe in the statement of this result that the action of $R$ on $K$ coming from the 
inclusion $R\subseteq K$ is not the same as the action coming from the 
composite homomorphism $R\to k\hookrightarrow K$, 
thus distinguishing $K\otimes_R M$ from $K \otimes_k  ( M\otimes_R k )$.

\begin{proof} We begin by proving (\romannumeral\xref{upperbound}).
Let $\{e_\alpha\}$ be a minimal homogeneous $R$-spanning subset for $M$.
Then $\{e_\alpha \otimes 1\}$ forms a $k$-basis for $M \otimes_R k$,
by the graded version of Nakayama's Lemma.  Hence 
$\{ 1 \otimes (e_\alpha  \otimes 1) \}$ forms a $K$-basis for 
$K \otimes_k \left( M \otimes_R k \right)$.

Also $\{ 1 \otimes e_\alpha\}$ is a $K$-spanning set 
for $K \otimes_R M$.  Filter $K \otimes_R M$ by letting $F_{j}$ be the
$K$-span of $1 \otimes e_\alpha$ for which $e_\alpha$ has degree at most $j$.
The module $K \otimes_k \left( M \otimes_R k \right)$
has a direct sum decomposition coming from its inherited grading, and there is
a composite mapping
\[
\altplainmatrix	
{
\left( K \otimes_k \left( M \otimes_R k \right) \right)_j 
         & \rightarrow & F_j               &\rightarrow   & F_j /F_{j-1} \cr
\cr
1 \otimes (e_\alpha  \otimes 1) 
         & \mapsto     & 1 \otimes e_\alpha & \mapsto 
		& \overline{1 \otimes e_\alpha}\cr
}
\leqno{\mbox{defined by}}
\]
where $e_\alpha$ is assumed to have degree exactly $j$. This composite mapping is a surjection of $K(\Gamma)$-modules.

These surjections show the inequality asserted in (i). One has equality
if and only if all these surjections are isomorphisms, that is, if and only if the 
$\{ 1 \otimes e_\alpha\}$ are a $K$-basis for $K \otimes_R M$, 
which happens if and only if they are  $K$-linearly independent.
This in turn happens if and only if the $\{e_\alpha\}$ are $R$-linearly independent, and hence
an $R$-basis for $M$, so that $M$ is free over $R$.

We turn next to the proof of (\romannumeral\xref{altbound}) and
(\romannumeral\xref{limitcase}).
In both of these proofs, it is convenient to reduce to the case where
$|\Gamma|$ lies in $k^\times$ and hence $k(\Gamma)$ is semisimple:
\comment{I find this trick amazingly clever!}
Recall from Section \ref{Brauer-review}
that virtual modules in $\Groth(k(\Gamma))$ are determined by their
restrictions to the cyclic subgroups
generated by $p$-regular elements $\gamma \in \Gamma$, so one may replace
$\Gamma$ with $\langle \gamma \rangle$ without loss of generality.

For the proof of (\romannumeral\xref{altbound}),
semisimplicity of $k(\Gamma)$ allows us to 
write down the first $m$ steps in a minimal $R$-free resolution of $M$ as in
Proposition \ref{rationality-proposition} (\romannumeral\xref{semisimplecase}),
so that all differentials are $k(\Gamma)$-module maps. Let $L$ denote the
kernel after the $m^{th}$ stage, so that one has the exact sequence
\[
0 \rightarrow L \rightarrow
F_m \rightarrow F_{m-1} \rightarrow \cdots
\rightarrow F_1 \rightarrow F_0 \rightarrow M \rightarrow 0.
\]
Applying the functor $K \otimes_R (-)$ is the same as a localization, and
hence gives rise to an exact sequence, whose $i^{th}$ term for
$i=0,1,\ldots,m$ looks like
\[
K \otimes_R F_i \,\, \cong \,\, K \otimes_R R \otimes_k (F_i/R_+F_i) \,\, 
	\cong \,\,  K \otimes_k \Tor_i^R(M,k)
\]
due to minimality of the resolution. This shows that
\[
(-1)^m \left(
\sum_{i=0}^m (-1)^i \,\, [ K \otimes_k \Tor^R_i(M,k) ] - [K \otimes_R M]
\right)
= [K \otimes_R L] \geq 0 
\]
in $K(\Gamma)$\/,
which gives the inequality in $\Groth(K(\Gamma))$ asserted in
(\romannumeral\xref{altbound}), with equality if and only if
$K \otimes_R L = 0$.  Since $L$ is a submodule of the free $R$-module $F_m$,
it is torsion-free
as an $R$-module and hence $K \otimes_R L = 0$ if and only if $L=0$.
Minimality of the resolution then 
shows that vanishing of $L$ (which is sometimes called the $(m+1)^{st}$
{\it syzygy module for} $M$) is equivalent to
$\hd_R(M) \leq m$.

For the proof of (\romannumeral\xref{limitcase}),
we note by Hilbert's syzygy theorem that it holds when $R$ is a polynomial algebra
as a special case of (\romannumeral\xref{altbound}).

To deduce the general case of (\romannumeral\xref{limitcase}),
extend the field $k$ if necessary in order to pick a graded Noether
normalization $R' \subseteq R$, that is a graded polynomial subalgebra $R'$
over which $R$ is module-finite, and let
$K' \subseteq K$ be the associated extension of fraction fields with degree
$[K:K']$.  We will take advantage of the injective ring homomorphisms from
Section~\ref{Brauer-review}
$$
\begin{aligned}
\Groth(k(\Gamma)) & \overset{\psi_{k,K'}}{\hookrightarrow}
    & \Groth(K'(\Gamma)) & \overset{\psi_{K',K}}{\hookrightarrow} & \Groth(K'(\Gamma))\\
\Groth(k(\Gamma))[[t]] & \overset{\psi_{k,K'}}{\hookrightarrow}
    & \Groth(K'(\Gamma))[[t]] & \overset{\psi_{K',K}}{\hookrightarrow} 
    & \Groth(K'(\Gamma))[[t]].\\
\end{aligned}
$$
that arise by extension of scalars in each case.
Applying Proposition \ref{rationality-proposition}
(\romannumeral\xref{seriesinvertible}) twice, one has in $\Groth(k(\Gamma))[[t]]$ that
$$
\begin{aligned}
\sum_{i \geq 0} (-1)^i [\Tor_i^R(M,k)](t) 
   & = \frac{ [M](t) }{ [R](t) } 
     = \frac{ [M](t) }{ [R'](t) } \cdot \frac{ [R'](t) }{ [R](t) }\\
   & = \left( \sum_{i \geq 0} (-1)^i [\Tor_i^{R'}(M,k)](t) \right) 
                \cdot \frac{ [R'](t) }{ [R](t) }.
\end{aligned}
$$
Applying the map $\psi_{k,K'}$, one concludes that in $\Groth(K'(\Gamma))[[t]]$ one has
\begin{equation}
\label{K'-power-series-equation}
\begin{aligned}
\sum_{i \geq 0} &(-1)^i [K' \tensor_k \Tor_i^R(M,k)](t) \\
   & = \left( \sum_{i \geq 0} (-1)^i [K' \tensor_k \Tor_i^{R'}(M,k)](t) \right) 
                \cdot \psi_{k,K'}\left( \frac{ [R'](t) }{ [R](t) }\right).
\end{aligned}
\end{equation}
The first factor on the right in 
\eqref{K'-power-series-equation} has $t=1$ as a regular value
in $\Groth(K'(\Gamma))$,
because $R'$ is a polynomial algebra, and the case already proven shows that 
the value taken there is $[K' \tensor_{R'} M]$.
For the second factor on the right in \eqref{K'-power-series-equation},
note that both $R', R$ carry trivial $\Gamma$-actions, 
and hence either \cite[Lemma 2.4.1 (iii) on page 20]{Benson} 
or \cite[Prop. 5.5.2 on page 123]{Smith} shows that this
factor also has $t=1$ as a regular value in $\Groth(K'(\Gamma))$,
taking the value $\frac{1}{[K:K']}[\triv]$. 
Consequently, the left side of \eqref{K'-power-series-equation} 
has $t=1$ as a regular value in $\Groth(K'(\Gamma))$ 
with evaluation
$$
\begin{aligned}
\sum_{i \geq 0}  (-1)^i [K' \tensor_k \Tor_i^R(M,k)](t) \Big\res_{t=1} 
    &= \frac{1}{[K:K']} [K' \tensor_{R'} M] \\
    &= \frac{1}{[K:K']} [K \tensor_{R} M]\comma \\
\end{aligned}
$$
where the second equality uses Lemma~\ref{field-change-lemma}.
Applying the map $\psi_{K',K}$, one concludes that 
the element 
$$
\sum_{i \geq 0} (-1)^i [K \tensor_k \Tor_i^R(M,k)](t)
$$
in $\Groth(K(\Gamma))[[t]]$ also has $t=1$ as a regular value in
$\Groth(K(\Gamma))$ 
and evaluating gives
$$
\begin{aligned}
\sum_{i \geq 0} (-1)^i [K \tensor_k \Tor_i^R(M,k)](t) \Big\res_{t=1} 
   & = \frac{1}{[K:K']} \psi_{K',K} [K \tensor_{R} M] \\
   & = \frac{1}{[K:K']} [K \tensor_{K'} K \tensor_{R} M] \\
   & = [K \tensor_{R} M].
\end{aligned}
$$
Here the last equality uses the fact that both $K', K$ carry
trivial $\Gamma$-actions, and $K\otimes_{K'}K\cong K^{[K:K']}$.
\end{proof}

\REM
Simple examples
show that the assumption that $\Gamma$ act trivially on $R$ in 
Theorem~\ref{domain-with-trivial-action-theorem}
cannot in general be removed.  
For example, consider the following inclusions of algebras.
\[
\underbrace{\CC[x^2, y^2]}_{\displaystyle R'}
\subset
\underbrace{\CC[x^2, xy, y^2] }_{\displaystyle R}
\subset
\underbrace{\CC[x, y]}_{\displaystyle M}
\]
Let $\Gamma=\ZZ/4\ZZ$ act compatibly on $R', R, M$ via the scalar substitution
given by $x, y \mapsto ix, iy$ (where $i^2=-1$ in $\CC$), 
so that their $d^{th}$ homogeneous components are scaled
by $i^d$.  Then $\Groth(k(\Gamma)) \cong \ZZ[\alpha]/(\alpha^4-1)$,
where $\alpha$ represents the class of the $1$-dimensional $k(\Gamma)$-module
that is scaled by $i$. In $\Groth(k(\Gamma))[[t]]$ there are equalities
\[
\fieldalign
{
\field{-1}
[\Tor^R(M,k)](t) = \frac{[M](t)}{[R](t)} \quad
	\text{( by Proposition~\ref{rationality-proposition}(iv) above)}\cr
 \field{1}
= \frac{[R'](t) [\Tor^{R'}(M,k)](t)} {[R'](t) [\Tor^{R'}(R,k)](t)} \quad 
\text{( again by Proposition~\ref{rationality-proposition}(iv) above)}\cr
\field{3}
= \frac{(1+\alpha t)^2 }{1+\alpha^2 t^2} \cr
\field{5}
= 1 + \frac{2\alpha t}{1+ \alpha^2 t^2} \cr
\field{7}
= 1 + \alpha \cdot \frac{2t}{1-t^4} + \alpha^2 \cdot 0
	+ \alpha^3 \cdot \frac{-2t^3}{1-t^4} \cr
}
\]
and the last of these does not have $t=1$ as a regular value.
Note that here $R$ is not fixed by $\Gamma$.

\subsection{Proof of Theorem~\ref{ChevalleyOmnibus}}
\label{proof-of-omnibus-theorem-section}
To prove Theorem~\ref{ChevalleyOmnibus}, one  applies
Theorem~\ref{domain-with-trivial-action-theorem} in the special case where
$R=k[V]^G$.  Then $K$ is its fraction field $k(V)^G$, and one has the
$R$-module $M=(U \otimes_k k[V])^G$, with action by $\Gamma$ coming from the
fact that $U$ is a $(k(\Gamma),k(G))$-bimodule.
What remains to show in this situation is that
\[
[ K \otimes_R M ] = [ K \otimes_k U ] \in \Groth(K(\Gamma))
\period
\]
In fact, these two $K(\Gamma)$-modules are {\it isomorphic}
as the following string of equalities and isomorphisms
proves.
$$
\begin{aligned}
K \otimes_R M 
&\overset{(1)}{=} k(V)^G \otimes_{k[V]^G} (U \otimes_k k[V])^G  \\
&\overset{(2)}{\cong} (U \otimes_k k(V))^G\\
&\overset{(3)}{\cong} (U\otimes_k KG)^G \\
&\overset{(4)}{\cong} K \otimes_k (U \otimes_k k(G))^G\\
&\overset{(5)}{\cong} K \otimes_k U.
\end{aligned}
$$
These may be justified as follows.
Equality $\overset{(1)}{=}$ is just a definition.
The isomorphism $\overset{(2)}{\cong}$ is given by the map
$$
\begin{aligned}
k(V)^G \otimes_{k[V]^G} (U \otimes_k k[V])^G &\rightarrow
	& (U \otimes_k k(V))^G\\
\frac{g}{h} \otimes \sum_i (u_i \otimes f_i) &\longmapsto
	& \sum_i u_i \otimes f
_i\frac{g}{h} .
\end{aligned}
$$
whose inverse sends
$$
\sum_i u_i\otimes  \frac{g_i}{h_i} 
=\sum_i u_i\otimes  \frac{\hat g_i}{h} 
\longmapsto 
\frac1h\otimes\left(\sum_i u_i\otimes \hat g_i\right)
$$
where $\hat g_i, h$ are chosen so that $\frac{\hat g_i}h = \frac{g_i}{h_i}$ 
and $h$ is $G$-invariant (e.g., choose $h$ to be the product of the finitely
many images $g(h_i)$ as $i$ varies and as $g$ varies through the finite
group $G$). The isomorphism $\overset{(3)}{\cong}$ comes from
the Normal Basis Theorem \cite[Theorem VIII.13.1]{Lang} applied to the Galois
extension
\[
K=k(V)^G \subset k(V),
\]
which asserts $k(V) \cong K(G)$ as $k(G)$-modules.
The isomorphism $\overset{(4)}{\cong}$ comes from the fact that $G$ acts
trivially on $K=k(V)^G$.
The isomorphism $\overset{(5)}{\cong}$ comes from the $k$-linear map defined by
$$
\begin{aligned}
U  &\rightarrow & (U\otimes_k  k(G))^G \\
u    &\longmapsto &  \sum_{g \in G} ug^{-1} \otimes t_g 
\end{aligned}
$$
where $t_g$ is the $k$-basis element in $k(G)$ indexed by $g$. 

This completes the proof of Theorem~\ref{ChevalleyOmnibus}.\qed

\subsection{Remarks on Group Cohomology and other related Constructions}
\label{higher-cohomology-section}

We reinterpret some of the foregoing results and comment
on their implications for the higher group cohomology
$H^j(G,U \otimes_k k[V])$.

Given a finitely generated graded integral domain $R$ over the field $k$
and a finite group $\Gamma$, we let
$\Gamma$ act trivially on $R$ and denote by
{\bf $R(\Gamma)$-mod} the abelian
category of finitely generated non-negatively graded $R(\Gamma)$-modules $M$.
The assignment $M \mapsto \Tor^R(M,k)$ is a functor from the category
{\bf $R(\Gamma)$-mod} to the category {\bf bigraded-$k(\Gamma)$-mod}
of bigraded-$k(\Gamma)$-modules that are finite-dimensional in each bidegree.
One can compose this with the forgetful functor
{\bf bigraded-$k(\Gamma)$-mod} $\rightarrow$ {\bf graded-$k(\Gamma)$-mod}
that forgets the internal grading by taking the direct sum over it,
and leaves the homological grading.
Theorem \ref{domain-with-trivial-action-theorem}(iii)
asserts that the composite of these two functors followed by taking the
alternating sum over the homological grading yields
a well-defined homomorphism of Grothendieck rings
$\Groth(R(\Gamma)) \rightarrow \Groth(k(\Gamma))$,
which sends $[M] \mapsto \psi^{-1}[K \otimes_R M]$, where $K$ is the field of
fractions of $R$ and $\psi$ is induced from the inclusion $k \hra K$\/.

Next we place ourselves in the context of
Theorem \ref{ChevalleyOmnibus} (\romannumeral\xref{omnithree})
where $R = k[V]^G$ and
we are considering
the {\it relative invariants} functor in the form
{\bf $k(\Gamma \times G)$-mod} $\rightarrow$ {\bf $R(\Gamma)$-mod}
that sends $U \mapsto (U \otimes_k k[V])^G$.  Then
Theorem~\ref{ChevalleyOmnibus} (\romannumeral\xref{omnithree})
says that if we follow this by the composite functor described above,
we get a well-defined homomorphism
$\Groth(k(\Gamma \times G)) \rightarrow \Groth(k(\Gamma))$
which sends $[U] \mapsto [U]$, that is, it coincides with the
restriction homomorphism that simply forgets the $k(G)$-structure.

This is a bit surprising, as the relative invariants functor 
is not exact;  it is the case $j=0$ of a family of (generally nontrivial) group
cohomology functors 
$$
U \mapsto M_j(U):=H^j(G, U\otimes_k k[V])
$$
for $j \geq 0$, which measure the inexactness of taking relative invariants.
In fact, it is not hard to show using the same ideas as above, that for any
strictly positive $j$, if one follows $M_j(-)$ by the composite functor and
takes an alternating sum (i.e., Euler characteristic) as 
discussed above, the result induces the {\it zero} homomorphism
$\Groth(k(\Gamma \times G)) \rightarrow \Groth(k(\Gamma))$.
That is, one has the following conclusion.

\begin{THM}
\label{higher-cohomology}
Let $G$ be a finite subgroup of $GL(V)$, let $R=k[V]^G$ and let $U$ be any
finite-dimensional $k(G \times \Gamma)$-module.
For all $j \geq 0$, let  
$$
M_j(U):=H^j(G, U\otimes_k k[V])
$$
be the $j^{th}$ cohomology group of $G$ with
coefficients in $U\otimes_k k[V]$, considered as a $R(\Gamma)$-module.
Then for strictly positive $j$ one has
$$
\sum_{i \geq 0} (-1)^i \,\, [\Tor_i^R(M_j(U),k)](t) \Bigg\res_{t=1} = 0
$$
in $\Groth(k(\Gamma))$
\end{THM}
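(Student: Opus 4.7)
My plan is to reduce Theorem~\ref{higher-cohomology} to Theorem~\ref{domain-with-trivial-action-theorem}(iii), and then to kill the resulting class by means of the Normal Basis Theorem. First, I would verify that $M_j(U) = H^j(G, U \otimes_k k[V])$ is a finitely generated, non-negatively graded $R(\Gamma)$-module, where $R = k[V]^G$ and $\Gamma$ acts trivially on $R$. Indeed, $U \otimes_k k[V]$ is finitely generated over $R$ since $k[V]$ is module-finite over $R$, and $M_j(U)$ arises as a subquotient of the corresponding finitely generated $R$-modules in any computation from a $k(G)$-free resolution of the trivial module. Because $\Gamma$ commutes with $G$ and acts only on $U$, the $\Gamma$-action descends to $M_j(U)$ and commutes with the $R$-action. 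Theorem~\ref{domain-with-trivial-action-theorem}(iii) then gives
\[
\sum_{i \geq 0}(-1)^i [\Tor_i^R(M_j(U), k)](t)\Bigg\res_{t=1} = \psi_{k,K}^{-1}[K \otimes_R M_j(U)]
\]
in $\Groth(k(\Gamma))$, where $K = k(V)^G$, and reduces the problem to showing $K \otimes_R M_j(U) = 0$ as a $K(\Gamma)$-module for each $j \geq 1$.

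For this step I would exploit the flatness of $K$ over $R$ to commute localization past group cohomology:
\[
K \otimes_R M_j(U) \iso H^j\!\bigl(G,\, U \otimes_k (K \otimes_R k[V])\bigr).
\]
The identification $K \otimes_R k[V] = k(V)$ is immediate, because every nonzero $f \in k[V]$ divides its $G$-invariant norm $\prod_{g \in G} g(f) \in R$, so inverting the nonzero elements of $R$ inside $k[V]$ already inverts every nonzero element of $k[V]$.

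The final step is to invoke the Normal Basis Theorem, exactly as in the isomorphism $\overset{(3)}{\iso}$ appearing in the proof of Theorem~\ref{ChevalleyOmnibus}, which furnishes a $k(G)$-module isomorphism $k(V) \iso K \otimes_k k(G)$ with $G$ acting by the regular representation on $k(G)$ and trivially on $K$. This yields $U \otimes_k k(V) \iso K \otimes_k (U \otimes_k k(G))$ as $k(G \times \Gamma)$-modules, and the diagonal $k(G)$-module $U \otimes_k k(G)$ is isomorphic to $\Ind_1^G U$ via $u \otimes t_g \mapsto gu \otimes t_g$; this module is $k(G)$-free and therefore cohomologically trivial. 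Hence $H^j(G, U \otimes_k k(V)) = 0$ for every $j \geq 1$, completing the argument. The main point requiring care is the $\Gamma$-action through the chain of isomorphisms, but since $\Gamma$ commutes with $G$ and acts only on the $U$ factor throughout, each identification is automatically $\Gamma$-equivariant; in this sense the entire argument is a cohomological reprise of the Normal Basis trick already used to prove Theorem~\ref{ChevalleyOmnibus}.
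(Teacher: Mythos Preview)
Your proposal is correct and follows essentially the same route as the paper: apply Theorem~\ref{domain-with-trivial-action-theorem}(iii) to reduce to showing $K \otimes_R M_j(U)=0$, commute the flat localization $K \otimes_R (-)$ past $H^j(G,-)$ (the paper's Proposition~\ref{exact-functors-commute-with-cohomology}), identify $K \otimes_R k[V] \cong k(V)$, and then use the Normal Basis Theorem together with the freeness of $U \otimes_k k(G)$ as a $k(G)$-module to kill the higher cohomology. Your write-up even fills in a couple of details (the norm argument for $K \otimes_R k[V]=k(V)$, the explicit isomorphism $U\otimes_k k(G)\cong \Ind_1^G U$) that the paper leaves implicit.
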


Before proving this, we quote a standard fact about the behavior of group
cohomology under change of ground rings,	
which will also be of use further on in the proof of
Proposition \ref{completion-fixed-points-commute}.

\begin{PROP}
\label{exact-functors-commute-with-cohomology}
Let $G$ be a finite group and let $R\to S$ be a homomorphism of commutative
rings which is flat, so that $M\to S\otimes_R M$ is an exact functor from
$R$-modules to $S$-modules. Then for any $R(G)$-module $M$, one has
$$
S\otimes_R H^j(G,M)\cong H^j(G,S\otimes_R M)
$$ 
as $S$-modules.
\end{PROP}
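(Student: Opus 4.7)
The plan is to compute both sides using a single projective resolution of the trivial module and then exploit flatness to commute base change with cohomology. The crucial technical point, which relies on $G$ being finite, is that one can arrange every term of the resolution to be \emph{finitely generated} free, so that $\Hom$ out of it commutes with the flat base change $R\to S$.

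First I would fix a resolution $P_\bullet\to R$ of the trivial $R(G)$-module $R$ by finitely generated free $R(G)$-modules, say $P_i\cong R(G)^{n_i}$. Such a resolution exists because $G$ is finite; one can take the bar resolution over $R$, whose $i^{th}$ term is free of rank $|G|^i$. Since $R\to S$ is flat, the base-changed complex $S\otimes_R P_\bullet\to S\otimes_R R = S$ remains exact, and each term $S\otimes_R P_i\cong S(G)^{n_i}$ is a finitely generated free $S(G)$-module. Hence both $H^j(G,M)$ and $H^j(G,S\otimes_R M)$ may be computed via these two resolutions.

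Next I would identify the relevant cochain complexes. Using finite generation, there is a canonical $R$-linear isomorphism
\[
\Hom_{R(G)}(P_i,M)\cong \Hom_{R(G)}(R(G)^{n_i},M)\cong M^{n_i},
\]
so applying $S\otimes_R-$ and commuting the tensor product with the \emph{finite} direct sum gives $(S\otimes_R M)^{n_i}$. On the other hand
\[
\Hom_{S(G)}(S\otimes_R P_i,\,S\otimes_R M)\cong \Hom_{S(G)}(S(G)^{n_i},\,S\otimes_R M)\cong (S\otimes_R M)^{n_i}.
\]
The differentials in both complexes are induced from those of $P_\bullet$ in the same way, so the complexes are naturally isomorphic as complexes of $S$-modules; the point is that $\Hom$ out of a finitely generated free module commutes with any base change. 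Since $R\to S$ is flat, $S\otimes_R-$ is exact and so commutes with cohomology. Putting this together,
\[
S\otimes_R H^j(G,M)\cong H^j\bigl(S\otimes_R \Hom_{R(G)}(P_\bullet,M)\bigr)\cong H^j\bigl(\Hom_{S(G)}(S\otimes_R P_\bullet,\,S\otimes_R M)\bigr)\cong H^j(G,S\otimes_R M)
\]
as $S$-modules, which is the desired isomorphism.

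There is no serious obstacle once one commits to this resolution-based approach; the only thing to be careful about is that one cannot allow the resolution to have infinitely generated terms (where $S\otimes_R \prod M\neq \prod S\otimes_R M$), and this is exactly what finiteness of $G$ gives for free via the bar resolution. A minor verification one should include is the naturality/$G$-equivariance of the isomorphisms above, which follows because all the identifications are given by evaluation on the standard bases of the free modules $R(G)^{n_i}$ and $S(G)^{n_i}$.
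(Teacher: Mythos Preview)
Your argument is correct: taking the bar resolution (whose terms are free of finite rank over $R(G)$ because $G$ is finite), base-changing along the flat map $R\to S$, and using that $\Hom$ out of a finitely generated free module commutes with base change is exactly the standard proof. The paper does not actually prove this proposition but simply cites \cite[Sec.~10.3, Prop.~7]{Gruenberg}, so your write-up supplies the argument the reference contains; there is nothing further to compare.
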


\begin{proof}
See  \cite[Sec. 10.3, Prop. 7]{Gruenberg}.
\end{proof}

\begin{proofof}[Theorem~\ref{higher-cohomology}]
Applying Theorem~\ref{domain-with-trivial-action-theorem}(iii), and the remarks
in Section~\ref{Brauer-review} concerning extension of scalars, it suffices for
us to show that 
$$
K \otimes_R H^j(G,U\otimes_k k[V])=0 \text{ for }j > 0,
$$ 
where $K$ is
the field of fractions of $R$. Note that $K \otimes_R (-)$ is exact, because
it is a localization, so Proposition~\ref{exact-functors-commute-with-cohomology}
implies the first isomorphism in the following string
of isomorphisms and equalities.
$$
\begin{aligned}
K \otimes_R H^j(G,U\otimes_k k[V]) &\cong H^j(G, K \otimes_R (U \otimes_k k[V])) \\
  & \cong H^j(G, U \otimes_k k(V)) \\
  & \cong  H^j(G,U \otimes_k  KG) \\
  & \cong H^j(G, K \otimes_k (U \otimes_k k(G))) \\
  & = 0
\end{aligned}
$$
The equality on the second line is by definition, while the next three
isomorphisms appeared
as $\overset{(2)}{\cong}, \overset{(3)}{\cong}, \overset{(4)}{\cong}$
in Section~\ref{proof-of-omnibus-theorem-section}.
The last vanishing assertion comes from the
fact that $U \otimes_k k(G) $ is always free as a $k(G)$-module,
and hence $K \otimes_k (U \otimes_k k(G)))$ is free as a $K(G)$-module,
so its higher cohomology will vanish.
\end{proofof}

We close this section by noting
that whereas we have been dealing with the fixed points of
$G$ and its derived functors, we 
could instead have worked
with the fixed {\it quotients} by the
action of $G$. Specifically, the same arguments that we have used in the proof
of Theorem~\ref{ChevalleyOmnibus} can also be used if we replace $M$ by 
$M':= U\otimes_{k(G)}k[V]$, namely the fixed quotient of $G$ acting on 
$U\otimes_k k[V]$ rather than the fixed points which we have used before. Of course,
when $|G|$ is invertible in $k$ these constructions are isomorphic, and so they
are both valid interpretations of the notion of relative invariants in
the modular case.
With this definition of $M'$ the crucial chain of equations in
the proof of Theorem~\ref{ChevalleyOmnibus} becomes
$$
\begin{aligned}
K \otimes_R M'
&= k(V)^G \otimes_{k[V]^G} (U \otimes_{k(G)} k[V])  \\
&=U \otimes_{k(G)} (k[V]\otimes_{k[V]^G}k(V)^G)\\
&\cong (U \otimes_{k(G)} k(V))\\
&\cong (U\otimes_{k(G)} KG) \\
&\cong K \otimes_k (U \otimes_{k(G)} k(G))\\
&\cong K \otimes_k U
\end{aligned}
$$
and this shows that it is valid to replace $M$ by $M'$ in the statement of
Theorem~\ref{ChevalleyOmnibus}.

\section{Proof of Theorems~\ref{PolynomialInvariantsTheorem} and \ref{maintheorem}}
\label{scheme-section}

Let us recall the setting and 
statement of Theorem \ref{maintheorem}.
We suppose $k$ is an arbitrary field,
$G, \Gamma$\/, and $C$ finite groups, and $V$ a finite
dimensional $(k(G), k(C))$-bimodule on which $G$ acts faithfully,
so $G \subset GL(V)$. We regard $V, k[V]$ and $k[V]^G$ as trivial
$k(\Gamma)$-modules.
 
Suppose there is a Noether normalization $R \subset k[V]^G$
that is stable under the action of $C$ on $k[V]$.
Further suppose that one has a vector $v$ in $V$ such that the fiber
$\Phi_v:=\phi^{-1}(\phi(v))$ containing $v$ for the map 
$\phi: V \rightarrow \Spec R$ carries both
a free (but not necessarily transitive)
$G$-action and that this fiber $\Phi_v$ is stable under the action of $C$.
Denote by $\mm_{\phi(v)}$ the maximal ideal in $R$
corresponding to $\phi(v)$ in $\Spec R$ and introduce the coordinate ring
of $\Phi_v$ namely,
$A(\Phi_v) = k[V]/\mm_{\phi(v)} k[V]$,
which is a $k(C)$-module.

Let $U$ be a finite-dim\-en\-sion\-al $(k(\Gamma),k(G))$-bimodule
which we regard as a trivial $k(C)$-module.

In this situation Theorem \xref{maintheorem} asserts that
the relative invariants $M:=(U \otimes_k k[V])^G$ satisfy the equation
$$
\sum_{i \geq 0} (-1)^i \sum_{j \geq 0} \left[ \Tor_i^R( M, k )_j \right]
	= \left[ (U\otimes_k A(\Phi_v))^G \right].
$$
in $\Groth(k (\Gamma \times C))$. Note that Hilbert's syzygy theorem tells us
that the sum is finite since $R$ is a polynomial algebra.

In the subsections that follow we make various reductions leading up to the
proof of Theorem \ref{maintheorem}, with the goal of separating out the
different ideas involved.

\subsection{Reduction 1:  Removing the $\Gamma$-action}
\label{Brauer-character-section}

The intention here is to prove the following lemma.

\begin{LEMMA}
\label{reduction1-lemma}
Theorem~\ref{maintheorem} follows from its special case where
$\Gamma$ acts trivially on $U$; that is, the case with only a $C$-action
and no $\Gamma$-action.
\end{LEMMA}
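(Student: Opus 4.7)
The plan is a two-step reduction via Brauer theory as reviewed in Section~\ref{Brauer-review}. First, the joint restriction maps $\Groth(k(\Gamma \times C)) \to \bigoplus_{\gamma} \Groth(k(\langle\gamma\rangle \times C))$ over all $p$-regular elements $\gamma \in \Gamma$ are injective, so it suffices to verify the asserted identity in each $\Groth(k(\langle\gamma\rangle \times C))$. Enlarging $k$ to a splitting field for $\langle\gamma\rangle$ further embeds these Grothendieck rings injectively, so no information is lost; over such a field $k\langle\gamma\rangle$ is split semisimple, and $U = \bigoplus_j U^{(j)}$ decomposes into its $\langle\gamma\rangle$-isotypic components, with $\gamma$ acting on each $U^{(j)}$ by a single scalar $\omega_\gamma^j$. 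Because $G$ commutes with $\Gamma$, every $U^{(j)}$ is stable under $G$, and hence is a $k(G)$-submodule of $U$.

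Second, I would exploit the hypothesis that $k[V]$, $R$, and $A(\Phi_v)$ all carry the trivial $\langle\gamma\rangle$-action: the three functors $U \mapsto (U \otimes_k k[V])^G$, $M \mapsto \Tor_i^R(M, k)_d$, and $U \mapsto (U \otimes_k A(\Phi_v))^G$ are additive and $\langle\gamma\rangle$-equivariant, and so commute with the $\langle\gamma\rangle$-isotypic decomposition. Writing $M^{(j)} := (U^{(j)} \otimes_k k[V])^G$, both sides of the identity of Theorem~\ref{maintheorem} accordingly split into direct sums indexed by $j$, and on the $j$-th summand $\gamma$ acts by the single scalar $\omega_\gamma^j$.

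It then remains to observe that in $\Groth(k(\langle\gamma\rangle \times C))$ the class of any module on which $\gamma$ acts as a fixed scalar $\omega_\gamma^j$ factors as $[S_j]\cdot [W]$, where $S_j$ is the one-dimensional $\langle\gamma\rangle$-representation on which $\gamma$ acts by that scalar and $[W]$ is pulled back from $\Groth(k(C))$; moreover $[S_j]$ is a unit. Cancelling $[S_j]$ on both sides reduces the $j$-th piece of the equation to the corresponding identity in $\Groth(k(C))$ applied to $U^{(j)}$, now regarded as a $k(G)$-module carrying a trivial $\Gamma$-action. This last identity is precisely the special case of Theorem~\ref{maintheorem} that we are assuming. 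The step most deserving of care is the second one---the compatibility of the three functors with $\langle\gamma\rangle$-isotypic decomposition---and it relies squarely on the triviality of the $\Gamma$-action (hence of $\gamma$) on every ring appearing in the statement.
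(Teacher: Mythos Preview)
Your argument is correct and follows essentially the same route as the paper: reduce via Brauer theory to cyclic $\langle\gamma\rangle$ for $p$-regular $\gamma$, then split everything into $\langle\gamma\rangle$-isotypic pieces using that $\gamma$ acts trivially on $k[V]$, $R$, and $A(\Phi_v)$, and match the pieces. The only organizational difference is in how the isotypic pieces are packaged. You decompose $U=\bigoplus_j U^{(j)}$ first (using that the $\gamma$-eigenspaces are $G$-stable) and then invoke the special case with the pair $(G,\,U^{(j)})$. The paper instead writes the $U_s$-isotypic piece of $W^G$ as $(W\otimes_k U_s)^{\tilde G}$ with $\tilde G:=\langle\gamma\rangle\times G$, and invokes the special case with the pair $(\tilde G,\,U\otimes_k U_s)$; unwinding, $(U\otimes_k U_s)^{\langle\gamma\rangle}$ is exactly one of your $U^{(j)}$, so the two reductions produce the same instances. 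Your packaging has the minor virtue of keeping $G$ unchanged (hence the faithfulness hypothesis $G\subset GL(V)$ is visibly preserved), at the cost of passing to a splitting field for $\langle\gamma\rangle$; the paper's formulation avoids the explicit field extension but implicitly replaces $G$ by $\tilde G$. Both are harmless.
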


This will essentially be a consequence of the Brauer theory reviewed in
Section~\ref{Brauer-review},  applied to the $k(\Gamma)$-modules
$(U \otimes_k k[V])^G$ and $(U\otimes_k A(\Phi_v))^G$.

Given a $p$-regular element $\gamma$ in $\Gamma$,
let $\tilde{G}:=\langle \gamma \rangle \times G$.
For any $(k(\Gamma),k(G))$-bimodule $W$, which we regard as a left
$k(\Gamma\times G)$-module, its
$G$-fixed subspace $W^G$ is a semisimple $k\langle \gamma \rangle$-module,
and one can express its $k\langle \gamma \rangle$-isotypic direct sum
decomposition in terms of $\tilde{G}$-fixed subspaces:
\begin{equation}
\label{isotypic-decomposition}
W^G \cong \bigoplus_{j \in \ZZ/d\ZZ}
	\left( W \otimes_k U_{j} \right)^{\tilde G}
\end{equation}
If $W$ happens also to have an action of $C$ commuting with the actions of
$\Gamma$ and $G$ this becomes a direct sum of $k(C)$-modules.

\begin{proofof}[Lemma \ref{reduction1-lemma}]
Apply 
the preceding discussion
to the $(k(G), k(\Gamma))$-bimodules
$W=U \otimes_k A(\Phi_v)$ and $W=U\otimes_k k[V]$.
Using the fact that functors like tensor product and $\Tor$ commute with
direct sums, along with the Brauer theory
from Section~\ref{Brauer-review}, one sees that Theorem~\ref{maintheorem} is
equivalent to showing for each $p$-regular element
$\gamma \in \Gamma$
\begin{equation}
\label{reduction1-equality}
 \sum_{i \geq 0} (-1)^i \sum_{j \geq 0}
\left[ \Tor^R( (\tilde{U}_s \otimes_k k[V])^{\tilde G}, k )_j \right] 
    = \left[ (\tilde{U}_s \otimes_k A(\Phi_v))^{\tilde{G}} \right]
\end{equation}
in $\Groth(k(C))$, where $U_s$ ranges over the simple
$k(\langle \gamma \rangle)$-modules and
$\tilde{U}_s := U \otimes_k U_s$. 
However, the equality \eqref{reduction1-equality} is an instance of
Theorem~\ref{maintheorem} with trivial
$\Gamma$-action: Simply replace $G$ by $\tilde{G}$, and
$U$ by $\tilde{U_s}$.
\end{proofof}

\subsection{Reduction 2:  Replacing Special Fiber with General Fiber}
\label{Borho-Kraft-section}

A key idea in the proof of Theorem~\ref{maintheorem} goes back to
Borho and Kraft \cite{BorhoKraft},
before that to Kostant \cite{Kostant}, and perhaps earlier: Given the finite 
ramified cover $\phi: V \rightarrow \Spec R$, one should compare the group
actions on the special fiber $\Phi_0$ and its coordinate ring $k[V]/R_+k[V]$ 
with the (often better understood) actions on a more general fiber 
$\Phi_v$ and its coordinate ring $k[V]/\mm_{\phi(v)}k[V]$.

To set this up, we return to the situation of Section~\ref{rationality-section}
with an $R$-module $M$ and compatible finite group $\Theta$ acting as above.
Then $k=R/R_+$ carries the trivial $R$-module and $k(\Theta)$-action.
Let $k'$ be a different, possibly non-graded, $R$-module structure on the field
$k$. In other words, $k'=R/\mm'$ where $\mm'$ is some (generally inhomogeneous)
maximal ideal of $R$ which happens to be $\Theta$-stable.  
Note that the action of $\Theta$ on $k'$ remains trivial, since $k'$ is 
spanned by the image of $1$, on which $\Theta$ acts trivially.

\begin{PROP}
\label{finite-hd-proposition}
Let $\Theta$ be a finite group,
$R$ an $\NN$-graded Noetherian algebra over the field $k$
and $M$ a finitely generated $R$-module. Assume that both $R$ and $M$ have
compatible $\Theta$-actions and that $\hd_R(M)$ is finite.
Denote by $\mm$ the (tautological) maximal ideal $R_+$ of $R$
(so $R /\mm \iso k$ is the tautological $R$-module structure on $k$), and
by $\mm'$ some (possibly inhomogenous)
maximal ideal of $R$ which is $\Theta$-stable. Set $k' = R/ \mm'$.

Then one has the following (ungraded) equality in $\Groth(k(\Theta))$
of two (finite) sums:
$$
\sum_{i, j \geq 0} (-1)^i [\Tor^R_i(M, k)_j]
=
\sum_{i, j \geq 0} (-1)^i [\Tor^R_i(M, k')_j].
$$
\end{PROP}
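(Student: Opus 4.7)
My proof plan: The strategy is to express both alternating sums as the Euler characteristic in $\Groth(k(\Theta))$ of a single finite resolution $F_\bullet \to M$ tensored against the two residue fields, and to observe that both tensored complexes yield the same Euler characteristic. Since $\hd_R(M) < \infty$, Proposition~\ref{rationality-proposition}\,(iii) (built on the construction of (i)) produces a finite $R$-free, $k(\Theta)$-equivariant resolution
$$0 \to F_n \to F_{n-1} \to \cdots \to F_0 \to M \to 0$$
of length $n = \hd_R(M)$, in which each resolvent is structured as $F_i \iso R \otimes_k U_i$ for a finite-dimensional $k(\Theta)$-module $U_i$ obtained by lifting a minimal generating set at the corresponding syzygy stage. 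For any $R$-module $N$ we then have $\Tor^R_i(M, N) = H_i(F_\bullet \otimes_R N)$, which vanishes for $i > n$, so both sums in the proposition are automatically finite.

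Tensoring $F_\bullet$ with either $N = k$ or $N = k'$ gives a bounded complex of finite-dimensional $k(\Theta)$-modules, and the Euler characteristic is preserved under passage to homology, so
$$\sum_{i \geq 0}(-1)^i [\Tor_i^R(M, N)] \;=\; \sum_{i=0}^n (-1)^i [F_i \otimes_R N]\quad\text{in }\Groth(k(\Theta)).$$
The key identification is then $F_i \otimes_R k \iso U_i \iso F_i \otimes_R k'$ as $k(\Theta)$-modules. Both isomorphisms follow from $F_i = R \otimes_k U_i$ together with the observation (recorded immediately before the proposition) that $\Theta$ acts trivially on both residue fields, since each is spanned over $k$ by the image of $1$, which is $\Theta$-fixed. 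Hence both alternating sums collapse to $\sum_{i=0}^n (-1)^i [U_i]$ in $\Groth(k(\Theta))$ and so agree.

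The main subtlety is to confirm that the construction in Proposition~\ref{rationality-proposition}\,(i)--(iii) really delivers every $F_i$, including the terminal one $F_n$, in the structured form $R \otimes_k U_i$; this is encoded in iterating the lifting procedure on successive syzygy modules and checking that the process terminates at the top degree rather than leaving a further nonzero syzygy that would need to be resolved. Once that structural form is in hand, the rest is routine homological bookkeeping with Euler characteristics in $\Groth(k(\Theta))$.
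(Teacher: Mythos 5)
Your reduction to $\sum_{i=0}^n (-1)^i [F_i \otimes_R N]$ for $N = k, k'$ coincides with the paper's first step, and you correctly flag where the weak point lies. But the subtlety you name cannot be resolved in the way you hope, and this is a genuine gap. The construction of Proposition~\ref{rationality-proposition}(iii) produces a length-$n$ resolution by iterating the construction of part~(i) only $n-1$ times and then taking the terminal resolvent to be the $n$-th syzygy $F_n = \ker(d_{n-1})$ itself (this is the base case of that induction, where the module is already $R$-free). This $F_n$ is a graded free $R\rtimes\Theta$-module, but it is \emph{not} in general of the structured form $R \otimes_k U_n$ with diagonal $\Theta$-action. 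If one instead applies the lifting construction once more to $\ker(d_{n-1})$ so as to force the structured form, the resulting map $R \otimes_k U_n \to \ker(d_{n-1})$ need not be injective when $k(\Theta)$ is not semisimple (the $k(\Theta)$-submodule $U_n$ generated by a minimal lift can be strictly larger than the minimal generating set), so the resolution would continue past length $n$ and not terminate. Either way, your key identification $F_i \otimes_R k \cong U_i \cong F_i \otimes_R k'$ is unavailable at $i = n$.

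This is not a cosmetic point: the remark immediately following the proposition in the paper exhibits exactly such a graded free $R\rtimes\Theta$-module $F_i = Re_0 \oplus Re_1$ over $R = \FF_2[x]$ with $\Theta = \ZZ/2$ acting via $\begin{pmatrix} 1 & x \\ 0 & 1 \end{pmatrix}$, for which $F_i \otimes_R k$ and $F_i \otimes_R k'$ are \emph{non-isomorphic} $k(\Theta)$-modules (trivial versus nontrivial Jordan block). One checks directly that this $F_i$ admits no diagonal tensor decomposition $R \otimes_k U$, since the $\Theta$-action mixes the free generators across $R$-degrees. The paper's actual proof sidesteps the structured-form question entirely: it fixes a homogeneous $R$-basis $\{e_\alpha\}$ for the general $F_i$, filters $F_i \otimes_R k$ and $F_i \otimes_R k'$ by degree of the $e_\alpha$, and observes that the $\Theta$-action on the associated graded pieces is given by the degree-zero part of the structure constants $r_{\beta,\alpha}(g) \in R$, which represent the same scalar in $k$ or in $k'$. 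This yields equality in $\Groth(k(\Theta))$ without needing an isomorphism of the un-filtered modules, and is exactly the extra idea your proposal is missing.
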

\begin{proof}
Compute either of the two $\Tor$'s by starting with a finite
(but not necessarily minimal) free $R$-resolution $\FFF$ of $M$ produced as in
Proposition~\ref{rationality-proposition}(iii),
tensoring over $R$ with $k$ or $k'$, and then taking the
homology of either $\FFF \otimes_R k$ or $\FFF \otimes_R k'$.  
Each term $F_i \otimes_R k$ or $F_i \otimes_R k'$
is a finite-dimensional $k$-vector space, and taking
alternating sums in $\Groth(k(\Theta))$
gives the following equalities
$$
\begin{aligned}
\sum_{i, j \geq 0} (-1)^i \left[\Tor^R_i(M,k)_j\right]
	&= \sum_{i \geq 0} (-1)^i [F_i \otimes_R k]\\
\sum_{i, j \geq 0} (-1)^i \left[\Tor^R_i(M,k')_j\right]
	&= \sum_{i \geq 0} (-1)^i [F_i \otimes_R k']
\period
\end{aligned}
$$
Note the sums are finite because $M$ is finitely generated and
$\hd_R(M)$ is finite. 

It therefore suffices to show that as $k(\Theta)$-modules only
(disregarding their $R$-module structure), one has
$
[F_i \otimes_R k] = [F_i \otimes_R k']
$
in $\Groth(k(\Theta))$, which we prove by a filtration argument. Given a homogeneous 
$R$-basis $\{e_\alpha\}$ for $F_i$, one has filtrations
$\AAA, \AAA'$ on the two $k$-vector spaces $F_i \otimes_R k, F_i \otimes_R k'$
defined as follows: let $\AAA_j, \AAA'_j$ be the $k$-span of those $k$-basis
elements $e_\alpha \otimes_R 1$ in which $\deg(e_\alpha) \leq j$.
Since $\Theta$ acts in a grade-preserving
fashion, it preserves these filtrations.  We claim that
there is also a $k(\Theta)$-module isomorphism 
$\AAA_j/\AAA_{j-1} \rightarrow \AAA'_j/\AAA'_{j-1}$
sending the $k$-basis element $e_\alpha \otimes 1$ to the $k$-basis
element $e_\alpha \otimes 1$.  To check that this isomorphism is $\Theta$-equivariant,
given $g \in \Theta$, let 
$
g(e_\alpha) = \sum_\beta r_{\beta,\alpha}(g) e_\beta
$ 
for some homogeneous elements $r_{\beta,\alpha}(g)$ in $R$. One then has, the
same computation in either of $\AAA_j/\AAA_{j-1}$ or $\AAA'_j/\AAA'_{j-1}$\/,
$$
g(e_\alpha \otimes 1) 
  = \sum_{\beta: \deg(e_\beta)=\deg(e_\alpha)} 
     r_{\beta,\alpha}(g) \left( e_\beta \otimes 1 \right)
\period
$$
Note that in this last sum, the coefficient $r_{\beta,\alpha}(g)$ in $R$ represents
the same element in the quotient fields $k$ or $k'$, since it must be of degree zero 
by homogeneity considerations.
\end{proof}

We comment that in the last part of the proof of Proposition 3.2.1 we do not 
necessarily get an isomorphism of $k(\Theta)$-modules, as can be seen by 
considering an example where $R=\FF_2[x]$ acted on trivially by a cyclic
group 
$\Theta$ of order 2. We may take $F_i=Re_0\oplus Re_1$ to be free of rank 2, 
where $e_0$ lies in degree 0 and $e_1$ lies in degree 1. Let $\Theta$ act on 
$F_i$ via the matrix
$$
\bmatrix{
1&x\cr
0&1\cr
}
$$
and let $k'=R/(x-1)$. Then $\Theta$ acts on $\mathcal{F}\otimes_R k$ via
$$
\bmatrix{
1&0\cr
0&1\cr
}
$$
and on $\mathcal{F}\otimes_R k'$ via
$$
\bmatrix{
1&1\cr
0&1\cr
}.
$$
These two actions are non-isomorphic.

\medskip

The particular case of Proposition~\ref{finite-hd-proposition} 
that will interest us most is where one has a finite group
$G \subseteq GL(V)$, with $R \subseteq k[V]^G$ an integral extension of graded
algebras.
Then given any finite-dimensional $k(G)$-module $U$, as in the introduction,
one can form the $R$-module $U \otimes_k k[V]$, with diagonal $k(G)$-action, 
having $G$-fixed subspace $M:=(U \otimes_k k[V])^G$ which
retains the structure of an $R$-module (because $G$ acts trivially on $R$).

\begin{PROP}
Let $G$ be a finite group that acts on the finite dimensional $k$-vector spaces
$V$ and $R \subset k[V]^G$ a Noether normalization. Then for any
finite dimensional  $k(G)$-module $U$ the module of relative invariants
$M:=(U\otimes_k k[V])^G$ is finitely-generated as
an $R$-module.
\end{PROP}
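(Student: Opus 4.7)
The plan is to exhibit $M$ as an $R$-submodule of a finitely generated $R$-module, and then invoke Noetherianity of $R$ to conclude. The key observation is that $k[V]$ is already module-finite over $R$, so tensoring with the finite-dimensional $k$-vector space $U$ keeps things module-finite, and passing to $G$-fixed points keeps things inside a Noetherian module.

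More precisely, I would proceed in four steps. First, recall the classical theorem of E.\ Noether that, since $G$ is finite, $k[V]$ is module-finite (in particular, integral) over $k[V]^G$; this is where the finiteness of $G$ really enters. Second, by the definition of a Noether normalization, $k[V]^G$ is module-finite over the polynomial subring $R$. Composing these two, the $R$-algebra $k[V]$ is module-finite, hence the tensor product $U \otimes_k k[V]$ is a finitely generated $R$-module (it is the direct sum of $\dim_k(U)$ copies of $k[V]$ as an $R$-module, after forgetting the $G$-action). Third, since $R$ is a polynomial $k$-algebra, it is Noetherian, so every $R$-submodule of the finitely generated $R$-module $U \otimes_k k[V]$ is again finitely generated. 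Fourth, observe that $G$ acts on $U \otimes_k k[V]$ by $R$-module automorphisms (because $G$ fixes $R$ pointwise, since $R \subseteq k[V]^G$), so the $G$-fixed subspace $M = (U \otimes_k k[V])^G$ is an $R$-submodule; applying the Noetherian conclusion finishes the proof.

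There is no serious obstacle here: the proof is a straightforward composition of standard facts. The only point worth emphasizing is that $G$ acts $R$-linearly on $U \otimes_k k[V]$, which is what makes $M$ genuinely an $R$-submodule; this hinges on the inclusion $R \subseteq k[V]^G$. In particular, the argument works verbatim in any characteristic and makes no use of $|G|$ being invertible in $k$, nor of $k[V]^G$ itself being polynomial; it only uses that $R$ is Noetherian and that $k[V]$ is module-finite over $R$, a transitivity of module-finiteness that is used throughout the remainder of the section.
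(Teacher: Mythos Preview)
Your proof is correct and follows essentially the same approach as the paper: both argue that $U\otimes_k k[V]$ is finitely generated over a Noetherian ring in the tower $R\subseteq k[V]^G\subseteq k[V]$, and then pass to the $G$-fixed submodule via Noetherianity. The only cosmetic difference is that the paper first shows $M$ is finitely generated over $k[V]^G$ (viewing $M$ as a $k[V]^G$-submodule of a Noetherian $k[V]^G$-module) and then descends to $R$, whereas you descend to $R$ first and then take the submodule; the content is identical.
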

\begin{proof}
Recall the tower of integral extensions 
$$
R \subseteq k[V]^G \subseteq k[V].
$$
Note that $U \otimes_k k[V]$ is finitely-generated as a $k[V]$-module,
hence also finitely-generated as a $k[V]^G$-module.  Hence it is a Noetherian
$k[V]^G$-module, and its $k[V]^G$-submodule $M=(U \otimes_k k[V])^G$ 
will be Noetherian as well, so is finitely-generated over $k[V]^G$.  
But then $M$ is also finitely-generated over $R$.
\end{proof}

In the situation of Theorem~\ref{maintheorem},
with $R,M,\Gamma,C$ as defined there, choose
$$
k'=k_v:=R/\mm_{\phi(v)}.
$$
Note $\hd_R(M)$ is finite and bounded above by $\dim_k (V)$ via Hilbert's
syzygy theorem since $R$ is a polynomial algebra.
Thus Proposition~\ref{finite-hd-proposition} together
with Lemma~\ref{reduction1-lemma} show that to prove
Theorem~\ref{maintheorem} reduces to showing

\begin{equation}
\label{reduction2-equation}
\sum_{m \geq 0}(-1)^i \sum_{j \geq 0}
\left[ \Tor_i^R( M, k_v)_j \right] = \left[ (U\otimes_k A(\Phi_v))^G \right]
\end{equation}
in $\Groth(k(C))$.

The plan for proving \eqref{reduction2-equation}
is pretty clear:  Try to prove the vanishing result
\[
\Tor^R_i(M,k_v)=0~\mbox{for}~ i > 0\comma
\]
and then try to identify
\[
\Tor^R_0(M,k_v)  \cong (U \otimes_k A(\Phi_v))^G
\]
as $k(C)$-modules.

\subsection{Reduction 3:  Working Locally via Completions}
\label{completion-review-section}

Both parts in the aforementioned plan for proving \eqref{reduction2-equation}
in Section \ref{Borho-Kraft-section}
have the advantage that one can work with local and semi-local rings, by
taking completions with respect to $\mm_{\phi(v)}$-adic topologies.
Completion will be shown
to commute with $\Tor$, reducing the vanishing statement $\Tor^R_i(M,k_v)=0$
for $i > 0$ 
to showing that the completion $\hat{M}$ of $M$ is free as a module over the
completion $\hat{R}$ of $R$.  
Working locally also allows us to take advantage of the Chinese
Remainder Theorem for semi-local rings.

We begin with a review of some properties of completion in a general
setting. Much of this material can be found in \cite[Chap. 10]{AtiyahMacdonald},
\cite[Chap. 7]{Eisenbud}, \cite[\S 8]{Matsumura}.
Let $R$ be a commutative Noetherian ring, $M$ an $R$-module, and
$\gl$ any ideal of $R$.  Denote by $\hat{R}$  the {\emphasis completion} of
$R$ with respect to the $\gl$-adic topology, and
$\hat{M} \cong \hat{R} \otimes_R M$ the corresponding completion of $M$.

\begin{PROP}
\label{completion-fixed-points-commute}
Let $R$ be a commutative Noetherian ring, $M$ an $R$-module, $\gl$ an ideal of $R$,
and $\Theta$ a finite group acting on $M$ by $R$-module homomorphisms.
Then taking fixed points commutes with $\gl$-adic completion:
$
\left( \hat{M} \right)^\Theta  \cong \widehat{M^\Theta}
$
that is,
$$
(\hat{R} \otimes_R M)^\Theta  \cong \hat{R} \otimes_R M^\Theta
$$
as $\hat{R}$-modules.
\end{PROP}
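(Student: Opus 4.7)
The plan is to express the operation of taking $\Theta$-fixed points as the kernel of a map into a \emph{finite} direct sum, and then to exploit the flatness of the completion $\hat R$ over the Noetherian ring $R$ to commute this kernel with $\hat R \otimes_R -$.

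First I would observe that since $\Theta$ is finite and acts on $M$ by $R$-module homomorphisms, the submodule $M^\Theta$ fits into the exact sequence of $R$-modules
$$
0 \lra M^\Theta \lra M \overset{\phi}{\lra} \bigoplus_{g \in \Theta} M,
\qquad \phi(m) := (gm - m)_{g \in \Theta}.
$$
This presentation is the whole point: because $\Theta$ is finite, the target is a finite direct sum, which will interact well with tensor products.

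Next, since $R$ is Noetherian, the $\gl$-adic completion $\hat R$ is flat over $R$ (see e.g.\ \cite[Chap.~10]{AtiyahMacdonald} or \cite[Chap.~7]{Eisenbud}), so $\hat R \otimes_R -$ is exact. Applying it to the displayed sequence, and using that tensor product commutes with finite direct sums, gives the exact sequence
$$
0 \lra \hat R \otimes_R M^\Theta \lra \hat R \otimes_R M \overset{\mathrm{id}\otimes\phi}{\lra} \bigoplus_{g \in \Theta} \bigl( \hat R \otimes_R M \bigr).
$$

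Finally I would identify this kernel with $(\hat R \otimes_R M)^\Theta$. Because $\Theta$ acts $R$-linearly on $M$, the base-change formula $g \cdot (r \otimes m) := r \otimes gm$ defines an action of $\Theta$ on $\hat M := \hat R \otimes_R M$ by $\hat R$-module maps; under this action the morphism $\mathrm{id} \otimes \phi$ is precisely the map $\hat M \to \bigoplus_{g \in \Theta} \hat M$ sending $x \mapsto (gx - x)_{g \in \Theta}$, whose kernel by definition is $(\hat M)^\Theta$. Comparing with the exact sequence above yields the desired isomorphism of $\hat R$-modules $(\hat R \otimes_R M)^\Theta \cong \hat R \otimes_R M^\Theta$. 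There is no real obstacle here: the argument is simply the general principle that any exact functor preserving finite products commutes with $\Theta$-invariants for finite $\Theta$, applied to the flat functor $\hat R \otimes_R -$; the only routine check is the compatibility of the tensored differential with the induced $\Theta$-action on $\hat M$, which is immediate from $g(r \otimes m) = r \otimes gm$.
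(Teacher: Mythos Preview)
Your argument is correct and is essentially the same approach as the paper's: both rest on the flatness of $\hat R$ over the Noetherian ring $R$ and the fact that an exact functor commutes with taking $\Theta$-invariants for finite $\Theta$. The only difference is packaging: the paper invokes the general statement that flat base change commutes with all group cohomology functors $H^j(\Theta,-)$ (Proposition~\ref{exact-functors-commute-with-cohomology}) and then specializes to $j=0$, whereas you prove the $j=0$ case directly by writing $M^\Theta$ as the kernel of $M \to \bigoplus_{g\in\Theta} M$.
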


\begin{proof}
Completion is an exact functor from $R$-modules to $\hat{R}$-modules,
so commutes with the cohomology functors $H^j(\Theta,-)$ by
Proposition~\ref{exact-functors-commute-with-cohomology}.
In particular, this holds for $j=0$, the fixed-point functor $(-)^\Theta$.
\end{proof}

If $\mm$ is a {\it maximal} ideal of $R$, then the ideal
$\hat\mm := \mm\hat{R}$ is also maximal in $\hat{R}$, and the two residue
fields are the same, viz.,
$$
k_\mm = R/\mm =\hat{R}/\hat\mm.
$$  
Thus $k_\mm$-vector spaces can be regarded as both $R$-modules and
$\hat{R}$-modules.  Furthermore, $\hat{R}$ is a {\it local} ring.  The next
proposition exploits this to translate
the vanishing of $\Tor_i$ for all $i > 0$ into freeness of the completed
module.

\begin{PROP}
\label{Tor-completion-commute}
Let $\mm$ be a maximal ideal of a commutative Noetherian ring $R$, and $M$ an $R$-module.
Then as $k_\mm$-vector spaces one has
$$
\Tor_i^{\hat{R}}(\hat{M},k_\mm) \cong \Tor_i^{R}(M,k_\mm).
$$
Consequently $\Tor_i^{R}(M,k_\mm)$ vanishes for all $i > 0$ if and only if $\hat{M}$ is a free $\hat{R}$-module.
\end{PROP}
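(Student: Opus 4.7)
The plan is to prove the isomorphism of $\Tor$'s by a standard flat base change argument, and then deduce the freeness equivalence from the usual local criterion for freeness over a Noetherian local ring.

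First I would establish the isomorphism $\Tor_i^{\hat R}(\hat M, k_{\mm}) \cong \Tor_i^R(M, k_{\mm})$. The key point is that $\hat R$ is flat over $R$ for $R$ Noetherian (this is standard, see e.g.\ \cite[Chap.~10]{AtiyahMacdonald}). Take any free $R$-resolution $F_\bullet \to M$. Applying the exact functor $\hat R \otimes_R (-)$ yields a resolution $\hat R \otimes_R F_\bullet \to \hat R \otimes_R M = \hat M$ by free $\hat R$-modules, which therefore computes $\Tor_i^{\hat R}(\hat M, k_{\mm})$. On the other hand, the canonical isomorphism
$$
(\hat R \otimes_R F_i) \otimes_{\hat R} k_{\mm} \;\cong\; F_i \otimes_R (\hat R \otimes_{\hat R} k_{\mm}) \;\cong\; F_i \otimes_R k_{\mm}
$$
identifies the two complexes termwise, hence identifies their homologies. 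This gives the asserted isomorphism of $k_{\mm}$-vector spaces.

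For the consequence, the forward implication (if $\hat M$ is $\hat R$-free then all higher $\Tor_i^R(M, k_{\mm})$ vanish) is immediate from the isomorphism, since $\Tor_i^{\hat R}(\hat M, k_{\mm})$ obviously vanishes in positive degrees when $\hat M$ is free. For the converse, assume $\Tor_i^R(M, k_{\mm}) = 0$ for all $i > 0$; by the isomorphism the same holds for $\Tor_i^{\hat R}(\hat M, k_{\mm})$. Now $\hat R$ is a Noetherian local ring with maximal ideal $\hat\mm$ and residue field $k_{\mm}$, and one invokes the standard local criterion: a (finitely generated) module $N$ over a Noetherian local ring with residue field $\kappa$ is free if and only if $\Tor_1(N, \kappa) = 0$. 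The proof is the textbook one: choose elements of $\hat M$ whose images form a $\kappa$-basis of $\hat M / \hat\mm \hat M$; by the Nakayama lemma they generate $\hat M$, giving a surjection $\hat R^n \twoheadrightarrow \hat M$ with kernel $K$; the long exact sequence in $\Tor(-, k_{\mm})$ together with $\Tor_1^{\hat R}(\hat M, k_{\mm}) = 0$ forces $K \otimes_{\hat R} k_{\mm} = 0$, so $K = 0$ by Nakayama once more.

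The only delicate point is that this local criterion requires $\hat M$ to be finitely generated over $\hat R$; this is automatic in all applications in the paper, since $M$ is finitely generated over $R$ (so $\hat M = \hat R \otimes_R M$ is finitely generated over $\hat R$), and the proposition should be read under this mild hypothesis. No deeper obstacle arises: the whole statement is a combination of flat base change for $\Tor$ and the local freeness criterion, with the maximality of $\mm$ entering only to ensure $R/\mm = \hat R/\hat\mm$ so that $k_{\mm}$ can be viewed simultaneously as an $R$- and an $\hat R$-module.
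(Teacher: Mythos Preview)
Your argument is correct and is essentially the paper's own proof: both use flatness of $\hat R$ over $R$ to identify the two $\Tor$'s (the paper phrases this as $\hat R \otimes_R \Tor_i^R(M,k_\mm) \cong \Tor_i^R(M,k_\mm)$ since the latter is already a $k_\mm$-vector space, while you unpack it via an explicit free resolution), and then invoke the local criterion for freeness over the Noetherian local ring $\hat R$. Your remark about the implicit finite-generation hypothesis on $M$ is well taken and matches how the result is used throughout the paper.
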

\begin{proof}
The asserted isomorphism is a consequence of the string of isomorphisms that
follows.
$$
\begin{aligned}
\Tor_i^{\hat{R}}(\hat{M},k_\mm)
   & \cong \Tor_i^{\hat{R}}(\hat{R} \otimes_R \hat{M},\hat{R} \otimes_R k_\mm) \\
   & \cong \hat{R} \otimes_R \Tor_i^{R}(M,k_\mm) \\
   & \cong \Tor_i^{R}(M,k_\mm) \\
\end{aligned}
$$
The second of these uses the fact that $\hat{R} \otimes_R (-)$ is exact, and
the last
that $\Tor^R(M,N)$ is an $R/\Ann_R (N)$-module,
so a vector space over $k_\mm=R/\mm=\hat{R}/\hat\mm$,
and $\hat{R} \tensor_R k_\mm = k_\mm$\/.

Since $\hat{R}$ is a local ring with residue field $k_\mm$, the
$\hat{R}$-module $\hat{M}$ is free if and only if
$\Tor_i^{\hat{R}}(\hat{M},k_\mm) =0$ for $i > 0$.
So the previous isomorphism implies $\Tor^R_i(M, k_\mm) = 0$
for all $i > 0$\/.
\end{proof}

\subsection{(Semi-)local Analysis of the Fibers}
\label{vanishing-section}
Let $k$ be a field and $V$
a finite-dimensional $k$-vector space. Suppose that $G$ is a finite subgroup
of $GL(V)$ and $R \subset k[V]^G$ a Noether normalization,
so we have integral extensions of graded algebras
$$
R \subseteq k[V]^G \subseteq k[V]
\period
$$
Let $U$ be a finite-dimensional $(k(\Gamma), k(G))$-bimodule for some finite group
$\Gamma$  
with module of relative invariants $M:=(U \otimes_k k[V])^G$\/.
Suppose that there exists a vector $v \in V$ whose fiber $\Phi_v$
for the composite $\phi$ in the tower of ramified coverings
$$
V \rightarrow V/G \rightarrow \Spec R
$$
is permuted freely (but not necessarily transitively) by $G$.  
In this section we let $\Gamma$ act trivially on $V$\/ and
ignore all $C$-actions.  The $C$-actions will be put back in 
Section~\ref{incorporating-cyclic-action-section}.

For any $w\in V$, denote by $k_w$ the $k[V]$-module structure on $k$ defined by
$f\cdot \alpha:=f(w) \alpha$ for all $f\in k[V]$ and $\alpha \in k$.
The corresponding maximal ideal of polynomials vanishing at $w$ is denoted by 
$\mm_w\subset k[V]$.
By restriction we also consider $k_w$ as module over $k[V]^G, R$, etc.
The notations for the corresponding maximal ideals of $k[V]^G$ and $R$ are
$$
\begin{aligned}
\mm_{Gw}&=\mm_w\cap k[V]^G\\
\mm_{\phi(w)}&=\mm_w\cap R.
\end{aligned}
$$
The ideal $\mm_{Gw}$  can also be characterized as those $G$-invariant
polynomials that vanish at the $G$-orbit $Gw$ regarded as a point of
$V/G$. Likewise, $\mm_{\phi(w)}$
can be characterized as those polynomials in $R$ that vanish on the whole fiber
$$
\Phi_w:=\phi^{-1}(\phi(w))=\{u\in V: \phi(u)=\phi(w)\},
$$
and the generators are easy to describe:  If $R=k[f_1,\ldots,f_m]$ then
$$
\mm_{\phi(w)}=(f_1-f_1(w),f_2-f_2(w),\ldots,f_m-f_m(w))R.
$$
Let $\hat{R}$ denote the complete local ring obtained by completing $R$ at the
maximal ideal $\mm_{\phi(w)}$.

An important property of completion, the Chinese remainder theorem (see
e.g., \cite[Corollary 7.6]{Eisenbud}, \cite[Theorem 8.15]{Matsumura}), gives
cartesian product decompositions of complete semi-local rings.
Thus in the context just described
one has the following table in which the first column lists
various $R$-modules $M$, the second column lists their associated semi-local
completed $\hat{R}$-modules $\hat{M}:=\hat{R} \otimes_R M$ along with
Chinese remainder theorem isomorphisms, and the third column lists their
quotient  $k_w$-modules , viz.,
$
M/\mm_{\phi_(w)}M \cong \hat{M}/\hat\mm_{\phi(w)}\hat{M}.
$
\def \tablestrut{\vrule height 14pt depth 7pt width 0sp}
\[
\begin{tabular}{|c|c|c|}\hline
$R$-module $M$ & $\hat{R}$-module $\hat{M}$ & $k_w$-module $M/\mm_{\phi(w)}M$
\tablestrut\\ \hline\hline
$A:=k[V]$	& $\hat{A}  \cong \prod_{w \in \Phi_w} \hat{A}_w$
	& $A(\Phi_w)$
\tablestrut\\ \hline
$B:=k[V]^G$	& $\hat{B}(=\hat{A}^G) 
		\cong \prod_{Gw \in \Phi_w/G} \hat{B}_{Gw}$ & $B(\Phi_w/G)$
\tablestrut\\ \hline
$R$	& $\hat{R}$ & $k_w$ 
\tablestrut\\ \hline
\end{tabular}
\]
\centerline{{\bf Table \ref{vanishing-section}.1:} Chinese Remainder Table}
\def\ChineseRemainderTable{\ref{vanishing-section}.1}

\noindent
The heart of the matter now lies in using this to prove the following lemma.

\begin{LEMMA}
\label{regular-G-orbits}
Let $k$ be a field and $V$
a finite-dimensional $k$-vector space. Suppose that $G$ is a finite subgroup
of $GL(V)$ and $R \subset k[V]^G$ a Noether normalization.
Assume that there is a $v \in V$ whose fibre $\Phi_v$ carries a free $G$-action.
Let $U$ be a finite-dimensional $(k(\Gamma), k(G))$-bimodule for some finite group
$\Gamma$ with module of relative invariants $M:=(U \otimes_k k[V])^G$
considered as a $B(\Gamma)$-module
in the notations of Table \ChineseRemainderTable.
Then with the notations of that table the following hold.

\begin{enumerate}
\item[(i)] As $\hat{B}(\Gamma)$-modules, one has isomorphisms
\[
\hat{M} \cong (U \otimes_k \hat{B}(G))^G
        \cong \hat{B} \otimes_k (U\otimes_k k(G))^G.
\]
In particular, $\hat{A} \cong \hat{B}(G)$ as $\hat{B}(G)$-modules;
this is the special case $U=k(G)$.

\item[(ii)] $\hat{M}$ is a free $\hat{R}$-module, and hence
$$
\Tor^{\hat R}_i(\hat M,k_v)=0 \text{ for }i > 0.
$$

\item[(iii)] In $\Groth(k(\Gamma))$ there is an equality
\[
\sum_{i \geq 0} (-1)^i \sum_{j \geq 0} [\Tor^R_i(M, k)_j]
	= [\hat{M} \tensor_{\hat{R}} k_v]\period
\]

\item[(iv)] As $k(\Gamma)$-modules there are isomorphisms
\[
\hat{M} \tensor_{\hat{R}} k_v
    \cong ( U \otimes_k A(\Phi_v) )^G
	\cong (U \otimes_k B(\Phi_v/G)(G))^G.
\]
In particular, $A(\Phi_v) \cong B(\Phi_v/G)(G)$ as $B(\Phi_v/G)(G)$-modules;
this is the special case $U=k(G)$.
\end{enumerate}
\end{LEMMA}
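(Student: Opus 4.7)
The plan is to establish the key bimodule isomorphism $\hat{A}\cong \hat{B}(G)$ (as a $(\hat{B},k(G))$-bimodule, with $G$ acting trivially on $\hat{B}$ and by left translation on $k(G)$), from which all four parts will follow via the completion-compatibility results of Section~\ref{completion-review-section} and the $\Tor$-reduction of Proposition~\ref{finite-hd-proposition}.

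For the key identity, I would use the Chinese Remainder decomposition of Table~\ref{vanishing-section}.1 to write $\hat{A}\cong\prod_{w'\in\Phi_v}\hat{A}_{w'}$ and then group the factors by $G$-orbits. Fix an orbit $Gw\subset\Phi_v$; freeness of the $G$-action on $\Phi_v$ forces $|Gw|=|G|$, so for each $w'\in Gw$ there is a unique $g_{w'}\in G$ with $g_{w'}w=w'$. The ring isomorphisms $g_{w'}\colon\hat{A}_w\to\hat{A}_{w'}$ identify $\prod_{w'\in Gw}\hat{A}_{w'}$ with $\hat{A}_w\otimes_k k(G)$, and the identity $g\cdot g_{g^{-1}w'}=g_{w'}$ forced by freeness shows that the $G$-action on the product becomes the pure regular permutation action on the $k(G)$ factor. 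The $G$-fixed ring is then the diagonal copy of $\hat{A}_w$, which therefore equals $\hat{B}_{Gw}$; summing over orbits gives $\hat{A}\cong\hat{B}(G)$. To deduce (i) in general, I would apply Proposition~\ref{completion-fixed-points-commute} to commute $\hat{R}\otimes_R(-)$ with $G$-fixed points, producing
\[
\hat{M}\cong(U\otimes_k\hat{A})^G\cong(U\otimes_k\hat{B}(G))^G\cong \hat{B}\otimes_k(U\otimes_k k(G))^G,
\]
where the last isomorphism uses that $G$ acts trivially on $\hat{B}$.

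For (ii), the input is that $k[V]$ is $R$-free: $k[V]$ is Cohen--Macaulay, $R\subset k[V]$ is module-finite, and $R$ is a polynomial ring (hence regular), so Auslander--Buchsbaum forces $\hd_R(k[V])=0$. Hence $\hat{A}$ is $\hat{R}$-free; by (i), $\hat{A}\cong\hat{B}^{|G|}$ as $\hat{R}$-modules, so $\hat{B}$ is a direct summand of a free $\hat{R}$-module and is itself free over the complete local ring $\hat{R}$. Then $\hat{M}\cong\hat{B}\otimes_k(U\otimes_k k(G))^G$ is a direct sum of copies of $\hat{B}$, hence $\hat{R}$-free, yielding $\Tor^{\hat{R}}_i(\hat{M},k_v)=0$ for $i>0$. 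For (iii), I would apply Proposition~\ref{finite-hd-proposition} with $\mm'=\mm_{\phi(v)}$ (which is $\Gamma$-stable since $\Gamma$ acts trivially on $V$) to rewrite the alternating sum using $k_v$ in place of $k$, then invoke Proposition~\ref{Tor-completion-commute} to translate $\Tor^R(M,k_v)$ into $\Tor^{\hat{R}}(\hat{M},k_v)$, and finally use (ii) to collapse the sum to $[\hat{M}\otimes_{\hat{R}}k_v]$. For (iv), reduce both sides of the isomorphism in (i) modulo $\hat{\mm}_{\phi(v)}$: the $U=k(G)$ case gives $A(\Phi_v)\cong B(\Phi_v/G)(G)$, and substituting this into the general formula yields $(U\otimes_k A(\Phi_v))^G\cong B(\Phi_v/G)\otimes_k(U\otimes_k k(G))^G\cong \hat{M}/\hat{\mm}_{\phi(v)}\hat{M}$.

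The main obstacle is the $G$-equivariant identification $\hat{A}\cong\hat{B}(G)$ in (i): one must carefully track both how $G$ permutes the local factors of $\hat{A}$ via its permutation of $\Phi_v$ and how it acts on each individual factor, and use freeness to see that after the natural identifications the two effects combine into precisely the regular permutation action on $k(G)$. Once this is in hand, parts (ii)--(iv) are routine applications of the completion-and-$\Tor$ machinery already developed in Sections~\ref{Borho-Kraft-section}--\ref{completion-review-section}.
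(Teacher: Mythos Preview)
Your proposal is correct and follows essentially the same route as the paper: the Chinese Remainder decomposition of $\hat{A}$, grouped by free $G$-orbits, yields the key $(\hat{B},k(G))$-bimodule isomorphism $\hat{A}\cong\hat{B}(G)$, and parts (iii) and (iv) are then deduced exactly as you indicate via Propositions~\ref{finite-hd-proposition} and~\ref{Tor-completion-commute}.

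The one small deviation is in (ii). The paper argues locally: it shows each factor $\hat{B}_{Gw_i}$ is free over $\hat{R}$ by observing that $\hat{B}_{Gw_i}\cong\hat{A}_{w_i}$ is itself a regular local ring (completion of a polynomial ring at a maximal ideal), and then applies Auslander--Buchsbaum to the finite extension $\hat{R}\subset\hat{B}_{Gw_i}$ of regular local rings. Your argument instead uses the graded fact that $k[V]$, being Cohen--Macaulay and module-finite over the polynomial ring $R$, is already $R$-free before completion; then $\hat{B}$ is extracted as a direct summand of the free $\hat{R}$-module $\hat{A}\cong\hat{B}^{|G|}$. Both are valid; yours is a touch more economical in that it avoids invoking regularity of the individual $\hat{B}_{Gw_i}$, while the paper's version makes the local structure (needed later in Lemma~\ref{add-in-cyclic-action} for the $C$-action) more explicit.
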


\begin{proof}
We begin with some preparations making use of the isomorphism
$\hat{A} \cong \prod_{w \in \Phi_v} \hat{A}_w$
from Table \ChineseRemainderTable.
Here the right side has componentwise multiplication, and has a
$k(G)$-module structure given via the isomorphisms 
$\hat{A}_w \overset{g}{\rightarrow} \hat{A}_{gw}$
which permute the factors.
As a consequence of the assumption that $G$ acts freely on $\Phi_v$,
there is a decomposition of the fiber 
$$
\Phi_v=Gw_1 \sqcup \cdots \sqcup Gw_r
$$
into free $G$-orbits.  This gives two ways to regroup the factors, viz.,
$$
\hat{A} \cong \prod_{g \in G} 
  \underbrace{\left( \prod_{i=1}^r \hat{A}_{gw_i}\right)}_{:=\hat{A}_g}  
\qquad \text{ and } \qquad
\hat{A} \cong \prod_{i=1}^r 
  \underbrace{\left(
	\prod_{g \in G} \hat{A}_{gw_i}
		\right) }_{:=\hat{A}_{Gw_i}}.
$$
Note that here $\hat{A}_g = g(\hat{A_e})$ where 
$
\hat{A}_e=\prod_{i=1}^r \hat{A}_{w_i}.
$

View $\hat{A}$ as an $\hat{A}_e$-algebra and
$\hat{A}_{Gw_i}$ as a $\hat{A}_{w_i}$-algebra
via the diagonal embeddings
$$
\begin{array}{llll}
 \hat{A}_e     &\overset{\Delta}{\longrightarrow} &\prod_{g \in G} \hat{A}_g
	&\cong \hat{A} \\
 \hat{A}_{w_i} &\overset{\Delta}{\longrightarrow}
	&\prod_{g \in G} \hat{A}_{gw_i} &\cong \hat{A}_{Gw_i} \\
\end{array}
$$
defined by mapping $a$ to the product $(g(a))_{g \in G}$\/.
These embeddings give ring isomorphisms of $\hat{A}_e$ and $\hat{A}_{Gw_i}$
onto their diagonal images, which are exactly the $G$-invariant subrings of
the relevant 
completions, or the completions of the relevant $G$-invariant subrings by 
Proposition~\ref{completion-fixed-points-commute}. To wit,
\begin{equation}
\label{useful-ring-isomorphisms}
\begin{array}{llll}
\hat{A}_e     & \cong \Delta(\hat{A}_e)      & \cong \hat{A}^G
	& \cong \hat{B}\\
\hat{A}_{w_i} & \cong \Delta( \hat{A}_{w_i}) &\cong (\hat{A}_{Gw_i})^G
	&\cong \hat{B}_{Gw_i}
\end{array}
\end{equation}

Next consider the group algebras $\hat{A}_e(G)$ and $\hat{A}_{w_i}(G)$
for the group $G$,
with either $\hat{A}_e$ or $\hat{A}_{w_i}$ as coefficients;
in either case, denote by $t_g$ the
basis element corresponding to the element $g$ in $G$.
We claim that there are isomorphisms of
$\hat{A}_e(G)$-modules and $\hat{A}_{w_i}(G)$-modules
\begin{equation}
\label{group-algebra-isomorphisms}
\begin{array}{llll}
\hat{A}_e(G)
	&\overset{\alpha}{\longrightarrow}
		&\prod_{g \in G} \hat{A}_g
			&\cong \hat{A} \\
\hat{A}_{w_i}(G)
	&\overset{\alpha}{\longrightarrow}
		&\prod_{g \in G} \hat{A}_{gw_i}
			&\cong \hat{A}_{Gw_i} \\
\end{array}
\end{equation}
defined in both cases by  
$$
a t_g  \longmapsto     g(a) e_g
$$
where $e_g$ is the standard basis vector/idempotent corresponding to the
factor in the product indexed by $g$, and where 
$a$ lies either in $\hat{A}_e$ or $\hat{A}_{w_i}$
(so that $g(a)$ lies either in $g(\hat{A}_e)=\hat{A}_g$ or 
$g(\hat{A}_{w_i})=\hat{A}_{gw_i}$).
The inverse isomorphism $\alpha^{-1}$ in either case is defined by
$$
(a^{(g)})_{g \in G}=\sum_{g \in G} a^{(g)} e_g \quad
\longmapsto \quad
\sum_{g \in G} g^{-1} a^{(g)} t_g.
$$

With this preparation, we can start to prove the assertions of the Lemma,
beginning with assertion (i).
In light of the first isomorphism $\hat{A}_e \cong \hat{B}$ in
\eqref{useful-ring-isomorphisms}
the first isomorphism in
\eqref{group-algebra-isomorphisms}
shows $\hat{A} \cong \hat{B}(G)$ as a $\hat{B}(G)$-modules.
Since $G$ acts trivially on $R$, 
using Proposition~\ref{completion-fixed-points-commute} again, one has
$$ 
\begin{aligned}
\hat{M}:=\hat{R}\otimes_R ( (U\otimes_k A)^G )
 & \cong  (\hat{R}\otimes_R (U \otimes_k A))^G\\
 & \cong  (U \otimes_k \hat{A})^G \\
 & \cong  (U \otimes_k \hat{B}(G))^G \\
 & \cong  \hat{B} \otimes_k (U\otimes_k k(G))^G
\end{aligned}
$$
as $\hat{B}$-modules.
However, these are also $\hat{B}(\Gamma)$-module isomorphisms
since the $\Gamma$-action occurs entirely in the $U$ factor and acts trivially
on $R,B,A$ and their completions.

For (ii), it suffices to show that $\hat{M}$ is a free $\hat{R}$-module,
and then to apply Proposition~\ref{Tor-completion-commute}(ii).
Since (i) implies $\hat{M}$ is a free  $\hat{B}$-module
it suffices to show that  $\hat{B}$ is a free $\hat{R}$-module.
From Table \ChineseRemainderTable, one has
$\hat{B} \cong \prod_{i=1}^r \hat{B}_{Gw_i}$, and hence
it is enough to verify that each $\hat{B}_{Gw_i}$ is a free $\hat{R}$-module.
Note that the ring $\hat{B}_{Gw_i}$ is a finite extension of the ring
$\hat{R}$.
It turns out that both of these are regular local rings, because they
are isomorphic to completions of polynomial algebras at maximal ideals
(\cite[Corollary 19.14]{Eisenbud}):
in the case of $\hat{R}$ this is due to the assumption that
$R$ is polynomial, and in the case of $\hat{B}_{Gw_i}$ this is due to the
second isomorphism $\hat{B}_{Gw_i} \cong \hat{A}_{w_i}$
of \eqref{useful-ring-isomorphisms}.
Hence according to the Auslander-Buchsbaum Theorem
\cite[Theorem 19.9]{Eisenbud},
using $\dim (\tridash)$ to indicate Krull dimension of $\tridash$ , we obtain
$$
\begin{aligned}
\hd_{\hat{R}} (\hat{B}_{Gw_i}) 
 & = \dim (\hat{R}) - \depth_{\hat{R}} (\hat{B}_{Gw_i}) \\ 
 & = \dim (\hat{R}) - \dim (\hat{B}_{Gw_i}) \\
 & = \dim (\hat{R}) - \dim (\hat{R}) \\
 & = 0.
\end{aligned}
$$
The second equality here is due to the fact that regular local rings are
Cohen-Macaulay, so their depth and Krull dimension are the same,
while the third follows from
the fact that $\hat{B}_{Gw_i}$ is a finite extension of
$\hat{R}$.  Thus $\hat{B}_{Gw_i}$ is $\hat{R}$-free, and hence
so is $\hat{M}$.

For (iii), note that the sum is finite by Hilbert's Syzygy Theorem.
In $\Groth(k\Gamma)$, one then has the following equalities, justified below:
\[
\begin{aligned}
\sum_{i \geq 0} (-1)^i \sum_{j \geq 0} [\Tor^R_i(M, k)_j]
	& =
\sum_{i \geq 0} (-1)^i \sum_{j \geq 0} [\Tor^R_i(M, k_v)_j] \\
	& =
\sum_{i \geq 0} (-1)^i \sum_{j \geq 0} [\Tor^{\hat{R}}_i(\hat{M}, k_v)_j] \\
	& =
[\Tor^{\hat{R}}_0(\hat{M}, k_v)] \\
	& =
[\hat{M} \tensor_{\hat{R}} k_v].
\end{aligned}
\]
The first equality comes from applying Proposition~\ref{finite-hd-proposition} 
with $k'=k_v$, the second from Proposition \xref{Tor-completion-commute} with
$k_\mm=k_v$, the third from assertion (ii) above, and the last from the definition of
$\Tor_0$.

For (iv), note that 
$$
\begin{aligned}
\hat{M} \tensor_{\hat{R}} k_v
 &\cong  \hat{M}/ \hat{\mm}_{\phi(v)}\hat{M} \\
 &\cong   \left(U \otimes_k  \hat{A})^G \right)/ \hat{\mm}_{\phi(v)}
            \left( U \otimes_k \hat{A})^G\right)\\
  &\cong   \left(U
              \otimes_k  \left( \hat{A}/\hat{\mm}_{\phi(v)}\hat{A} \right) \right)^G \\
  &\cong   \left(U \otimes_k  \left( A/\hat{\mm}_{\phi(v)}A \right) \right)^G =:  (U \otimes_k A(\Phi_v))^G.\\
\end{aligned}
$$
This gives the first isomorphism in (iv).  For the second, note that
$$
\begin{aligned}
\hat{A}/\hat{\mm}_{\phi(v)}\hat{A}
 & \cong \hat{B}(G)/\hat{\mm}_{\phi(v)} \hat{B}(G)  \\
 & \cong (\hat{B}/\hat{\mm}_{\phi(v)} \hat{B})(G)  \\
 & \cong (B/\hat{\mm}_{\phi(v)}B)(G)\\
 & =  B(\Phi_v/G)(G)
\end{aligned}
$$
\end{proof}

\subsection{Finishing the proofs: 
Incorporating the $C$-Action}
\label{incorporating-cyclic-action-section}

Assertions (ii) and (iii) of Lemma~\ref{regular-G-orbits} complete the proof of Theorem~\ref{maintheorem},
except that we have
not yet accounted for the $C$-action.  We rectify this here, and explain
how Theorem~\ref{maintheorem} implies
Theorem~\ref{PolynomialInvariantsTheorem}.

So we assume there is also a finite subgroup $C \subset GL(V)$,
commuting with $G$, that preserves $R$ and the maximal ideal $\mm_{\phi(v)}$, 
making $C$ act on the fiber $\Phi_v$.  One then has compatible $C$-actions on
$$
\begin{array}{lllll}
R &\subseteq &B=k[V]^G &\subseteq &A=k[V]\\
\hat{R} & \subseteq &\hat{B} &\subseteq &\hat{A}\\
k_v &\subseteq & B(\Phi_v/G) &\subseteq& A(\Phi_v).
\end{array}
$$
We wish to describe these actions more explicitly under the assumption that
the $G$-orbits $\{Gw_i\}_{i=1}^r$ in $\Phi_v/G$ are all regular.
Note that $C$ permutes
these $G$-orbits, since it commutes with $G$, and if $c \in C$ stabilizes
some $G$-orbit $Gw_i$, then there will be a unique element $g_{c,w_i} \in G$
for which 
\begin{equation}
\label{orbit-rep-action}
cw_i = g_{c,w_i}w_i.
\end{equation}
One checks that this element $g_{c,w_i}$ depends
only on the choice of the representative
$w_i$ for the orbit $Gw_i$ up to conjugacy as follows. First
$$
g_{c,hw_i}=h g_{c,w_i} h^{-1}.
$$
However, once a choice of representative $w_i$ is made, one has
$$
cgw_i = g c w_i = g g_{c,w_i} w_i
$$ 
for all $g \in G$.

Also recall that for each $i=1,2,\ldots,r$, there is an isomorphism 
(see \eqref{useful-ring-isomorphisms} of \sex\ref{vanishing-section})
of the completed local rings $\hat{A}_{w_i} \cong \hat{B}_{Gw_i}$,
which are finite extensions of the local ring $\hat{R}_{\mm_{\phi(v)}}$.  
Let 
$$
\begin{aligned}
B_{(Gw_i)}&:= B_{Gw_i}/\mm_{\phi(v)}B_{Gw_i} \\
       &\cong \hat{B}_{Gw_i}/\hat\mm_{\phi(v)}\hat{B}_{Gw_i}\\
       &\cong \hat{A}_{w_i}/\hat\mm_{\phi(v)}\hat{A}_{w_i}\\
       &\cong A_{w_i}/\mm_{\phi(v)}A(w_i). \\
\end{aligned}
$$
This ring is a finite-dimensional $k_v$-vector space;
it may be viewed either as the coordinate
ring for the (possibly non-reduced) structure on the fiber
$\Phi_v$ local to the point $w_i$, as a subscheme of $V$, or for the structure
on the orbit space $\Phi_v/G$ local to the orbit $Gw_i$, as a subscheme of
$V/G$.

\begin{LEMMA}
\label{add-in-cyclic-action}
Assume the notation and hypotheses of Lemma~\ref{regular-G-orbits}.
Further assume, as in this section, that there is a finite subgroup $C \subset GL(V)$
which preserves the Noether normalization $R$ and the fiber $\Phi_v$,
Choose for each $G$-orbit $Gw_i$ within $\Phi_v$ a representative $w_i$,
and define $g_{c,w_i}$ in $G$ via \eqref{orbit-rep-action} whenever
$cGw_i=Gw_i$.

Then the
isomorphism of $B(\Phi_v/G)(G)$-module asserted in part (iv)
of that lemma, viz.,
\[
\hat{M} \tensor_{\hat{R}} k_v
     \cong (U \otimes_k  A(\Phi_v) )^G
		\cong (U\otimes_k B(\Phi_v/G)(G) )^G.
\]
is also a $k(C)$-module isomorphism, 
obtained by using the $k(C)$-module structure
induced from the following isomorphisms:
\[
A(\Phi_v) \cong B(\Phi_v/G)(G) \cong  \prod_{i=1}^r B_{Gw_i}(G).
\]

If $c \in C$ has $cGw_i=Gw_j$ for $j \neq i$, then there is
a ring isomorphism $B_{Gw_i} \overset{c}{\rightarrow} B_{Gw_j}$
that extends to a ring isomorphism
$B_{Gw_i}(G) \overset{c}{\rightarrow} B_{Gw_j}(G)$.

For $c \in C$ with $cGw_i=Gw_i$, the ring automorphism 
$B_{Gw_i} \overset{c}{\rightarrow} B_{Gw_i}$ extends to a ring automorphism   
$$
\begin{aligned}
B_{Gw_i}(G) & \overset{c}{\longrightarrow} & B_{Gw_i}(G) \\
a t_g & \longmapsto & c(a) t_{g g_{c,w_i} }.
\end{aligned}
$$

Consequently, there is the following identity relating Brauer character values:

\begin{equation}
\label{Brauer-character-sum}
\chi_{\hat{M} \tensor_{\hat{R}} k_v}(c)
  = \sum_{i:cGw_i = Gw_i} \chi_{B_{Gw_i}}(c) \cdot \chi_{U}(g^{-1}_{c,w_i}).
\end{equation}
\end{LEMMA}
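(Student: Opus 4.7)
The plan is to trace the $C$-action through each isomorphism in the proof of Lemma~\ref{regular-G-orbits}(iv). Since $C$ commutes with $G$, preserves $R$ and the $\mm_{\phi(v)}$-adic topology, and acts trivially on $U$, every step in that proof is automatically $C$-equivariant: completion commutes with the $C$-action, taking $G$-fixed points commutes with $C$ (as $C$ and $G$ commute), and reduction modulo the $C$-stable ideal $\hat{\mm}_{\phi(v)}$ is a map of $k(C)$-modules. So the first order of business is to verify each of these compatibilities is literal, which then reduces the remaining work to describing the $C$-action on $A(\Phi_v) = \prod_{w\in\Phi_v}\hat{A}_w/\hat{\mm}_{\phi(v)}\hat{A}_w$ explicitly, grouped by $G$-orbits.

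Because $C$ commutes with $G$, its induced action on $\Phi_v$ permutes the $G$-orbits $\{Gw_i\}$. If $cGw_i = Gw_j$ with $j\neq i$, then $c$ carries the local factor $\hat{A}_{gw_i}$ to $\hat{A}_{cgw_i}$, inducing a ring isomorphism $\hat{A}_{Gw_i}\to \hat{A}_{Gw_j}$; reducing modulo $\hat{\mm}_{\phi(v)}$ gives the asserted $B_{Gw_i}\to B_{Gw_j}$, and transporting through the group-algebra isomorphism $\alpha$ of \eqref{group-algebra-isomorphisms} extends this to $B_{Gw_i}(G)\to B_{Gw_j}(G)$. For the stabilized case $cGw_i=Gw_i$, I would compute directly: the element $\alpha(at_g)=g(a)e_g$ lies in the $g$-th factor $\hat{A}_{gw_i}$, so $c\cdot\alpha(at_g)$ lies in the factor indexed by $cgw_i=gg_{c,w_i}w_i$ and equals $c(g(a))e_{gg_{c,w_i}}=g(c(a))e_{gg_{c,w_i}}$ (using that $c$ and $g$ commute). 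Pulling back through $\alpha^{-1}$, and adopting the convention that $c$ on $\hat{A}_{w_i}\cong\hat{B}_{Gw_i}$ denotes the transported ring automorphism on the local ring (rather than the literal restriction, which does not make sense since in general $cw_i\neq w_i$), yields the desired formula $at_g\mapsto c(a)t_{gg_{c,w_i}}$.

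For the Brauer character identity, I observe that $G$ acts block-diagonally on $\prod_i B_{Gw_i}(G)$ (since $G$ preserves each of its own orbits), so the $G$-fixed-point functor distributes over the product and $\bigl(U\otimes_k A(\Phi_v)\bigr)^G \cong \prod_i\bigl(U\otimes_k B_{Gw_i}(G)\bigr)^G$. When computing $\chi(c)$, only blocks with $cGw_i=Gw_i$ contribute to the trace; the others are permuted nontrivially. For a stabilized block, the standard isomorphism $(U\otimes_k B_{Gw_i}(G))^G\cong U\otimes_k B_{Gw_i}$ sending $u\otimes b\mapsto \sum_{g\in G}ug^{-1}\otimes bt_g$ transports the $c$-action of the previous paragraph to $u\otimes b\mapsto ug_{c,w_i}\otimes c(b)$, a tensor product of the right multiplication by $g_{c,w_i}$ on $U$ with $c$ acting on $B_{Gw_i}$. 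Since the Brauer character of right multiplication by $g_{c,w_i}$ on $U$ equals $\chi_U(g_{c,w_i}^{-1})$ in the paper's left-module convention, summing over stabilized blocks yields \eqref{Brauer-character-sum}.

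The principal obstacle — and the source of the $g_{c,w_i}^{-1}$ in the final formula rather than $g_{c,w_i}$ — is distinguishing the two different ``$C$-actions on the stalk $\hat{A}_{w_i}$'' that arise: the literal restriction of $c$ acting on $\hat{A}$, which is only defined when $cw_i=w_i$, versus the transported action coming from $c$'s action on the whole block $\hat{A}_{Gw_i}$ pulled back along $\hat{A}_{w_i}\cong\hat{B}_{Gw_i}$, which is defined whenever $cGw_i=Gw_i$. These differ precisely by $g_{c,w_i}^{-1}$. Keeping this distinction straight at each step, and matching it against the right $G$-action on $U$ in the isomorphism $(U\otimes_k k(G))^G\cong U$, is the main bookkeeping challenge; once settled, the rest of the argument is routine naturality and the elementary character computation above.
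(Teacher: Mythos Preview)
Your proposal is correct and follows essentially the same route as the paper's proof: trace the $C$-action through the Chinese Remainder and $\alpha$-isomorphisms, compute $c\cdot\alpha(at_g)$ explicitly to land on $g_{c,w_i}^{-1}c(a)\,t_{gg_{c,w_i}}$, and then absorb the $g_{c,w_i}^{-1}$ into the meaning of ``$c(a)$'' by passing to $\hat{B}_{Gw_i}$ via the diagonal embedding~$\Delta$ (what you call the ``transported'' action). The only organizational differences are that the paper makes the $\Delta$-step explicit rather than phrasing it as a convention, and it isolates the character calculation into a separate Lemma~\ref{right-translation-character-lemma} with scalar coefficients, whereas you do it inline with the $B_{Gw_i}$-coefficients already present --- your tensor-product formula $u\otimes b\mapsto ug_{c,w_i}\otimes c(b)$ and the resulting $\chi_{B_{Gw_i}}(c)\cdot\chi_U(g_{c,w_i}^{-1})$ are exactly what that lemma packages.
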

\begin{proof}
The assertions will be derived by
passing to the quotient by $\mm_{\phi(v)}$ 
from the analogous statement for the $k(C)$-module structures on the 
completed rings $\hat{A}, \hat{B},$ etc.

Note that the Chinese Remainder Theorem isomorphism 
$$
\hat A \cong  \prod_{w \in \Phi_v} \hat A_w
$$
translates the $C$-action on $\hat{A}$ to a $C$-action by isomorphisms
$\hat{A}_w \overset{c}{\rightarrow} \hat{A}_{c(w)}$ permuting the factors
on the right. From this, and the isomorphisms
\[
\displaylines
{
\hat{B} \cong \prod_{i=1}^r \hat{B}_{Gw_i} \cr
\hat A \cong \hat B(G) \cong  \prod_{i=1}^r \hat B_{Gw_i}(G),\cr
}
\]
it is straightforward to check the assertions for the case
when $c \in C$ has $cGw_i=Gw_j$ for $j \neq i$.

If $c \in C$ has $cGw_i=Gw_i$, then there is an automorphism $c$ of 
$\hat{A}_{Gw_i}=\prod_{g \in G} \hat{A}_{gw_i} $,
which acts by isomorphisms 
$\hat{A}_{gw_i} \overset{c}{\rightarrow} \hat{A}_{gg_{c,w_i}w_i}$
between the components.
This action of $c$ translates over to 
$\hat{A}_{w_i}(G)$ using the isomorphism $\alpha$ from
equation \eqref{group-algebra-isomorphisms} in
the proof Lemma~\ref{regular-G-orbits}
which sends  $a t_g \overset{\alpha}{\mapsto} g(a) e_g$.
So $c$ sends $g(a) e_g$ to $cg(a) e_{g g_{c,w_i}}$,
and this maps under $\alpha^{-1}$ to
$$
(g g_{c,w_i})^{-1} cg(a) t_{g g_{c,w_i}} 
= g_{c,w_i}^{-1} g^{-1}cg(a) t_{g g_{c,w_i}} 
=  g_{c,w_i}^{-1} c(a) t_{g g_{c,w_i}}
\comma
$$
where the last equality uses the fact that $C$ and $G$ commute within $GL(V)$.
Thus $c(a t_g) =  g_{c,w_i}^{-1} c(a) t_{g g_{c,w_i}}$.  

From here one can translate the $C$-action to
$\hat{B}_{Gw_i} \cong \prod_{g \in G}\hat{A}_{gw_i}$ 
using the diagonal embedding 
$\hat{A}_{w_i} \overset{\Delta}{\rightarrow}  \hat{B}_{Gw_i}$
that maps $a \mapsto \Delta(a)=(g(a))_{g \in G}$.  One then finds that 
$g_{c,w_i}^{-1} c(a) \overset{\Delta}{\mapsto} c( \Delta(a))$.
In other words, a typical element $b t_g$ in $\hat{B}_{Gw_i}(G)$
is sent by $c$ to $c(b) t_{g g_{c,w_i}}$, as claimed.

The assertion about Brauer characters is then a consequence of
Lemma \ref{right-translation-character-lemma} which follows.
\end{proof}

\begin{LEMMA}
\label{right-translation-character-lemma}
Given an element $g_0$ in the finite group $G$ and a finite-dimensional
$k(G)$-module $U$,
let a cyclic group $C=\langle c \rangle$ act on $U$ via $c(u):=g_0^{-1}(u)$,
and
let $C$ act on $(k(G) \otimes_k U)^G$ via
$$
c\left(\sum_{g \in G} t_g \otimes u_g\right)
	:= \sum_{g \in G} t_{g g_0} \otimes u.
$$

Then $(k(G) \otimes_k U)^G \cong U$ as $k(C)$-modules, and consequently,
if $g_0$ is $p$-regular,
the Brauer character value for $c$ acting on $(k(G) \otimes_k U)^G$ is
$$
\chi_{(k(G) \otimes U)^G}(c) = \chi_U(g_0^{-1}).
$$
\end{LEMMA}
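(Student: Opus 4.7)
The plan is to exhibit an explicit $k$-linear isomorphism $\phi : U \to (k(G) \otimes_k U)^G$ and verify that, under the prescribed actions, it is $k(C)$-equivariant; the Brauer character identity then drops out of the definition.

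First I would take $G$ to act diagonally on $k(G) \otimes_k U$ via $g'(t_h \otimes u) = t_{g'h} \otimes g'(u)$. A direct invariance computation shows that an element $\sum_h t_h \otimes u_h$ lies in $(k(G) \otimes_k U)^G$ if and only if $u_h = h(u_e)$ for all $h \in G$. This means the $k$-linear map
\[
\phi : U \longrightarrow (k(G) \otimes_k U)^G, \qquad u \longmapsto \sum_{g \in G} t_g \otimes g(u),
\]
is a bijection, with inverse $\sum_h t_h \otimes u_h \mapsto u_e$.

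Second I would check that the prescribed $C$-action $c \cdot (t_h \otimes u) := t_{hg_0} \otimes u$ commutes with the diagonal $G$-action (which it does since it only touches the $k(G)$ factor via right multiplication by $g_0$), so it restricts to an action on $(k(G) \otimes_k U)^G$. Transporting across $\phi$ gives
\[
c \cdot \phi(u) = \sum_{g \in G} t_{g g_0} \otimes g(u) = \sum_{h \in G} t_h \otimes h\bigl(g_0^{-1}(u)\bigr) = \phi\bigl(g_0^{-1}(u)\bigr),
\]
where the middle equality uses the substitution $h = g g_0$. Hence $\phi$ is a $k(C)$-module isomorphism, once $C$ acts on $U$ by $c(u) = g_0^{-1}(u)$ as in the hypothesis.

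Finally, assuming $g_0$ is $p$-regular so that $c$ is $p$-regular in $C$ and the Brauer characters in question are defined, the eigenvalues (with multiplicities) of $c$ on $(k(G) \otimes_k U)^G$ agree via $\phi$ with the eigenvalues of the operator $g_0^{-1}$ on $U$; summing the complex lifts recovers
\[
\chi_{(k(G) \otimes_k U)^G}(c) = \chi_U(g_0^{-1})
\]
directly from the definition recalled in Section~\ref{Brauer-review}. There is no real obstacle: the entire argument amounts to matching the two given actions through the standard parametrization of $G$-invariants in $k(G) \otimes_k U$, and the Brauer character statement is then tautological.
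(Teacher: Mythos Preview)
Your proof is correct and follows exactly the same approach as the paper: the paper writes down the very same map $u \mapsto \sum_{g \in G} t_g \otimes g(u)$ and simply asserts it ``is easily checked to give the desired $k(C)$-module isomorphism,'' whereas you have carried out that check in detail. The Brauer character conclusion is, as you say, immediate from the isomorphism.
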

\begin{proof}
The map
$$
\begin{aligned}
U &\longrightarrow& (k(G) \otimes_k U)^G \\
u & \mapsto       & \sum_{g \in G} t_g \otimes g(u)
\end{aligned}
$$
is easily checked to give the desired $k(C)$-module isomorphism.
\end{proof}

\begin{proofof}[Theorem~\ref{PolynomialInvariantsTheorem}]
Assume that $k[V]^G$ is polynomial, and $c$ in $G$ is a regular element,
so that $c(v) = \omega v$ for some vector $v$ whose $G$-orbit $Gv$ is free.

We wish to apply Theorem~\ref{maintheorem} with $R=k[V]^G$,
so that $\Phi_v$ consists of only the regular $G$-orbit $Gv$
(that is, $r=1$ and $w_1=v$ is the representative of the unique $G$-orbit on
$\Phi_v$). In this case, the local rings $\hat{R}_{\phi(v)}=\hat{B}_{Gv}$
are the same, and their quotient $B(Gv)$ by the maximal ideal
$\mm_{\phi(v)}=\mm_{Gv}$ is the field $k_v\cong k$.
Thus as $k(G)$-modules, one has $A(\Phi_v) \cong k(G)$, and 
Theorem~\ref{maintheorem} implies that 
$$
\sum_{i \geq 0}(-1)^i \sum_{j \geq 0} \left[ \Tor^R_i(M, k)_j \right]
	= [(k(G) \otimes_k U)^G] = [U]
$$
as $k(\Gamma)$-modules, where $M=(U \otimes_k k[V])^G$,
for any finite-dimensional $k(G)$-module $U$ and finite subgroup $\Gamma$ of
$\Aut_{k(G)} U$.

We wish to also take into account the action of a cyclic group
$C=\langle \tau \rangle
\subset \Aut_{k(G)} V$ whose generator $\tau = c^{-1}$ acts as the scalar
$\omega^{-1}$ on $V$.
Then $\tau$ scales $V^*$ by $\omega$, and acts on the graded rings and modules
$R=k[V]^G, k[V], M:=(U \otimes k[V])^G$ by the scalar $\omega^j$ in their
$j^{th}$ homogeneous component, exactly as in the $C$-action considered in
Theorem~\ref{PolynomialInvariantsTheorem}.
Note that $\tau$ acts on $\Phi_v=Gv$ by $\tau(v)=c^{-1}(v)$ and more generally
$\tau(g(v))=\omega \cdot g(v) = gc^{-1}(v)$.
Thus Lemma~\ref{add-in-cyclic-action}
shows that the $k(C)$-structure on $A(\Phi_v) \cong k(G)$ has
$\tau(t_g)=t_{gc^{-1}}$,
in agreement with the $k(G \times C)$-structure on $k(G)$ that appeared
in Springer's Theorem, and  Lemma~\ref{right-translation-character-lemma}
then shows that the $k(\Gamma \times C)$ structure on $U$ agrees with the one
that appears in Theorem~\ref{PolynomialInvariantsTheorem}.
\end{proofof}

\subsection{Induced Modules and the Proof of
	Corollary~\ref{CyclicSievingCorollary}}
\label{induced-module-section}

We would like to apply Theorems~\ref{PolynomialInvariantsTheorem} and
\ref{maintheorem}
to modules $M$ which are (relative) invariants for a subgroup $H$ of $G$,
rather than
for $G$ itself; in particular, we would like to draw conclusions about the
$H$-invariant subring $k[V]^H$, as in Corollary~\ref{CyclicSievingCorollary}.

This is made possible by
a suitable notion of induction of $k(H)$-modules to $k(G)$-modules.
Regard $k(G)$ as a $k(G \times H)$-module via the action
$$
(g,h)\cdot t_{g'}:=gg'h^{-1}.
$$  
Then given any finite-dimensional $k(H)$-module $W$, define its
{\emphasis induced} $k(G)$-module to be
$$
\begin{aligned}
\Ind_H^G (W) &:= \Hom_{k(H)}(k(G),W)\\
           &= \{f \in \Hom_k (k(G),W): f(t_{gh^{-1}})=h(f(t_g)) \}.\\
\end{aligned}
$$
That this is a $k(G)$-module follows from the equality
$(g\cdot f)(t_{g'})=f(t_{g^{-1}g'})$.
We next explain how this construction converts relative invariants for $G$ into
relative invariants for $H$.

\begin{PROP} [Frobenius Reciprocity]
\label{Frobenius-reciprocity}
For finite-dimensional $k(G)$-modules $V$ and $k(H)$-modules $W$,
$$
(V \otimes_k \Ind_H^G (W))^G 
\cong 
(\Res^G_H (V) \otimes_k W)^H.
$$
\end{PROP}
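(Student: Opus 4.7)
The plan is to prove the projection formula (tensor identity)
\[
V \otimes_k \Ind_H^G(W) \;\cong\; \Ind_H^G(\Res^G_H(V) \otimes_k W)
\]
as $k(G)$-modules, and then take $G$-fixed points on both sides, combining this with the standard identity $(\Ind_H^G(X))^G \cong X^H$ valid for any finite-dimensional $k(H)$-module $X$.

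For the fixed-point identity, a $G$-invariant $f$ in $\Hom_{k(H)}(k(G),X)$ satisfies $(g\cdot f)(t_{g'})=f(t_{g^{-1}g'})=f(t_{g'})$ for all $g,g'\in G$, so $f$ is the constant function with value $x:=f(t_e)$. The defining $H$-linearity condition $f(t_{eh^{-1}})=h\cdot f(t_e)$ then forces $x=h\cdot x$ for each $h\in H$, i.e.\ $x\in X^H$; conversely, any $x\in X^H$ produces a $G$-invariant constant map $t_g\mapsto x$ lying in $\Hom_{k(H)}(k(G),X)$. This establishes $(\Ind_H^G X)^G\cong X^H$.

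For the projection formula, define
\[
\alpha: V \otimes_k \Ind_H^G(W) \longrightarrow \Ind_H^G(\Res^G_H(V) \otimes_k W),
\qquad \alpha(v \otimes f)(t_g) := g^{-1}(v) \otimes f(t_g).
\]
I would verify three things. First, $\alpha(v\otimes f)$ actually lies in $\Hom_{k(H)}(k(G),\Res^G_HV\otimes_k W)$, since $\alpha(v\otimes f)(t_{gh^{-1}}) = hg^{-1}(v)\otimes h(f(t_g)) = h\cdot\alpha(v\otimes f)(t_g)$, using the $H$-equivariance of $f$ together with the restricted $H$-action on $\Res^G_H V$. Second, $\alpha$ intertwines the diagonal $G$-action on the source with the induced $G$-action on the target, by a direct calculation: both sides evaluated at $t_{g'}$ give $(g^{-1}g')^{-1}(v)\otimes f(t_{g^{-1}g'})$. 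Third, $\alpha$ is a bijection; since both sides have $k$-dimension $[G:H]\cdot\dim V\cdot\dim W$, it suffices to check injectivity. Writing a kernel element in terms of a basis $\{v_j\}$ of $V$ as $\sum_j v_j\otimes f_j$, the vanishing of $\sum_j g^{-1}(v_j)\otimes f_j(t_g)$ in $V\otimes_k W$ for each fixed $g$ forces $f_j(t_g)=0$ for every $j$ and every $g$, because $\{g^{-1}(v_j)\}_j$ is again a $k$-basis of $V$.

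Applying $(-)^G$ to the projection formula yields
\[
(V \otimes_k \Ind_H^G(W))^G \;\cong\; (\Ind_H^G(\Res^G_H(V)\otimes_k W))^G \;\cong\; (\Res^G_H(V) \otimes_k W)^H,
\]
which is Frobenius reciprocity. The main obstacle is purely bookkeeping: one must keep track of the several actions on $k(G)$ (left $G$, right $H$, and the diagonal $G$-action on the tensor product) and be consistent about the convention $(g\cdot f)(t_{g'})=f(t_{g^{-1}g'})$ for the induced action. Once those conventions are pinned down, each of the three verifications reduces to a short direct calculation with no homological subtlety involved.
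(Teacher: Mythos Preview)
Your proof is correct and follows exactly the same two-step strategy as the paper's own proof: first the tensor identity $V\otimes_k\Ind_H^G(W)\cong\Ind_H^G(\Res^G_H(V)\otimes_k W)$, then the identification $(\Ind_H^G X)^G\cong X^H$. The only difference is that the paper cites these as standard facts from \cite[\S 3.3, Lemma 1]{Sefive} and \cite[3.3.1, 3.3.2]{BensonRepBook}, whereas you spell out the explicit maps and verifications.
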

\begin{proof} This is a standard isomorphism which follows 
from \cite[\sex3.3, Lemma 1]{Sefive} or
\cite[3.3.1, 3.3.2]{BensonRepBook}, giving
$$
(V \otimes_k \Ind_H^G (W))^G 
\cong
(\Ind_H^G(\Res^G_H (V) \otimes_k W))^G
\cong 
(\Res^G_H (V) \otimes_k W)^H.
$$
as required.\end{proof}

We will be particularly interested in the situation where the $k(H)$-module $W$
is the trivial module, so that $\Ind_H^G (W)=\Ind_H^G (k)$ is the permutation
module for $G$ on the cosets
$H \backslash G$, and the $H$-relative invariants are simply
the $H$-invariants, namely, $(V \otimes_k W)^H = (V \otimes_k k)^H=V^H$.  
In this situation, the isomorphism in Proposition~\ref{Frobenius-reciprocity}  
becomes a $k(\Gamma)$-module isomorphism for the action of the group
$\Gamma:=N_G(H)/H$. This is a special case of the following result.

\begin{PROP}
Consider nested subgroups $H \subset L \subset G$ of a finite group $G$,
and assume that $W=\Res^L_H (\widetilde{W})$ for some finite-dimensional
$k(L)$-module $\widetilde{W}$. Then 
\begin{enumerate}
\item[(i)] The group $\Gamma:=N_L(H)/H$ acts on $U:=\Ind_{H}^{G} (W)$ via
$$
(\gamma  \cdot f)(t_g):=\gamma(f(t_{g\gamma}))
$$
for $\gamma \in N_L(H).$

\item[(ii)] For $\widetilde{W} \neq 0$ this action is faithful, that is,
it injects $\Gamma$ into $\Aut_{k(G)}(U).$

\item[(iii)] The isomorphism
$$
(V \otimes_k \Ind_H^G (W))^G
\cong
(\Res^G_H (V) \otimes_k W)^H.
$$
of Proposition~\ref{Frobenius-reciprocity} is also a $k(\Gamma)$-isomorphism,
assuming one lets $k(\Gamma)$ act as follows:
  \begin{enumerate}
   \item[$\bullet$] on the left, solely in the factor $\Ind_H^G (W)$, while
   \item[$\bullet$] on the right, diagonally in both factors of
$\Res^G_H (V) \otimes_k W$.
  \end{enumerate}
\end{enumerate}
\end{PROP}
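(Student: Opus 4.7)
For part (i), four verifications will be needed. First, I will check that the formula descends to $\Gamma = N_L(H)/H$: if $\gamma' = \gamma h$ with $h \in H$, the defining identity $f(t_{g'h}) = h^{-1}(f(t_{g'}))$ of $\Ind_H^G(W)$, applied at $g' = g\gamma$, will collapse $(\gamma' \cdot f)(t_g) = \gamma h(f(t_{g\gamma h}))$ to $\gamma(f(t_{g\gamma}))$. Second, I will verify that $\gamma \cdot f$ again lies in $\Ind_H^G(W)$: since $\gamma$ normalizes $H$, one has $h^{-1}\gamma = \gamma h'^{-1}$ with $h' = \gamma^{-1} h \gamma \in H$, which converts the $H$-equivariance of $f$ into that of $\gamma \cdot f$. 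Third, the action axioms follow by a direct unwinding. Finally, commutation with the $G$-action reduces to checking that $(g_0 \cdot (\gamma \cdot f))(t_g)$ and $(\gamma \cdot (g_0 \cdot f))(t_g)$ both equal $\gamma(f(t_{g_0^{-1} g \gamma}))$. Throughout, one uses the full $L$-action on $W = \Res^L_H \widetilde{W}$, not merely the $H$-action, because $\gamma$ need not lie in $H$.

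For part (ii), given $\gamma \in N_L(H) \setminus H$, the left cosets $H$ and $\gamma H$ will be distinct. I will choose a nonzero $w \in \widetilde{W}$ and construct $f \in \Ind_H^G(W)$ by declaring $f(t_1) = 0$, $f(t_\gamma) = w$, and $f(t_g) = 0$ on every other left-coset representative $g$, and then extending by $H$-equivariance; this is consistent because the $H$-equivariance condition only couples values within a single left coset. Then $(\gamma \cdot f)(t_1) = \gamma(w) \neq 0$ while $f(t_1) = 0$, so the action of $\gamma$ is nontrivial.

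For part (iii), my plan is to trace through the Frobenius reciprocity isomorphism, factored as
\[
V \otimes_k \Ind_H^G(W) \;\overset{\alpha}{\cong}\; \Ind_H^G(\Res^G_H V \otimes_k W), \qquad v \otimes f \;\longmapsto\; \bigl(t_g \mapsto g^{-1}(v) \otimes f(t_g)\bigr),
\]
followed by evaluation at $t_1$ after taking $G$-invariants (using that a $G$-invariant element of $\Ind_H^G(X)$ is a constant function and is identified with its value in $X^H$). Concretely, $F = \sum_i v_i \otimes f_i \in (V \otimes_k \Ind_H^G W)^G$ will be mapped to $\sum_i v_i \otimes f_i(t_1) \in (\Res^G_H V \otimes_k W)^H$. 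The $\Gamma$-equivariance is the main content to verify: $\gamma \cdot F$ will map to $\sum_i v_i \otimes \gamma(f_i(t_\gamma))$, while the diagonal action of $\gamma$ on the target gives $\sum_i \gamma(v_i) \otimes \gamma(f_i(t_1))$. These two expressions must be reconciled using the $G$-invariance of $F$ specialized at $t_\gamma$, which yields $\sum_i \gamma^{-1}(v_i) \otimes f_i(t_\gamma) = \sum_i v_i \otimes f_i(t_1)$; applying $\gamma$ diagonally (which is legitimate precisely because $\gamma \in L$ acts both on $V$ and on $\widetilde{W}$) produces the desired identity. This last reconciliation is the main obstacle, and it is the step where the hypothesis that $W$ extends to an $L$-module is used essentially.
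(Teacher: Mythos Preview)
Your proposal is correct and follows the same approach as the paper, which for parts (i) and (iii) simply asserts the verifications are ``straightforward'' and ``not hard to check''; your write-up supplies exactly the details those phrases suppress. The only minor difference is in part (ii): the paper takes the witness $f$ to be supported on the coset $H$ (with $f(t_g)=g^{-1}(w)$ for $g\in H$) and observes that $\gamma\cdot f$ is then supported on $H\gamma^{-1}$, whereas you support $f$ on $\gamma H$ and evaluate at $t_1$; both choices work equally well.
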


\begin{proof}
It is straightforward to check that the definition in (i) describes an action
of $N_L(H)$ on $\Ind_H^G (W)$ with $H$ acting trivially, and that this
action commutes with the $k(G)$-module structure.

To see that $\widetilde{W} \neq 0$ implies the action is faithful,
pick any $\tilde{w} \neq 0$ in $\widetilde{W}$ and
consider the element $f$ of $\Ind_H^G (W)$ defined by
$$
f(t_g) = 
\begin{cases}
g^{-1}(w) & \text{ if }g \in H,\\
0 & \text{ otherwise.}
\end{cases}
$$
An element $\gamma$ in $N_L(H)$ sends $f$ to the function 
$$
(\gamma \cdot f)(t_g) = 
\begin{cases}
\gamma ((g \gamma)^{-1}(w)) = g^{-1}(w) & \text{ if }g\gamma \in H,\\
0 & \text{ otherwise.}
\end{cases}
$$
Hence $\gamma \cdot f = f$ if and only if $\gamma$ lies in $H$.

It is also not hard to check that the isomorphism
given in the proof of Proposition~\ref{Frobenius-reciprocity}
is $k(\Gamma)$-equivariant for the actions described.
\end{proof}

\bigskip

We next provide the proof of our main application,
Corollary \ref{CyclicSievingCorollary},
resolving in the affirmative both Conjecture 3 and Question
4 in \cite{RSW}.

\begin{proofof} [Corollary \ref{CyclicSievingCorollary}]
Recall that the ground field $k$ is arbitrary  and
$H \subset G$ are two nested finite subgroups of $GL(V)$
where $V$ is a finite dimensional $k$-vector space.
We have assumed that the ring of invariants $k[V]^G$ is a polynomial algebra,
so it is its own Noether normalization and may be taken as $R$ in the results
established in this section. The group
$C=\langle c \rangle$ is a  cyclic subgroup generated by a regular element
$c$ in $G$, with eigenvalue $\omega$ on some regular eigenvector in $V$. 
We set $\Gamma:=N_G(H)/H$ and
have given $k[V]^H$ the $k(\Gamma \times C)$-structure in which $c$ scales the
variables in $V^*$ by $\omega$, and $\Gamma$ acts by linear substitutions.

What we must prove is that
in $\Groth(k(\Gamma \times C))$
\begin{equation}
\label{sieving-equivalent-equation}
\sum_{i \geq 0}(-1)^i \sum_{j \geq 0} \left[ \Tor^{k[V]^G}_i(k[V]^H,k)_j \right]
= \left[ k(H \backslash G) \right].
\end{equation}
So ignoring the action of $\Gamma$ this will imply that 
$$
X(t) =\frac{\left[k[V]^H\right](t)}{\left[k[V]^G\right](t)} \in \ZZ[[t]],
$$
is a polynomial in $t$, and the action of $C$ on the set
$X=H \backslash G$ then
gives a triple $(X,X(t),C)$ that exhibits the
cyclic sieving phenomenon of \cite{RStantonWhite}. Specifically,
for any element $c^j$ in $C$, the cardinality of its fixed point subset
$X^{c^j} \subset X$ is
given by evaluating $X(t)$ at a complex root-of-unity $\hat\omega^j$ of the
same multiplicative order as $c^j$, viz.,
$
|X^{c^j}| = \left[ X(t) \right]_{t=\hat{\omega}^j}.
$

In this context, we apply Theorem~\ref{PolynomialInvariantsTheorem}, together
with the results of this section, taking
$\widetilde{W}=k$ to be the trivial $k(G)$-module.
Then $W:=\Res^G_H (\widetilde{W}) = k$ is the trivial $k(H)$-module,
and 
$$
U:=\Ind_H^G (W)=\Ind_H^G k = k(H \backslash G)
$$
where $k(H \backslash G)$ carries the $k(\Gamma \times C)$-module structure
as described in the statement of Corollary \ref{CyclicSievingCorollary}.
So we have
$$
M=(U \otimes_k k[V])^G \cong (k\otimes_k k[V])^H= k[V]^H
\comma
$$
and unraveling the notations,
Theorem \ref{PolynomialInvariantsTheorem} tells us that
equality \eqref{sieving-equivalent-equation} holds in $\Groth(k(\Gamma \times C))$
as required. If we
ignore the $\Gamma$-action and compare Brauer character values for each
element $c^{-j}$ in $C$ on the two sides of equality \eqref{sieving-equivalent-equation}
we find, on the left side
$$
\sum_{i \geq 0}(-1)^i \left[ \Tor^{k[V]^G}(k[V]^H,k)\right](t)
	\Bigg\res_{t=\hat\omega^j}
	= \frac{\left[k[V]^H\right](t)}{\left[k[V]^G\right](t)}
	\Bigg\res_{t=\hat\omega^j}
= X(\hat\omega^j)
\comma
$$
where $\hat\omega$ is the Brauer lift
to $\CC^\times$ of $\omega \in k^\times$, while on the right
it is
the number of fixed points for $c^{j}$ permuting the elements of the
set $X=H\backslash G$.
\end{proofof}

\section{Character Values and Hilbert series}
\label{cyclic-only-section}

  We provide here further context and applications of
Theorem \ref{maintheorem}, modelled on certain variations of Molien's
theorem, which we first recall.

\subsection{Molien's theorem and Brauer character values}
\label{Molien-section}

Molien's theorem is most usually stated in characteristic zero, but it is well
known that it works equally in positive characteristic, using Brauer characters
instead of ordinary characters of representations.  We recall a version of this
result that works in all characteristics. In our statement we suppose that a
certain $k(G)$-module $U$ is projective. Observe that in characteristic zero this is no
restriction because all modules are projective. In positive characteristic $p$
the Brauer character of a $k(G)$-module is only defined on the $p$-regular
elements of $G$, that is, the elements whose order is prime to $p$.
So that our statement works in all characteristics, we describe these elements
as the ones whose order is invertible in $k$. The theorem has to do with the
Hilbert series for the module of $U$-relative
invariants $(U \otimes_k k[V])^G$, where $V$ is a $k(G)$-module, and by a
standard isomorphism this module of relative invariants is isomorphic to
$\Hom_{k(G)}(U^*,k[V])$. When $U=P_S$ is the projective cover of a simple
$k(G)$-module $S$ (in characteristic 0 this means $U\cong S$) the Hilbert
series of this module is thus the composition factor multiplicity series
$$
\sum_{n=0}^\infty [k[V]_n:S^*]\dim{\rm End}_{k(G)}(S) t^n.
$$

\begin{PROP}(Molien; see \cite{Mitchell} \sex1 Proposition 1.2 and
formula (1.5))
Let $V$ be a finite dimensional $k(G)$-module and $U$ a finite dimensional projective $k(G)$-module with Brauer character values $\chi_U(g)$. Define $\chi_U(g)=0$ when the order of $g$ is not invertible in $k$. Then one has
$$
\Hilb( (U \otimes_k k[V])^G, t) = 
\frac{1}{|G|} \sum_{ g \in G} \frac{\chi_U(g)}{\det(1-t g^{-1})}. 
$$
\end{PROP}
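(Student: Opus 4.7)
The plan is to compute the Hilbert series term-by-term via a modular-character formula for the dimension of a fixed-point space $(U \otimes_k W)^G$ that relies on the projectivity of $U$, then to collect the terms corresponding to $W = k[V]_n$ into a rational generating function via the standard eigenvalue computation for Brauer characters of symmetric powers.

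First I would reduce the claim to a coefficient-wise identity by noting that $G$ respects the grading on $U \otimes_k k[V]$ inherited from $k[V]$, so
\[
\Hilb((U \otimes_k k[V])^G, t) = \sum_{n \geq 0} \dim_k (U \otimes_k k[V]_n)^G \, t^n.
\]
The central step is then the identity
\[
\dim_k (U \otimes_k W)^G = \frac{1}{|G|} \sum_{g \in G} \chi_U(g) \chi_W(g)
\]
for any finite-dimensional $k(G)$-module $W$, using the convention $\chi_U(g) = 0$ on $p$-singular $g$. This is the modular counterpart of the classical averaging formula. My argument would use that (a) tensoring a projective $k(G)$-module with any $k(G)$-module (diagonal action) yields a projective $k(G)$-module, so $U \otimes_k W$ is projective, and (b) any projective $k(G)$-module $P$ lifts to an $\mathcal{O}(G)$-lattice $\tilde{P}$, where $\mathcal{O}$ is a complete discrete valuation ring with residue field a suitable extension of $k$; reduction modulo the maximal ideal commutes with $G$-fixed points on a projective, so $\dim_k P^G$ equals the characteristic-zero dimension $\dim_{\Frac(\mathcal{O})} (\tilde{P} \otimes_{\mathcal{O}} \Frac(\mathcal{O}))^G = \frac{1}{|G|} \sum_g \chi_{\tilde{P}}(g)$. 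Since the ordinary character of $\tilde{P}$ restricts to the Brauer character of $P$ on $p$-regular elements and vanishes elsewhere, the stated formula follows.

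Substituting $W = k[V]_n$, exchanging the finite sum over $g$ with the formal sum over $n$, and evaluating the inner generating function on a $p$-regular $g$ with eigenvalues $\lambda_1, \ldots, \lambda_d$ on $V$ via
\[
\sum_{n \geq 0} \chi_{k[V]_n}(g) \, t^n = \prod_{i=1}^d \frac{1}{1 - \lambda_i^{-1} t} = \frac{1}{\det(1 - t g^{-1})},
\]
where $g$ acts on $V^*$ with eigenvalues $\lambda_i^{-1}$ and so on $\mathrm{Sym}^n V^*$ with Brauer character the complete homogeneous symmetric polynomial $h_n(\lambda_1^{-1}, \ldots, \lambda_d^{-1})$, then assembles the claimed Molien identity.

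The main obstacle is the averaging identity in positive characteristic. In characteristic zero it is immediate from the averaging idempotent $\frac{1}{|G|} \sum_g g$ acting on $U \otimes_k W$, while in positive characteristic it genuinely uses the projectivity of $U$ and the lifting theorem for projective $k(G)$-modules. Once this is in hand, the remaining steps are routine formal manipulations of eigenvalue generating functions.
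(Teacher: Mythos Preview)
The paper does not prove this proposition; it simply records it with a citation to Mitchell, so there is no ``paper's own proof'' to compare against. Your argument is correct and is essentially the standard one behind the cited result: use projectivity of $U$ (hence of $U\otimes_k W$) to lift to a $p$-modular system, invoke vanishing of the ordinary character of a projective on $p$-singular elements to justify the averaging formula $\dim_k P^G=\frac{1}{|G|}\sum_g \chi_{\tilde P}(g)$, and then assemble the symmetric-power Brauer characters into the Molien determinant. The one point worth making explicit is that $\det(1-tg^{-1})$ in the displayed formula must be read via the Brauer lift of the eigenvalues of $g^{-1}$ on $V$, so that it lives in $\CC[t]$ and matches the Brauer-character generating function you compute; you use this implicitly but do not say it.
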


\noindent
The next result is an immediate corollary of Molien's theorem, appearing 
implicitly in the discussion of Springer \cite[(4.5)]{Springer}.
It describes a situation where information flows in the other
direction, that is, one can compute Brauer character values for $k(G)$-modules
from knowledge
of the Hilbert series of their relative invariants.  To state it,
given a graded algebra $R$ and graded
$R$-module $M$, as in Section~\ref{rationality-section}, define
$$
X_{M,R}(t)=\frac{\Hilb(M,t)}{\Hilb(R,t)}
	= \sum_{i \geq 0}(-1)^i \Hilb(\Tor_i^R(M,k),t).
$$
where the second equality comes from
Proposition~\ref{rationality-proposition}(\romannumeral\ref{seriesinvertible}).

\begin{CORY}
\label{nonmodular-Molien-corollary}
Let $V$ be a finite dimensional $k(G)$-module and $U$ a finite dimensional
projective $k(G)$-module with Brauer character $\chi_U$. 
Let $g\in G$ be such that the order of $g$ is invertible in $k$ and let
$\omega$ be an element of $k^\times$
such that the multiplicity of $\omega$ as an eigenvalue of elements of
$G$ is achieved uniquely by the conjugacy class of $g$.
Let $\hat\omega$ denote a Brauer lift of $\omega$.

Then the value of the Brauer character $\chi_U$ at $g$ is given by
\begin{equation}
\label{Springer-implicit-equation}
\chi_U(g) = X_{M, R}(\hat\omega),
\end{equation}
where $M:=(U \otimes_k k[V])^G$ is considered as a module over the 
invariant ring $R:=k[V]^G$.
\end{CORY}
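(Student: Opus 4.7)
The plan is to apply Molien's theorem (the preceding proposition) to both the numerator and denominator of the ratio defining $X_{M,R}(t)$, and then to extract $\chi_U(g)$ by analyzing the leading singular behaviour at $t = \hat\omega$. For $h \in G$ set $f_h(t) := 1/\det(1-t h^{-1})$; this is a class function of $h$, and for each eigenvalue $\mu$ of $h$ with Brauer lift $\hat\mu$, the function $f_h$ has a pole at $t=\hat\mu$ of order equal to the multiplicity of $\mu$ as an eigenvalue of $h$.

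First, I would use Molien's theorem (applied to $U$ and to the trivial module, respectively) to write
\[
\Hilb(M,t) = \frac{1}{|G|}\sum_{h \in G}\chi_U(h)\, f_h(t),
\qquad
\Hilb(R,t) = \frac{1}{|G|}\sum_{h \in G} f_h(t),
\]
so that
\[
X_{M,R}(t) \;=\; \frac{\sum_{h \in G}\chi_U(h)\, f_h(t)}{\sum_{h \in G} f_h(t)}.
\]

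Second, I would analyze this rational function at $t = \hat\omega$. Let $m$ denote the maximum multiplicity of $\omega$ as an eigenvalue of any element of $G$. By the hypothesis on $g$, every $f_h(t)$ has pole order at most $m$ at $t = \hat\omega$, with equality if and only if $h$ lies in the conjugacy class $g^G$. Multiplying numerator and denominator by $(1 - t/\hat\omega)^m$ and passing to the limit $t \to \hat\omega$, every summand indexed by $h \notin g^G$ vanishes. Since both $\chi_U$ and $f_h$ are class functions of $h$, the remaining $|g^G|$ summands in the numerator share the common factor $\chi_U(g)$, which factors out; the surviving sums in numerator and denominator are then identical, yielding $X_{M,R}(\hat\omega) = \chi_U(g)$.

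The main obstacle concerns the validity of Molien's formula for $\Hilb(R,t)$ in positive characteristic, since the version recalled in the preceding proposition requires its coefficient module (here the trivial module $k$) to be projective, a condition that fails whenever $\mathrm{char}(k)$ divides $|G|$. To handle this, I would invoke either the Brauer-character averaging formulation of Molien that sums only over $p$-regular classes, or else appeal to the alternative expression $X_{M,R}(t) = \sum_i (-1)^i \Hilb(\Tor_i^R(M,k),t)$ furnished by Proposition~\ref{rationality-proposition}(\romannumeral\ref{seriesinvertible}) together with Theorem~\ref{maintheorem}, to reach the same rational function; in either case the pole-order comparison at $t = \hat\omega$ is unaffected.
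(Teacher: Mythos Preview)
Your argument is essentially the same as the paper's: apply Molien's formula to both $\Hilb(M,t)$ and $\Hilb(R,t)$, expand in Laurent series at $t=\hat\omega$, and observe that the leading terms of order $(1-t\hat\omega^{-1})^{-m}$ come only from the conjugacy class of $g$, so that the ratio at $t=\hat\omega$ is $\chi_U(g)$. The paper phrases this by writing out the leading Laurent coefficients explicitly (with the class size $c$ and the factor $d=\prod_{j>m}(1-\hat\omega^{-1}\hat\omega_j)$), but the content is identical to your multiply-by-$(1-t/\hat\omega)^m$-and-take-limits step.

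You are in fact more careful than the paper on one point. The paper applies Molien directly to $R=k[V]^G$, i.e.\ to the case $U=k$, without comment; but the version of Molien it has just stated requires the coefficient module to be projective, and the trivial module is projective only when $\mathrm{char}\,k\nmid |G|$. The label ``nonmodular'' on the corollary suggests this is the intended regime, though the stated hypotheses do not say so explicitly. Your observation that this step needs justification in positive characteristic dividing $|G|$ is correct, and your first proposed fix (use the Brauer-character form of Molien summed over $p$-regular classes, for which the same pole-order analysis goes through) is the natural one.
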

\begin{proof}
Let $n:=\dim_k V$, and let $g$ have eigenvalues $\omega_1,\ldots,\omega_n$ on $V$,
with $\omega_1=\cdots=\omega_m=\omega$.  If $c$ denotes the cardinality of the
conjugacy class of $g$ in $G$, and if $d:=\prod_{j=m+1}^n (1-\hat\omega^{-1}\hat\omega_j)$,
then Molien's theorem implies that the Laurent expansions about $t=\hat\omega$ for the Hilbert
series of $M=(U \otimes_k k[V])^G$ and $R=k[V]^G$ begin similarly:
$$
\begin{aligned}
\Hilb(M,t) &= \frac{c \cdot \chi_U(g)}{|G|\cdot d \cdot (1-t\hat\omega^{-1})^m} 
  + O\left( \frac{1}{(1-t\hat\omega^{-1})^{m-1}}\right) \\ 
\Hilb(R,t) &= \frac{c}{|G|\cdot d \cdot (1-t\hat\omega^{-1})^m} 
  + O\left( \frac{1}{(1-t\hat\omega^{-1})^{m-1}}\right).
\end{aligned}
$$
Consequently, their quotient $X_{M,R}(t)$ has no pole at $t=\hat\omega$, 
and its value at $t=\hat\omega$ is $\chi_U(g)$.
\end{proof}

\noindent
We point out two old and one new application of Corollary~
\ref{nonmodular-Molien-corollary}.

\begin{EXAMPLE}
\label{Springer-original-example}
Springer's original theorem \cite[(4.5)]{Springer} 
asserts that the complex character values of a regular element $g$
in a finite complex reflection group always obey equation
\eqref{Springer-implicit-equation}.
In his proof he verified that the hypotheses of
Corollary~\ref{nonmodular-Molien-corollary} hold in this situation,
that is, the conjugacy class of a regular element uniquely 
achieves the maximum eigenvalue multiplicity for any of its eigenvalues corresponding to regular eigenvectors.

This should be compared with the special case of
Theorem~\ref{PolynomialInvariantsTheorem}
in which one forgets the $\Gamma$-action, and looks only at the Brauer
character values
for the $C$-action: it says exactly that, {\it for arbitrary modules} $U$, when
$g$ is a regular element of a group $G$ that has $k[V]^G$ polynomial, 
equation \eqref{Springer-implicit-equation} still holds for the Brauer
character value $\chi_U(g)$.
\end{EXAMPLE}

\begin{EXAMPLE}
Springer \cite[Assertion 2.2(iii)]{Springer} observed that
the conclusion of
Corollary~\ref{nonmodular-Molien-corollary}
applies to every element $g$ in the {\it binary icosahedral group},
the largest of the three sporadic finite subgroups of $SL_2(\CC)$.
Every element $g$
in this group has two eigenvalues $\omega, \omega^{-1}$, and these eigenvalues
happen to determine the conjugacy class of $g$ uniquely.
\end{EXAMPLE}

\begin{EXAMPLE}
\label{simple-group-168}
We show Corollary~\ref{nonmodular-Molien-corollary} applies 
to the simple group $G$ of order $168$, via its realization 
as the index two subgroup $G_{24} \cap SL_3(\CC)$ inside the complex reflection group $G_{24}$ 
from the list of Shephard and Todd \cite{ShephardTodd}; 
see Springer \cite[\S 4.6]{Springer} for details on this
realization. Letting $\alpha,i,\beta$
denote primitive complex roots-of-unity of orders $3,4,7$, respectively,
the six conjugacy classes within $G$ have these eigenvalues, orders of elements, and sizes:
\[
\begin{tabular}{|c|c|c|}\hline
\text{ eigenvalues } & \text{ order } & \text{ size of conjugacy class }
\tablestrut\\ \hline\hline
$(1,1,1)$	& 1 & 1 \tablestrut\\ \hline
$(1,-1,-1)$	& 2 & 21 \tablestrut\\ \hline
$(1,i,-i)$	& 4 & 42 \tablestrut\\ \hline
$(1,\alpha ,\alpha^2)$	& 3 & 56 \tablestrut\\ \hline
$(\beta,\beta^2,\beta^4)$	& 7 & 24 \tablestrut\\ \hline
$(\beta^3,\beta^5,\beta^6)$ & 7 & 24 \tablestrut\\ \hline
\end{tabular}
\]
\centerline{{\bf Table \ref{Molien-section}.1:} Conjugacy classes in the simple group of order $168$}
\def\ConjugacyClassTable{\ref{group-of-order-168-section}.1}

\noindent
Scrutiny of the first column of this table shows that every $g$
in $G$ has at least one eigenvalue
$\omega$ to which Corollary~\ref{nonmodular-Molien-corollary} applies.
Hence every $g$ in $G$ has its complex character values $\chi_U(g)$ satisfying
equation~\eqref{Springer-implicit-equation}.
\end{EXAMPLE}

\subsection{A modular version}
As with Example~\ref{Springer-original-example} and
Theorem~\ref{PolynomialInvariantsTheorem},
one would sometimes like to deduce results like
Corollary~\ref{nonmodular-Molien-corollary} for the Brauer character values
$\chi_U(g)$ of all modules $U$ in positive characteristic, not just the
projective modules. We derive here one such general result from
Theorem~\ref{maintheorem}, and then apply it
to the $3$-modular reduction of the group $G$ of order $168$
considered in Example~\ref{simple-group-168}.

\begin{PROP}
\label{cyclic-only-proposition}
Let $G$ be a finite subgroup of $GL(V)$, and $g$ a regular element of
$G$ with regular eigenvector $v$ and associated eigenvalue $\omega$.
Let $U$ be a $k(G)$-module and denote its Brauer character by
$\chi_U:G \rightarrow \CC$.
Let $\hat\omega \in \CC^\times$ be a Brauer lift of $\omega$.

Assume there exists a graded Noether normalization $R \subset k[V]^G$
having the following properties.\footnote{These hypotheses would hold
if $k[V]^G$ were polynomial and one took $R=k[V]^G$, as in Theorem~\ref{PolynomialInvariantsTheorem},
but we {\it do not} assume this here.}
\begin{enumerate}\advance\itemindent by \parindent
\item[(i)] $G$ acts freely on the fiber $\Phi_v=\phi^{-1}(\phi(v))$ containing
$v$ for the finite map $V \overset{\phi}{\rightarrow} \Spec R$. 
\item[(ii)] For every $G$-orbit $Gw_i$ in $\Phi_v$ that has
$\omega Gw_i=Gw_i$, the unique element
$g_{\omega,w_i} \in G$ for which 
$\omega w_i = g_{\omega,w_i} w_i$ has the same
Brauer character value $\chi_U(g_{\omega,w_i})=\chi_U(g)$.
\item[(iii)] $X_{k[V]^G,R}(\hat\omega) \neq 0$.
\end{enumerate}

Then
$$
\chi_U(g) = X_{M, k[V]^G}(\hat\omega).
$$
where $M=(U \otimes_k k[V])^G$ is considered as a module over $k[V]^G$ and over $R$.
\end{PROP}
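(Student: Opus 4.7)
The plan is to apply Theorem~\ref{maintheorem} with $\Gamma$ trivial and with $C=\langle c\rangle$ the cyclic group in which $c$ acts on $V$ by the scalar $\omega^{-1}$; equivalently $c$ scales $V^*$, and hence each $k[V]_j$, by $\omega$ and $\omega^j$ respectively. Hypothesis~(i) of the proposition supplies the free $G$-action on $\Phi_v$. The two $C$-stability requirements of Theorem~\ref{maintheorem} are easy: $R$ is graded and $C$ acts by degree-wise scalars, so $R$ is automatically $C$-stable; and since $\omega v=gv\in Gv\subseteq \Phi_v$, each homogeneous $f\in R$ satisfies $\omega^{\deg f}f(v)=f(\omega v)=f(v)$, from which $f(\omega u)=\omega^{\deg f}f(u)=\omega^{\deg f}f(v)=f(v)$ for any $u\in\Phi_v$, proving $\omega u\in \Phi_v$.

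Theorem~\ref{maintheorem} then delivers in $\Groth(k(C))$ the identity
\[
\sum_{i\ge 0}(-1)^i \sum_{j\ge 0}\bigl[\Tor_i^R(M,k)_j\bigr]=\bigl[(U\otimes_k A(\Phi_v))^G\bigr],
\]
and the next step is to evaluate Brauer characters at $c$. On the left, since $c$ acts as $\omega^j$ on degree $j$, the value is $\sum_i(-1)^i \Hilb(\Tor_i^R(M,k),\hat\omega)=X_{M,R}(\hat\omega)$, a finite sum because $R$ is polynomial. On the right, Lemma~\ref{add-in-cyclic-action} gives
\[
\chi_{(U\otimes_k A(\Phi_v))^G}(c)=\sum_{i:\,cGw_i=Gw_i}\chi_{B_{(Gw_i)}}(c)\cdot \chi_U(g_{c,w_i}^{-1}).
\]
A direct comparison of the defining identities $cw_i=g_{c,w_i}w_i$ and $\omega w_i=g_{\omega,w_i}w_i$, together with the freeness of the $G$-action on $\Phi_v$, forces $g_{c,w_i}^{-1}=g_{\omega,w_i}$. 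Hypothesis~(ii) then collapses the $\chi_U$-factor to the constant $\chi_U(g)$, yielding
\[
X_{M,R}(\hat\omega)=\chi_U(g)\cdot\sum_{i:\,cGw_i=Gw_i}\chi_{B_{(Gw_i)}}(c).
\]

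The remaining step is to run the same argument with $U$ replaced by the trivial one-dimensional $k(G)$-module: then $M$ becomes $B:=k[V]^G$, $\chi_U(g)$ becomes $1$, and one obtains the companion identity
\[
X_{B,R}(\hat\omega)=\sum_{i:\,cGw_i=Gw_i}\chi_{B_{(Gw_i)}}(c).
\]
Hypothesis~(iii) is precisely the non-vanishing of this quantity, so dividing the two displays and using $X_{M,R}(t)/X_{B,R}(t)=X_{M,B}(t)=X_{M,k[V]^G}(t)$ as rational functions yields $\chi_U(g)=X_{M,k[V]^G}(\hat\omega)$. The main obstacle is the duality bookkeeping between $\omega$ and $c$: because $c$ acts on $V$ as $\omega^{-1}$ rather than $\omega$, one must correctly identify $g_{c,w_i}^{-1}$ with $g_{\omega,w_i}$ before invoking hypothesis~(ii); once this identification is secured, the rest of the argument is essentially formal, amounting to the elimination of the common combinatorial sum $\sum_i \chi_{B_{(Gw_i)}}(c)$ by comparing the $U$-valued and trivial-valued instances of Theorem~\ref{maintheorem}.
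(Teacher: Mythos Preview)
Your argument is correct and follows essentially the same route as the paper's proof: apply the machinery behind Theorem~\ref{maintheorem} (via Lemma~\ref{add-in-cyclic-action}) with $C$ acting by the scalar $\omega^{-1}$ on $V$, take Brauer characters at the generator to obtain $X_{M,R}(\hat\omega)=\chi_U(g)\cdot\sum_i\chi_{B_{(Gw_i)}}$, specialize $U=k$ to identify the sum as $X_{k[V]^G,R}(\hat\omega)$, and then divide using the multiplicativity $X_{M,R}=X_{M,k[V]^G}\cdot X_{k[V]^G,R}$ and hypothesis~(iii). The only point requiring care---the bookkeeping that $g_{c,w_i}^{-1}=g_{\omega,w_i}$ since $c$ acts on $V$ as $\omega^{-1}$---you have handled correctly.
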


Observe that the hypotheses imply $g_{\omega^{-1},w_i}=g_{\omega,w_i}^{-1}$, hence $\chi_U(g_{\omega^{-1},w_i}^{-1})=\chi_U(g)$.

\begin{proof}
First note that Proposition~\ref{rationality-proposition}(iv) implies
$$
X_{M,R}(t) = X_{M,k[V]^G}(t) \cdot X_{k[V]^G,R}(t)
$$
and hence
\begin{equation}
\label{first-cancellation-equation}
X_{M,R}(\hat\omega) = X_{M,k[V]^G}(\hat\omega) \cdot X_{k[V]^G,R}(\hat\omega).
\end{equation}

Consider the cyclic group $C \subset \Aut_{k(G)} V$
whose generator $g$ acts as the scalar $\omega$ on $V$, 
and let $\tau = g^{-1}$
as in the proof of Theorem~\ref{PolynomialInvariantsTheorem}. 
Then $\tau$ scales $V^*$ by $\omega$, and acts on the graded rings and modules
$$
R \subset k[V]^G, \,\, k[V], \,\, M:=(U \otimes k[V])^G
$$ 
by the scalar $\omega^j$ on their
$j^{th}$ homogeneous component. Hypothesis (i) allows us to apply
Lemma \xref{add-in-cyclic-action}.

Therefore 
\begin{equation}
\label{second-cancellation-equation}
\begin{aligned}
X_{M,R}(\hat\omega) &= \sum_{i \geq 0} (-1)^i\chi_{\Tor^R_i(M,k)}(\tau)\\
	&= \chi_{\hat{M} \tensor_{\hat{R}} k_v}(\tau)\\
	& = \sum_{i:\;\omega Gw_i =Gw_i}
		\chi_{B_{Gw_i}}(\tau) \cdot \chi_U(g_{\tau,w_i}^{-1})\\
	&= \chi_U(g) \cdot \sum_{i:\;\omega Gw_i =Gw_i} \chi_{B_{Gw_i}}(\tau) 
\end{aligned}
\end{equation}
where the first equality uses the definition of the scalar action on $R,M$\/;
\linebreak[4]
the second equality is a consequence of Lemma \xref{regular-G-orbits} (iii);
substituting $c^{-1}$ for $\tau$
(to conform to the notations of the proof of Lemma \xref{add-in-cyclic-action})
and remembering $\tau$ scales by $\omega$ on $V^*$
the third equality becomes
equation \eqref{Brauer-character-sum} of that lemma;
and the last equality uses hypothesis (ii)
above and the observation made before the proof,
bearing in mind that $g$ and $\tau$ act inversely
on $v$.

In particular, taking $U=k$ the trivial $k(G)$-module in
\eqref{second-cancellation-equation} gives
\begin{equation}
\label{third-cancellation-equation}
X_{k[V]^G,R}(\hat\omega) = \sum_{i:\;\omega Gw_i =Gw_i} \chi_{B_{Gw_i}}(\tau).
\end{equation}
Putting together equations \eqref{first-cancellation-equation}, 
\eqref{second-cancellation-equation}, and 
\eqref{third-cancellation-equation},
one obtains
\begin{equation}
\label{triple-equality}
X_{M,k[V]^G}(\hat\omega) \cdot X_{k[V]^G,R}(\hat\omega)
	= X_{M,R}(\hat\omega)
=\chi_U(g) \cdot X_{k[V]^G,R}(\hat\omega)
\end{equation}
Since by hypothesis (iii) 
$X_{k[V]^G,R}(\hat\omega)$ is a {\it nonzero} factor of
the extreme left and right terms of this string of equalities,
we may divide by it yielding the desired equality.
\end{proof}

\EXAMPLE
\label{modular-simple-group-168}
We apply Proposition~\ref{cyclic-only-proposition} to
the $3$-modular reduction of the simple group of order $168$
that was considered in
Example 3 of section~\ref{Molien-section}
This application also highlights the flexibility of using various different
Noether normalizations $R$ inside $k[V]^G$.

As preparation, we consider the $3$-modular reduction of
the complex reflection group $G_{24}$ considered by
Shephard and Todd \cite{ShephardTodd}.  Kemper and Malle \cite[\S 6, p. 74]{KemperMalle} describe a
realization of it as a subgroup (which we also denote $G_{24}$) 
of $GL_3(\FF_9)$ having order $336$.  Their realization
is generated by three reflections, each of order two, and each unitary
with respect to the nondegenerate sesquilinear form on $V:=(\FF_9)^3$
defined by $(x,y)=\sum_{i=1}^3 x_i \bar{y}_i$ where $\bar{y}:=y^3$ denotes
the Frobenius automorphism of $\FF_9$.  They also exhibit
explicitly three $G_{24}$-invariant polynomials $f_4, f_6, f_{14}$ of degrees
$4,6,14$ respectively, with the property that their Jacobian determinant
$f_{21}$ is nonvanishing.  Since 
$$
4 \cdot 6 \cdot 14 = 336 = |G_{24}|
$$
a criterion of Kemper \cite[Theorem 3.7.5]{DerksenKemper} implies that 
$$
k[V]^{G_{24}}=k[f_4,f_6,f_{14}]
$$ 
is a polynomial algebra for any extension $k$ of $\FF_9$.  

The subgroup $G:=G_{24} \cap SL_3(\FF_9)$ has index two in $G$, and is
isomorphic to the simple group of order $168$.  Since $G_{24}$ contains
the scalar transformation
$\tau:=-1_{V}$, one has $G_{24}=G \times \langle \tau \rangle$.

To analyze the ring of $G$-invariants, we first analyze
the module of $\det$-relative invariants for $G_{24}$, that is,
$$
k[V]^{G_{24},\det}:=(\det \otimes k[V])^{G_{24}}
$$
where $\det: G_{24} \rightarrow \{\pm 1\}$
denotes the determinant character of $G_{24}$.
Because $G_{24}$ is generated by 
involutive reflections, none of which are transvections,
the Jacobian determinant $f_{21}$ is the product of the linear forms defining
the reflecting hyperplanes for
the $21$ reflections in $G_{24}$;
see Broer \cite{Broer} or Hartmann and Shepler \cite{HartmannShepler}.

One also knows  that $f_{21}$ is a $\det$-relative invariant
for $G$.  Conversely, any $\det$-relative
invariant is divisible by each linear form defining one of the reflecting
hyperplanes, and so must also be divisible by $f_{24}$. Thus 
$$
k[V]^{G_{24},\det} = f_{21} k[V]^{G_{24}}
$$ 
is a free $k[V]^{G_{24}}$-module of rank one.

We claim that this implies the following decomposition of the
$G$-invariant ring
\begin{equation}
\label{odd-even-decomposition}
\begin{aligned}
k[V]^G_{even} &= k[V]^{G_{24}}\\
k[V]^G_{odd} &= k[V]^{G_{24}, \det}.
\end{aligned}
\end{equation}
where $k[V]^G_{even}, k[V]^G_{odd}$ denote the subspaces spanned by
homogeneous $G$-invariants of even, odd degree.
To see this, note that any of the generating involutive reflections
$\sigma$ for $G_{24}$ has $\sigma \tau$ lying in the subgroup $G$.
Hence any homogeneous element of $f$ in $k[V]^G$,
say of of degree $d$, will be fixed by $\sigma\tau$ and thus satisfy
$$
\sigma(f)=\tau(f)=(-1)^d f=
\begin{cases}
f & \text{ if }d\text{ is even},\\
\det(\sigma) f & \text{ if }d\text{ is odd}.
\end{cases}
$$  
Since these reflections $\sigma$ generate $G_{24}$, the decomposition in
\eqref{odd-even-decomposition} follows.
Consequently, 
$$
\begin{aligned}
k[V]^G & = k[f_4,f_6,f_{14}] \,\, \oplus \,\, f_{21}\cdot k[f_4,f_6,f_{14}],
	\text{ and } \\
\Hilb(k[V]^G,t) & = \frac{1+t^{21}}{(1-t^4)(1-t^6)(1-t^{14})}.
\end{aligned}
$$
We wish to use this information to deduce the analogue of
Example 1 in section \ref{Molien-section}
for all $3$-modular Brauer characters of $G$.
Assume that the extension $k$ of $\FF_9$
is algebraically closed.  The conjugacy classes of $G$ are described
just as in Table \ref{Molien-section}.1, except that in this case one must 
\begin{enumerate}
\item[$\bullet$]
re-interpret $i, \beta$ as roots-of-unity of orders $4, 7$ in $k^\times$,
\item[$\bullet$]
re-interpret the cube root-of-unity $\alpha$ as $1$,
since the conjugacy class of elements of order $3$ are not $3$-regular and act unipotently.
\end{enumerate}

\begin{PROP}
Let $G \subset SL_3(\FF_9)$ be the $3$-modular reduction of the
simple group of order $168$.  Let $k$ be an algebraically closed extension 
of $\FF_9$, and for any $3$-regular element $g$ of $G$, let
$\omega$ in $k^\times$ be any eigenvalue for $g$ having the same multiplicative order as $g$. 

Then for any $k(G)$-module $U$ one has the Brauer character value
$$
\chi_U(g)=X_{M , k[V]^G}(\hat\omega)
$$
where $M:=(U \otimes k[V])^G$, and $\hat\omega$ is the Brauer lift of $\omega$.
\end{PROP}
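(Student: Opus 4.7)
The strategy is to apply Proposition~\ref{cyclic-only-proposition} to each $3$-regular conjugacy class of $G$ (orders $1, 2, 4, 7$), choosing for each an appropriate $C$-stable polynomial Noether normalization $R \subset k[V]^G$. Hypothesis (ii) of that proposition holds uniformly across all cases: the element $g_{\omega, w_i}$ defined by $\omega w_i = g_{\omega, w_i} w_i$ has $\omega$ as an eigenvalue on $w_i$, so its order is a multiple of the order of $\omega$, which equals the order of $g$; inspection of the $3$-regular classes (obtained from the conjugacy class table by discarding the order-$3$ unipotent class in characteristic~$3$) shows that each is uniquely characterized among $3$-regular classes by admitting an eigenvalue of a given order, forcing $g_{\omega, w_i}$ to be conjugate to $g$.

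\textbf{Orders $1, 4, 7$.} Take $R := k[V]^{G_{24}} = k[f_4, f_6, f_{14}]$. The decomposition $k[V]^G = R \oplus f_{21} R$ yields $X_{k[V]^G, R}(t) = 1 + t^{21}$, which evaluates at $\hat\omega$ to $2$ when $\omega = 1$ or $\omega$ is a primitive $7$th root of unity (since $\hat\omega^{21} = 1$), and to $1 + \hat{\imath}$ when $\omega = i$; all are nonzero in characteristic~$3$, verifying hypothesis (iii). For $\omega \neq 1$, $G$-invariance of each generator $f \in \{f_4, f_6, f_{14}\}$ combined with $gv = \omega v$ forces $(\omega^{\deg f} - 1) f(v) = 0$, so $f(v) = 0$ whenever $\omega^{\deg f} \neq 1$; this gives $f(\tau v) = \omega^{-\deg f} f(v) = f(v)$ for $\tau = g^{-1}$, and so $\Phi_v$ is $C$-stable. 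Freeness of the $G$-action on $\Phi_v$ follows from regularity of $v$.

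\textbf{Order $2$, the main obstacle.} Here $\omega = -1$ and $X_{k[V]^G, R}(-1) = 1 + (-1)^{21} = 0$ for the previous choice of $R$; indeed, any polynomial Noether normalization contained in $k[V]^{G_{24}}$ inherits the factor $(1+t^{21})$ in $X_{k[V]^G, R}(t)$ and so fails hypothesis (iii). We take instead $R' := k[f_4, f_{14}, f_{21}]$. The main technical step is verifying that $R'$ is a polynomial Noether normalization of $k[V]^G$: algebraic independence of $f_4, f_{14}, f_{21}$ is immediate from the decomposition $k[V]^G = k[V]^{G_{24}} \oplus f_{21}\, k[V]^{G_{24}}$ (a relation, after collecting powers of $f_{21}$ via $f_{21}^2 \in k[V]^{G_{24}}$, would force one on $f_4, f_{14}$ in the polynomial ring $k[V]^{G_{24}}$), while the hsop condition $V(f_4, f_{14}, f_{21}) = \{0\}$ in $V$ reduces to checking that no line of the curve $V(f_4, f_{14}) \subset V$ lies in a reflecting hyperplane of $G_{24}$, a direct verification with the explicit Kemper--Malle generators. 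A short Hilbert series computation then gives
\[
X_{k[V]^G, R'}(t) = \frac{(1+t^{21})(1-t^{21})}{1-t^6} = 1 + t^6 + t^{12} + t^{18} + t^{24} + t^{30} + t^{36},
\]
so $X_{R'}(-1) = 7 \equiv 1 \not\equiv 0 \pmod{3}$, giving hypothesis (iii).

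The decisive observation making $R'$ viable is that $G$-invariance of $f_{21}$ combined with $gv = -v$ forces $f_{21}(v) = f_{21}(gv) = f_{21}(-v) = -f_{21}(v)$, hence $f_{21}(v) = 0$ in characteristic~$3$; consequently $f_{21}(\tau v) = -f_{21}(v) = 0 = f_{21}(v)$ for $\tau v = -v$, and $\Phi_v$ is $C$-stable (the other two generators being of even degree). Hypothesis (i) holds for a generic $v$ in the $2$-dimensional $(-1)$-eigenspace of $g$. Proposition~\ref{cyclic-only-proposition} applied with $R'$ then yields the conclusion $\chi_U(g) = X_{M, k[V]^G}(\hat\omega)$, completing the proof in all cases.
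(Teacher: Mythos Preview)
Your overall architecture matches the paper's: apply Proposition~\ref{cyclic-only-proposition} case by case on the order of $g$, using $R=k[V]^{G_{24}}=k[f_4,f_6,f_{14}]$ for orders $1,4,7$ and a different normalization containing $f_{21}$ for order $2$. However, there is one genuine gap and one structural divergence worth noting.

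\textbf{The gap: hypothesis (i) for orders $4$ and $7$.} You write that ``freeness of the $G$-action on $\Phi_v$ follows from regularity of $v$'', but you never establish that $v$ is regular. For these orders the $\omega$-eigenspace is one-dimensional, so $v$ is forced up to scalar and there is no genericity to invoke; regularity must be proved. This is where the bulk of the paper's work lies. For order $7$ the paper observes $f_4(v)=f_6(v)=0$, then argues that non-freeness of the $G_{24}$-action would (by Serre's theorem on reflection groups) place $v$ on a reflecting hyperplane, forcing $f_{21}(v)=0$ as well and contradicting $V(f_4,f_6,f_{21})=\{0\}$. For order $4$ the paper gives a direct stabilizer analysis, ruling out elements of $G_v$ of orders $7$, $2$, and $3$ in turn. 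Without these arguments hypothesis (i) is simply unverified.

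\textbf{The divergence: your $R'$ for order $2$.} You take $R'=k[f_4,f_{14},f_{21}]$; the paper takes $R=k[f_4,f_6,f_{21}]$. The paper's choice admits a clean hsop proof via Springer's trick: scaling a common zero by a primitive $14^{\text{th}}$ root of unity would produce an element of $G$ of order divisible by $14$. Your choice would want scaling by a primitive $6^{\text{th}}$ (or $3^{\text{rd}}$) root of unity, but no such thing exists in characteristic $3$, which is presumably why you fall back on ``direct verification with the explicit Kemper--Malle generators''. That may well succeed, but it is a computation rather than an argument, and you have not carried it out. The paper's $R$ also gives the simpler value $X_{k[V]^G,R}(-1)=1+1+1=3\neq 0$.

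Two smaller points. Your hypothesis-(ii) argument says each class is ``characterized by admitting an eigenvalue of a given \emph{order}''; for order $7$ there are two classes, distinguished only by the specific eigenvalue $\omega\in\{\beta,\beta^2,\beta^4\}$ versus $\{\beta^3,\beta^5,\beta^6\}$, so the correct statement is that $g_{\omega,w_i}$ shares the actual eigenvalue $\omega$ with $g$. And the values $X_{k[V]^G,R}(\hat\omega)$ live in $\CC$, not in characteristic $3$; your phrases ``nonzero in characteristic $3$'' and ``$7\equiv 1\pmod 3$'' are misplaced, though the intended nonvanishing is correct.
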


\begin{proof}
We wish to apply Proposition~\ref{cyclic-only-proposition} with $v$ chosen
generically from the $\omega$-eigenspace for $g$. We will use two different
Noether normalizations $R$ inside $k[V]^G$, depending upon the order of
the $3$-regular element $g$. 

For elements $g$ of order $2$, so that $\omega=-1$, we use
the subring
$$
R=k[f_4,f_6,f_{21}]
$$
which we claim is a Noether normalization.
To check this claim, one must show that the only solution to the system
\begin{equation}
\label{hsop-system}
f_4(v)=f_6(v)=f_{21}(v)=0
\end{equation}
is the vector $v=0$.  For this, we borrow the idea of Springer \cite[3.2(i)]{Springer}.  Given
any solution $v$ to \eqref{hsop-system}, 
scaling $v$ by a $14^{th}$ root-of-unity $\xi$ in $k^\times$ gives a vector
$\xi v$ with the property that $f(\xi v)=f(v)$ for $f=f_4,f_6,f_{21}$, as well as for $f=f_{14}$.  
Hence $f(\xi v)=f(v)$ for every $f$ in $k[V]^G$, so that $\xi v$ and $v$ must
represent the same point in the quotient space $V/G$.  Thus $g(v)=\xi v$ for some $g$ in $G$.  
But if $v \neq 0$, then such an element $g$ would have order divisible by $14$, 
and there are no such elements of $G$.  Hence $v=0$.  

For this choice of $R$, one has
$$
X_{k[V]^G,R}(t)= 1+t^{14}+t^{28}
$$
so that $X_{k[V]^G,R}(-1)=3 \neq 0$.  Hence 
hypothesis (iii) of Proposition~\ref{cyclic-only-proposition} will be
satisfied. We claim that hypothesis (i) of Proposition~\ref{cyclic-only-proposition} will be
satisfied as long as $v$ is chosen generically from the $2$-dimensional
$(-1)$-eigenspace for $g$ inside $V$.  To see this, note that  $\Phi(v)=\phi^{-1}(\phi(v))$ 
will carry a free $G$-action as long as $v$ avoids the union
$$
Y:=\bigcup_{\substack{h \in G:\\ h \neq 1}} \phi^{-1}(\phi(V^h)).
$$
For each $h \neq 1$ in $G$, the
fixed subspace $V^h$ is of dimension at most $1$.  
Since $\phi: V \rightarrow \Spec R$ is a finite morphism, each set $\phi^{-1}(\phi(V^h))$
is contained in a $1$-dimensional algebraic subset of $V=k^3$.
Since the union $Y$ is finite, 
$Y$ is also contained in a $1$-dimensional algebraic set.
Thus the vector $v$ chosen generically inside the $2$-dimensional $(-1)$-eigenspace will
indeed avoid $Y$, since $k$ is algebraically closed (and hence infinite).
Lastly, hypothesis (ii) of Proposition~\ref{cyclic-only-proposition} will 
be satisfied: all elements $g_{\omega,w_i}$ for
the various orbits $Gw_i$ in $\Phi_v$ have the same order $2$ as $\omega$,
and all elements of $G$ of order $2$ are conjugate, so they are all conjugate to $g$.

For elements $g$ of order $1,4,$ or $7$, we use the Noether normalization
$$
\begin{aligned}
R=k[V]^{G_{24}}&=k[f_4,f_6,f_{14}]\\
X_{k[V]^G,R}(t)&= 1+t^{21}.
\end{aligned}
$$
Note that $X_{k[V]^G,R}(\hat\omega) \neq 0$ for such elements $g$,
so hypothesis (iii) of Proposition~\ref{cyclic-only-proposition} will be
satisfied.  

Since $G_{24}=G \times \langle \tau \rangle$,
the fiber $\Phi_v$ decomposes into the two $G$-orbits 
$$
\Phi_v= G_{24}v = Gv \sqcup -Gv.
$$
To show that $G$ acts freely on $\Phi_v$, and thereby satisfies hypothesis 
(i) of Proposition~\ref{cyclic-only-proposition}, we consider three cases
depending upon the order of $g$.  

If $g$ has order $1$, since $v$ is chosen generically within all of $V$,
there is no problem.  

If $g$ has order $7$, note that $f_4(v)=f_6(v)=0$
since any $f$ in $k[V]^G$ which is homogeneous of degree $d$ will have
$$
f(v)=f(g(v))=f(\omega v)=\omega^d f(v).
$$
If in addition, $\Phi_v=G_{24}v$ does not carry a free $G$-action, then it carries
a non-free $G_{24}$-action, and a theorem of Serre~\cite{Serre-poly-invariants} implies that 
$v$ lies on some reflecting hyperplane for $G_{24}$, that is,
$f_{21}(v)=0$.  Hence $v$ is a solution to equation \eqref{hsop-system}, so $v=0$,
a contradiction.

If $g$ has order $4$, we argue directly why the isotropy
subgroup 
$$
G_v:=\{h\in G: h(v)=v\}
$$ 
must be trivial.  Firstly, there can be no elements
$h$ in $G_v$ order $7$, as such elements in $G$ have no eigenvalues
equal to $1$.  Secondly, if $G_v$ contained an element of {\it even} order, then without
loss of generality it would contain some $h$ of order $2$.  Unitarity of $h$ would
imply that $h$ acts on the $2$-dimensional perpendicular space $v^\perp$ as the scalar
$-1$, while unitarity of $g$ implies that $g$ acts on $v^\perp$ with its other eigenvalues $1, -\omega$.  
Thus $g$ and $h$ would commute, and $gh$ would have eigenvalues $\omega$ (on $v$) and 
$-1, \omega$ (on $v^\perp$).  As there are no elements of $G$ with these eigenvalues, 
$G_v$ can contain no elements of even order.  Lastly, this forces $G_v$ to be a $3$-torsion
group, and since $|G|$ is not divisible by $9$, either $G_v$ is trivial or $G_v=\{1,h,h^{-1}\}$ 
for some $h$ of order $3$.  But the latter cannot occur:  when
$\langle g \rangle=\{1,g,g^2,g^3\}$  acts on $G_v$ by conjugation, one finds that
$ghg^{-1}=h^{\pm 1}$, so that $g^2$ commutes with $h$ and $g^2h$ would be an element of order
$6$ in $G$.

Finally, note that since $\Phi_v$ contains only the two $G$-orbits
$Gv, -Gv$, one has $g_{\omega, -v}=g_{\omega,v}=g$, and hence hypothesis (ii)
of Proposition \ref{cyclic-only-proposition} is always satisfied.
\end{proof}


\begin{thebibliography}{99}

\bibitem{AtiyahMacdonald}
M.F. Atiyah and I.G. Macdonald, 
Introduction to commutative algebra. 
Addison-Wesley Publishing Co., Reading, Mass.-London-Don Mills, Ont. 1969

\bibitem{AuslanderReitenSmalo}
M.~Auslander, I.~Reiten and S.~Smal\o,
Representation Theory of Artin Algebras.
{\it Cambridge studies in advanced mathematics} {\bf 36}.
Cambridge Univ. Press, 1995.

\bibitem{BensonRepBook}
D.J. Benson,
Representations and cohomology I.
{\it Cambridge Studies in Advanced Mathematics} {\bf 30},
Cambridge Univ. Press, 1991.

\bibitem{Benson}
D.J. Benson,
Polynomial invariants of finite groups.
{\it London Math. Society Lecture Notes} {\bf 190},
Cambridge Univ. Press, 1993.

\bibitem{BorhoKraft}
W. Borho and H. Kraft.
\"Uber Bahnen und deren Deformationen bei linearen Aktionen reduktiver Gruppen.
{\it Comment. Math. Helvetici} {\bf 54} (1979), 61-104.

\bibitem{Broer}
A. Broer, 
The direct summand property in modular invariant theory. 
{\it Transform. Groups} {\bf 10} (2005), 5--27. 

\bibitem{CE} H. Cartan and S. Eilenberg,
Homological Algebra,
{\it Princeton Univ. Press,\/} Princeton, 1956.

\bibitem{Chevalley}
C.~Chevalley,
Invariants of finite groups generated by reflections.
{\it Amer. J. Math.} {\bf 77} (1955), 778--782.

\bibitem{DerksenKemper}
H. Derksen and G. Kemper, 
Computational invariant theory,
Invariant Theory and Algebraic Transformation Groups, I. 
{\it Encyclopaedia of Mathematical Sciences} {\bf 130}. 
Springer-Verlag, Berlin, 2002. 

\bibitem{Eisenbud}
D. Eisenbud,
Commutative algebra with a view toward algebraic geometry. 
{\it Graduate Texts in Mathematics} {\bf 150}. 
Springer-Verlag, New York, 1995. 

\bibitem{EisenbudCI}
D. Eisenbud,
Homological algebra on a complete intersection,
with an application to group representations.
{\it Trans. Amer. Math. Soc.} {\bf 260} (1980), 35--64.

\bibitem{GeckMalle}
M. Geck and G. Malle,
Reflection Groups. A Contribution to the Handbook of Algebra,
available as {\tt arXiv:math/0311012}

\bibitem{Gruenberg}
K.W. Gruenberg, Cohomological topics in group theory, {\it Lecture Notes in Math.} {\bf 143}, Springer-Verlag, Berlin-Heidelberg, 1970.

\bibitem{HartmannShepler}
J. Hartmann and A. V. Shepler, 
Jacobians of reflection groups.
{\it Trans. Amer. Math. Soc.} {\bf 360} (2008),  123--133

\bibitem{Kane}
R. Kane, 
Reflection groups and invariant theory. 
{\it CMS Books in Mathematics/ Ouvrages de Math\'ematiques de la SMC} {\bf 5}. 
Springer-Verlag, New York, 2001. 

\bibitem{KemperMalle}
G. Kemper and  G. Malle, 
The finite irreducible linear groups with polynomial ring of invariants. 
{\it Transform. Groups} {\bf 2} (1997), 57--89. 
 
\bibitem{Kostant}
B. Kostant,
Lie group representations on polynomial rings.
{\it Amer. J. Math} {\bf 85} (1963), 327--404. 

\bibitem{Lang}
S. Lang, 
Algebra,
{\it Graduate Texts in Mathematics} {\bf 211}. 
Springer-Verlag, New York, 2002.

\bibitem{Matsumura}
H. Matsumura,
Commutative ring theory,
{\it Cambridge Studies in Advanced Mathematics} {\bf 8}, 
Cambridge University Press, Cambridge, 1986.

\bibitem{Mitchell}
S.A. Mitchell,
Finite complexes with $A(n)$-free cohomology.
{\it Topology} {\bf 24} (1985), 227--246.

\bibitem{Mumford}
D. Mumford,
{\em Abelian varieties,}
{\it Tata Institute of Fundamental Research Studies in Mathematics} {\bf 5}.
Published for he Tata Institute of Fundamental Research, Bombay; 
Oxford University Press, London 1970
 
\bibitem{OrlikTerao}
P. Orlik and H. Terao, Arrangements of hyperplanes. 
{\it Grundlehren der Mathematischen Wissenschaften} {\bf 300}.
Springer-Verlag, Berlin, 1992.

\bibitem{RSW}
V. Reiner, D. Stanton and P. Webb,
Springer's regular elements over arbitrary fields,
{\it Math. Proc. Cambridge Philos. Soc.} {\bf 141} (2006), no. 2, 209--229. 

\bibitem{RStantonWhite}
V. Reiner, D. Stanton and D. White,
The cyclic sieving phenomenon,
{\it J. Combin. Theory, Ser. A} {\bf 108} (2004), 17--50.

\bibitem{Se} J.-P. Serre,
Alg\`ebre locale---multiplicit\'es,
{\it Lec. Notes in Math.} {\bf 11}, Springer Verlag, Heidelberg 1965.

\bibitem{Sefive} J. -P. Serre,
Repr\'esentations lin\'eaires des  groupes finis,
{\it Hermann,\/} Paris, 1967.

\bibitem{Serre-poly-invariants}
J.-P.  Serre,
Groupes finis d'automorphismes d'anneaux locaux r\'eguliers. (French) 
1968 Colloque d'Alg\`ebre (Paris, 1967), Exp. 8 11 pp. Secr\'etariat math\'ematique, Paris.

\bibitem{ShephardTodd}
G.C.~Shephard and J.A.~Todd,
Finite unitary reflection groups.
{\it Canadian J. Math.} {\bf 6} (1954), 274--304.

\bibitem{Smith}
L. Smith,
Polynomial invariants of finite groups.
{\it Research Notes in Mathematics} {\bf 6},
A.K. Peters, Ltd., Wellesley, MA, 1995. 
  
\bibitem{LSsteintwo} 
L. Smith,
A Modular Analog of a Theorem of R. Steinberg, 
{\it Glasgow J. of Math} {\bf 45} (2003), 69--71.

\bibitem{smoke} W. Smoke,
Dimension and Multiplicity for Graded Algebras,
{\it J. of Algebra} {\bf 21} (1972), 149--173.

\bibitem{So} L. Solomon,
Invariants of Finite Reflection Groups,
{\it Nagoya Math. J.} {\bf 22} (1963), 57--64.


\bibitem{Springer}
T.A. Springer,
Regular elements of finite reflection groups,
{\it Inventiones math.} {\bf 25} (1974), 159--198.  
 
\bibitem{SpringerIT}
T.A. Springer,
Invariant theory,
{\it Lecture Notes in Mathematics} {\bf 585}, 
Springer-Verlag, Berlin-New York, 1977.

\bibitem{Springer-binary}
T.A. Springer, 
Some remarks on characters of binary polyhedral groups. 
{\it J. Algebra} {\bf 131} (1990), 641--647. 

\bibitem{tate}
J. Tate,
Homology of Local Rings,
{\it Ill. J. of Math.} {\bf 1} (1957) 14--27.

\end{thebibliography}
\end{document}